\newcommand{\wTilde}{\widetilde}
\numberwithin{equation}{section}
\theoremstyle{plain}
\newtheorem{theorem}{Theorem}[section]
\newtheorem{lemma}[theorem]{Lemma}
\newtheorem{corollary}[theorem]{Corollary}
\newtheorem{proposition}[theorem]{Proposition}
\begin{document}

\makeatletter
\def\imod#1{\allowbreak\mkern10mu({\operator@font mod}\,\,#1)}
\makeatother

\author{Alexander Berkovich}
   \address{Department of Mathematics, University of Florida, 358 Little Hall, Gainesville FL 32611, USA}
   \email{alexb@ufl.edu}

\author{Ali Kemal Uncu}
   \address{Department of Mathematics, University of Florida, 358 Little Hall, Gainesville FL 32611, USA}
   \email{akuncu@ufl.edu}

\title[\scalebox{.9}{On partitions with fixed number of even-indexed and odd-indexed odd parts}]{On partitions with fixed number of even-indexed and odd-indexed odd parts}
     
\begin{abstract} 
This article is an extensive study of partitions with fixed number of odd and even-indexed odd parts. We use these partitions to generalize recent results of C. Savage and A. Sills. Moreover, we derive explicit formulas for generating functions for partitions with bounds on the largest part, the number of parts and with a fixed value of BG-rank or with a fixed value of alternating sum of parts. We extend the work of C. Boulet, and as a result, obtain a four-variable generalization of Gaussian binomial coefficients. In addition, we provide combinatorial interpretation of the Berkovich--Warnaar identity for Rogers--Szeg\H{o} polynomials.
\end{abstract}   
   
\keywords{Partitions with parity restrictions and bounds; partition identities; $q$-series;  BG-rank; alternating sum of parts; Rogers--Szeg\H{o} polynomials; (little) G\"ollnitz identity}

 \subjclass[2010]{05A15, 05A17, 05A19, 11B34, 11B37, 11B75, 11P81, 11P83, 33D15}

\date{\today}
   
\maketitle

\section{Introduction and Notation}\label{section1}

A partition $\pi$ is a non-increasing finite sequence $\pi = (\lambda_1,\lambda_2,\dots)$ of positive integers. The elements $\lambda_i$ that appear in the sequence $\pi$ are called parts of $\pi$. For positive integers $i$, we call $\lambda_{2i-1}$ odd-indexed parts, and $\lambda_{2i}$ even indexed parts of $\pi$. We say $\pi$ is a partition of $n$, if the sum of all parts of $\pi$ is equal to $n$. Conventionally the empty sequence is considered as the unique partition of zero. We will abide by this convention. Partitions can be represented graphically in multiple ways. For consistency, we are going to focus on representing partitions using Ferrers diagrams. The $2$-residue Ferrers diagram of partition $\pi$ is given by taking the ordinary Ferrers diagram drawn with boxes instead of dots and filling these boxes using alternating $0$'s and $1$'s starting from $0$ on odd-indexed parts and $1$ on even -indexed parts. We can exemplify 2-residue diagrams with $\pi=(12,10,7,5,2)$ in Table~\ref{table:TBL1}.

\begin{center}
\begin{table}[htb]\label{table:TBL1}\caption{2-residue Ferrers diagram of the partition $\pi=(12,10,7,5,2)$}
\begin{tikzpicture}[line cap=round,line join=round,x=0.5cm,y=0.5cm]
\clip(0.5,0.5) rectangle (14.5,6.5);
\draw [line width=.25pt] (2,1)-- (4,1);
\draw [line width=.25pt] (2,2)-- (7,2);
\draw [line width=.25pt] (2,3)-- (9,3);
\draw [line width=.25pt] (2,4)-- (12,4);
\draw [line width=.25pt] (14,5)-- (2,5);
\draw [line width=.25pt] (2,6)-- (14,6);
\draw [line width=.25pt] (14,6)-- (14,5);
\draw [line width=.25pt] (13,6)-- (13,5);
\draw [line width=.25pt] (12,6)-- (12,4);
\draw [line width=.25pt] (11,6)-- (11,4);
\draw [line width=.25pt] (10,6)-- (10,4);
\draw [line width=.25pt] (9,6)-- (9,3);
\draw [line width=.25pt] (8,6)-- (8,3);
\draw [line width=.25pt] (7,2)-- (7,6);
\draw [line width=.25pt] (6,6)-- (6,2);
\draw [line width=.25pt] (5,6)-- (5,2);
\draw [line width=.25pt] (4,1)-- (4,6);
\draw [line width=.25pt] (3,6)-- (3,1);
\draw [line width=.25pt] (2,6)-- (2,1);
\draw (2.5,5.5) node[anchor=center] {0};
\draw (3.5,5.5) node[anchor=center] {1};
\draw (4.5,5.5) node[anchor=center] {0};
\draw (5.5,5.5) node[anchor=center] {1};
\draw (6.5,5.5) node[anchor=center] {0};
\draw (7.5,5.5) node[anchor=center] {1};
\draw (8.5,5.5) node[anchor=center] {0};
\draw (9.5,5.5) node[anchor=center] {1};
\draw (10.5,5.5) node[anchor=center] {0};
\draw (11.5,5.5) node[anchor=center] {1};
\draw (12.5,5.5) node[anchor=center] {0};
\draw (13.5,5.5) node[anchor=center] {1};
\draw (2.5,4.5) node[anchor=center] {1};
\draw (3.5,4.5) node[anchor=center] {0};
\draw (4.5,4.5) node[anchor=center] {1};
\draw (5.5,4.5) node[anchor=center] {0};
\draw (6.5,4.5) node[anchor=center] {1};
\draw (7.5,4.5) node[anchor=center] {0};
\draw (8.5,4.5) node[anchor=center] {1};
\draw (9.5,4.5) node[anchor=center] {0};
\draw (10.5,4.5) node[anchor=center] {1};
\draw (11.5,4.5) node[anchor=center] {0};
\draw (2.5,3.5) node[anchor=center] {0};
\draw (3.5,3.5) node[anchor=center] {1};
\draw (4.5,3.5) node[anchor=center] {0};
\draw (5.5,3.5) node[anchor=center] {1};
\draw (6.5,3.5) node[anchor=center] {0};
\draw (7.5,3.5) node[anchor=center] {1};
\draw (8.5,3.5) node[anchor=center] {0};
\draw (2.5,2.5) node[anchor=center] {1};
\draw (3.5,2.5) node[anchor=center] {0};
\draw (4.5,2.5) node[anchor=center] {1};
\draw (5.5,2.5) node[anchor=center] {0};
\draw (6.5,2.5) node[anchor=center] {1};
\draw (2.5,1.5) node[anchor=center] {0};
\draw (3.5,1.5) node[anchor=center] {1};
\end{tikzpicture}
\end{table}
\end{center}

In this paper, we consider partitions with fixed number of odd and even-indexed odd parts. We start our discussion by considering partitions into distinct parts. In this way we are led to Theorem~\ref{BasicCorollaryComb}.

\begin{theorem}\label{BasicCorollaryComb} For non-negative integers $i$, $j$, and $n$
\[p(i,j,n)=p'(i,j,n),\]
where $p(i,j,n)$ is the number of partitions of $n$ into distinct parts with $i$ odd-indexed odd parts and $j$ even-indexed odd parts and $p'(i,j,n)$ is the number of partitions of $n$ into distinct parts with $i$ parts that are congruent to 1 modulo 4, and $j$ parts that are congruent to 3 modulo 4.
\end{theorem}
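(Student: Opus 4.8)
The plan is to pass to generating functions and to show that the two counts have identical generating series. Writing
\[
P(a,b,q)=\sum_{i,j,n\ge 0} p(i,j,n)\,a^i b^j q^n,
\qquad
P'(a,b,q)=\sum_{i,j,n\ge 0} p'(i,j,n)\,a^i b^j q^n,
\]
the series $P'$ is immediate: since the objects are partitions into distinct parts and a part contributes $a$ exactly when it is $\equiv 1\pmod 4$, contributes $b$ exactly when it is $\equiv 3\pmod 4$, and is unweighted when it is even, one reads off
\[
P'(a,b,q)=\prod_{k\ge 1}\bigl(1+q^{2k}\bigr)\prod_{k\ge 0}\bigl(1+aq^{4k+1}\bigr)\bigl(1+bq^{4k+3}\bigr).
\]
So the theorem is equivalent to the single product evaluation $P(a,b,q)=P'(a,b,q)$, and the whole content is to compute $P$.

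For $P$ I would work directly from the $2$-residue Ferrers diagram. A short case check on the last cell of a row shows that an odd-indexed odd part is exactly a row whose final cell has residue $0$, while an even-indexed odd part is exactly a row whose final cell has residue $1$; equivalently, if one lists the parts in decreasing order $\lambda_1>\lambda_2>\dots>\lambda_m$ and inserts them one at a time, then the part placed $k$-th sits at index $k$, so its contribution is $a$ when $k$ is odd and $\lambda_k$ is odd, $b$ when $k$ is even and $\lambda_k$ is odd, and $1$ when $\lambda_k$ is even. I would encode this as a two-state transfer over the value axis $v=1,2,3,\dots$, the state being the parity of the number of parts used so far (this parity is what decides whether the next part to be inserted is odd- or even-indexed). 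Inserting a value flips the state and emits the appropriate power of $a$, $b$, or a $q$-only weight; summing the transfer and collecting the boundary contributions should telescope into a product. Because the state has period $2$ in the index while the emitted $a/b$ depends on the value parity, which also has period $2$, the two periods combine into the period $4$ that produces the residue classes $4k+1$ and $4k+3$ in the final product.

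The main obstacle is precisely this coupling between a part's \emph{index} parity and its \emph{value} parity: the two statistics are not equal partition-by-partition---for instance the single part $7$ is odd-indexed but $\equiv 3\pmod 4$---so no weight-preserving identity map exists, and the even parts must genuinely be allowed to move (already $P=P'$ forces a pairing such as $7\leftrightarrow 6+1$). Consequently the identity can only emerge after summing over all interleavings of odd and even parts, and the real work is to verify that the transfer sum collapses to $\prod_{k\ge1}(1+q^{2k})\prod_{k\ge0}(1+aq^{4k+1})(1+bq^{4k+3})$. An appealing alternative that avoids the transfer bookkeeping would be an explicit bijection on distinct partitions sending the number of odd-indexed odd parts to the number of parts $\equiv1\pmod4$ and the number of even-indexed odd parts to the number of parts $\equiv3\pmod4$ while fixing $n$; such a map must mix the odd and even parts, and constructing it transparently---rather than merely matching the two series---would be the most delicate point.
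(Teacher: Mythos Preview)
Your overall strategy---pass to generating functions, compute $P'$ as an explicit product, and reduce the problem to evaluating $P$---is exactly the paper's. Your two-state transfer (state = parity of the number of parts placed so far, processed from the largest value down) is, unwound, precisely the recurrence the paper writes as Lemma~\ref{RecRels}: removing the largest part flips odd-indexed and even-indexed, and if that part is odd it also decrements the count of odd-indexed odd parts. So up to this point you and the paper coincide.

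The gap is in the sentence ``summing the transfer and collecting the boundary contributions should telescope into a product.'' It does not telescope in the way you suggest. You already observed that the finite truncations of $P$ and $P'$ disagree (your example $(7)$ versus $(6,1)$ is exactly this), so the partial products of your transfer cannot equal the partial products of $\prod(1+q^{2k})\prod(1+aq^{4k+1})(1+bq^{4k+3})$ at any finite stage. Concretely, the bounded generating function $P_{2N}(i,j,q)$ is \emph{not} a clean product: the paper shows it equals
\[
q^{2i^2-i+2j^2+j}\,(-q^2;q^2)_{N-i-j}\,\genfrac{[}{]}{0pt}{}{N}{i,\ j}_{q^4},
\]
which involves a $q$-trinomial and only becomes the desired product after $N\to\infty$. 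The paper's actual work is to \emph{guess} this closed form and verify it satisfies the recurrence (your transfer), then take the limit. Your proposal stops just before this step; to complete it you would need either to discover that closed form for the partial transfer, or to find a genuinely different summation of the infinite transfer product---neither of which is indicated in the outline.
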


\noindent
We can demonstrate Theorem~\ref{BasicCorollaryComb} for special choices $i=j=1$, and $n=14$ in Table~\ref{table:TBL2}.

\begin{center}
\begin{table}[htb] \caption{$p(1,1,14)$ and $p'(1,1,14)$ with respective partitions for Theorem~\ref{BasicCorollaryComb}}\begin{tabular}{cc}
$p(1,1,14)=10$ : & $\begin{array}{c}
(13,1),\ (11,3),\ (10,3,1),\ (9,5),\ (9,3,2),\ (8,5,1),  \vspace*{.1cm}\\
(7,5,2),\ (7,4,2,1),\ (6,5,3),\ (6,4,3,1),\ .\vspace*{.1cm}
\end{array}$ \\ 
\\ [-1.5ex]
$p'(1,1,14)=10$ : & $\begin{array}{c}\vspace*{.1cm}(11,2,1),\ (10,3,1),\ (9,3,2),\  (8,3,2,1),\ (7,6,1),\\ \vspace*{.1cm}   (7,5,2),\ (7,4,2,1),\ (6,5,3),\ (6,4,3,1),\ (5,4,3,2). \vspace*{.1cm}\end{array}$ 
\end{tabular}\label{table:TBL2}  
\end{table}
\end{center}

Let $P(i,j,q)$ be the generating function for $p(i,j,n)$: \[P(i,j,q) = \sum_{n\geq 0} p(i,j,n)q^n.\]

\noindent Let $L$, $k$, $n$, $m$ be non-negative integers. We will use standard notations in \cite{theoryofpartitions} and \cite{GasperRahman}.
\begin{align*}
	(a)_L:=(a;q)_L &:= \prod_{n=0}^{L-1}(1-aq^n),\\
	(a_1,a_2,\dots,a_k;q)_L &:= (a_1;q)_L(a_2;q)_L\dots (a_k;q)_L,\\
	(a;q)_\infty &:= \lim_{L\rightarrow\infty} (a;q)_L.
\end{align*}
We define the Gaussian binomial ($q$-binomial) and $q$-trinomial coefficients respectively as
\begin{align*}
   \genfrac{[}{]}{0pt}{}{k}{n}_q &:= \left\lbrace \begin{array}{ll}\frac{(q)_k}{(q)_n(q)_{k-n}}&\text{for }k\geq n \geq 0,\\
   0&\text{otherwise,}\end{array}\right.\\
   \genfrac{[}{]}{0pt}{}{k}{n,\ m}_q &:= \genfrac{[}{]}{0pt}{}{k}{n}_q\genfrac{[}{]}{0pt}{}{k-n}{m}_q= \frac{(q)_k}{(q)_n(q)_m(q)_{k-n-m}}\text{ for }k\geq n+m\text{ and } n,m \geq 0\text{.}
\end{align*}
Next, we are going to introduce basic $q$-hypergeometric series \cite{GasperRahman}. Let $r$ and $s$ be non-negative integers and $a_1,a_2,\dots,a_r,b_1,b_2,\dots,b_s,q,$ and $z$ be variables. Basic $q$-hypergeometric series is defined as \[{}_r\phi_s\left(\genfrac{}{}{0pt}{}{a_1,a_2,\dots,a_r}{b_1,b_2,\dots,b_s};q,z\right):=\sum_{n=0}^\infty \frac{(a_1,a_2,\dots,a_r;q)_n}{(q,b_1,\dots,b_s;q)_n}\left[(-1)^nq^{n\choose 2}\right]^{1-r+s}z^n.\]

Using techniques in \cite{theoryofpartitions}, it is clear that \begin{equation}\label{genFunc1or3mod4} \frac{q^{2k^2 + (-1)^\mu k }}{(q^4;q^4)_k}
\end{equation} is the generating function for the number of partitions with $k$ distinct parts congruent to $2+(-1)^\mu$ modulo 4, where $\mu\in\{0,1\}$. Therefore, Theorem~\ref{BasicCorollaryComb} amounts to the identity
\begin{equation}\label{BasicCorollary}
P(i,j,q) = (-q^2;q^2)_\infty\frac{q^{2i^2-i}}{(q^4;q^4)_i}\frac{q^{2j^2+j}}{(q^4;q^4)_j}
\end{equation} for a variable $q$ with $|q|<1$.
Equation \eqref{BasicCorollary} is going to be discussed further in the next section.

The BG-rank of a partition $\pi$---denoted $BG(\pi)$---is defined as
\begin{equation*}
BG(\pi) :=i-j,\end{equation*}where $i$ is the number of odd-indexed odd parts and $j$ is the number of even-indexed odd parts.
Another equivalent representation of BG-rank of a partition $\pi$ comes from 2-residue diagrams. One can show that
\begin{equation*}
BG(\pi)=r_0 - r_1,
\end{equation*}
where $r_0$ is the number of $0$'s in the 2-residue diagram of $\pi$, and $r_1$ is the number of $1$'s in the 2-residue diagram. BG-rank of the partition $\pi=(12,10,7,5,2)$ in the example of Table~\ref{table:TBL1} is equal to 0.

In Section~\ref{section2}, we present and prove explicit formulas for the generating functions for the number of partitions with fixed number of odd-indexed and even-indexed odd parts. We end Section~\ref{section2} by discussing a new partition theorem which extends the work of Savage and Sills in \cite{SavageSills}. In Section~\ref{section3}, we examine various partitions with a bound on the largest part and fixed values of BG-rank. Section~\ref{section4} deals with some $q$-theoretic implications of Rogers--Szeg\H{o} polynomials. In Section~\ref{section5}, we recall the work of Boulet \cite{Boulet}, Ishikawa and Zeng \cite{Masao} and provide combinatorial interpretation for the identity for the Rogers--Szeg\H{o} polynomials in \cite{WarnaarBerkovich}. We conclude this section with a new partition theorem which utilizes alternating sum of parts statistics. In Section~\ref{section6}, we extend the work of Boulet. There, we derive new formulas for generating functions for weighted partitions with bounds on the number of parts and the size of parts. We conclude with an outlook.

\section{Generating functions and Recurrence relations}\label{section2}

We start by generalizing our earlier definitions. Let $p_N(i,j,n)$ be the number of partitions of $n$ counted by $p(i,j,n)$ with the extra condition that parts of the partitions, that are being counted, are less than or equal to $N$. Let $P_{N}(i,j,q)$ be the generating function for $p_N(i,j,n)$: \[P_{N}(i,j,q):=\sum_{n\geq 0} p_N(i,j,n)q^n.\] The relation between $P_N(i,j,q)$ and $P(i,j,q)$ in \eqref{BasicCorollary} is \[P(i,j,q) = \lim_{N\rightarrow\infty} P_{N}(i,j,q).\] The following theorem gives explicit formulas for the generating functions $P_N(i,j,q)$.

\begin{theorem}\label{PgenFunc} Let $N$, $i$, and $j$ be non-negative integers and $q$ be a variable then
\begin{align}
\label{Peven}P_{2N}(i,j,q) &= q^{2i^2-i+2j^2+j} (-q^2;q^2)_{N-i-j} \genfrac{[}{]}{0pt}{}{N}{i,\ j}_{q^4},\\
\label{Podd}P_{2N+1}(i,j,q) &= q^{2i^2-i+2j^2+j} (-q^2;q^2)_{N-i-j+1} \genfrac{[}{]}{0pt}{}{N+1}{i,\ j}_{q^4}\frac{1-q^{2(N+i-j+1)}}{1-q^{4(N+1)}}.
\end{align}
\end{theorem}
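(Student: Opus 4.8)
The plan is to establish \eqref{Peven} and \eqref{Podd} simultaneously by induction on $N$, using a pair of coupled recurrences obtained by examining the largest part. Given a partition into distinct parts all of which are $\le N$, I would ask whether the integer $N$ itself occurs as a part. If it does not, the partition lives among the parts $\le N-1$ with the same statistics, contributing $P_{N-1}(i,j,q)$. If it does, then by distinctness $N$ must be the first part $\lambda_1$; deleting it lowers every surviving index by one and therefore interchanges the odd-indexed and even-indexed positions of the remaining parts. Consequently the roles of $i$ and $j$ are swapped in the deleted subpartition, and one separately records whether $\lambda_1=N$ is an odd-indexed odd part, which happens exactly when $N$ is odd. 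Carrying this out gives
\begin{align*}
P_{2M}(i,j,q) &= P_{2M-1}(i,j,q) + q^{2M}\,P_{2M-1}(j,i,q),\\
P_{2M+1}(i,j,q) &= P_{2M}(i,j,q) + q^{2M+1}\,P_{2M}(j,i-1,q),
\end{align*}
together with the base value $P_0(i,j,q)=1$ when $i=j=0$ and $P_0(i,j,q)=0$ otherwise, which is exactly \eqref{Peven} at $N=0$.

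For the inductive step I would feed each recurrence into the matching target formula. In the even case I substitute the (inductively known) odd formula for $P_{2M-1}(i,j,q)$ and $P_{2M-1}(j,i,q)$ into the first recurrence. Both summands share the factor $q^{2i^2+2j^2}(-q^2;q^2)_{M-i-j}\genfrac{[}{]}{0pt}{}{M}{i,\ j}_{q^4}$, since the $q^4$-trinomial coefficient is symmetric in its two lower entries and $(-q^2;q^2)_{M-i-j}$ is symmetric in $i$ and $j$. After cancelling this factor the desired equality collapses to the elementary identity
\[1-q^{4M}=\bigl(1-q^{2(M+i-j)}\bigr)+q^{2M+2(i-j)}\bigl(1-q^{2(M+j-i)}\bigr),\]
which holds on sight, so \eqref{Peven} follows immediately from \eqref{Podd} at the previous level.

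The odd step is where the real work lies, and I expect it to be the main obstacle. Here I substitute the even formula for $P_{2M}(i,j,q)$ and $P_{2M}(j,i-1,q)$ into the second recurrence and must reconcile the outcome with \eqref{Podd} at $N=M$. The two coefficients that appear, $\genfrac{[}{]}{0pt}{}{M}{i,\ j}_{q^4}$ and $\genfrac{[}{]}{0pt}{}{M}{j,\ i-1}_{q^4}$, have to be assembled into $\genfrac{[}{]}{0pt}{}{M+1}{i,\ j}_{q^4}$ by way of the $q^4$-Pascal relation $\genfrac{[}{]}{0pt}{}{M+1}{i}_{q^4}=\genfrac{[}{]}{0pt}{}{M}{i}_{q^4}+q^{4(M+1-i)}\genfrac{[}{]}{0pt}{}{M}{i-1}_{q^4}$; the mismatch between $(-q^2;q^2)_{M-i-j}$ and $(-q^2;q^2)_{M-i-j+1}$ is absorbed using $(-q^2;q^2)_{M-i-j+1}=\bigl(1+q^{2(M-i-j+1)}\bigr)(-q^2;q^2)_{M-i-j}$; and the rational factor $\frac{1-q^{2(M+i-j+1)}}{1-q^{4(M+1)}}$ must be produced along the way. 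Coordinating these three moving pieces is the delicate part, but after clearing denominators it should reduce to a single polynomial identity in $q$ with parameters $M,i,j$ that is verified directly. Once both formulas are proved, letting $N\to\infty$ in \eqref{Peven} recovers \eqref{BasicCorollary}, providing a consistency check on the whole argument.
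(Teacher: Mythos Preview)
Your proposal is correct and follows essentially the same route as the paper: both derive the recurrences
\[
P_{2N+\nu}(i,j,q)=P_{2N+\nu-1}(i,j,q)+q^{2N+\nu}P_{2N+\nu-1}(j,i-\nu,q)
\]
by deleting the largest part, then verify that the closed forms \eqref{Peven}--\eqref{Podd} satisfy them, with the even step reducing to exactly the identity $1-q^{4M}=(1-q^{2(M+i-j)})+q^{2(M+i-j)}(1-q^{2(M+j-i)})$ you wrote down.

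One small remark: you overestimate the difficulty of the odd step. You do not actually need the $q^4$-Pascal relation to assemble $\genfrac{[}{]}{0pt}{}{M+1}{i,\ j}_{q^4}$; instead, divide all three terms by $P_{2M}(i,j,q)$ and simplify the ratios of Pochhammers and $q^4$-trinomials directly. The verification then collapses to
\[
1+q^{2(M-i-j+1)}\frac{1-q^{4i}}{1-q^{2(M-i-j+1)}}=\frac{1-q^{2(M+i-j+1)}}{1-q^{2(M-i-j+1)}},
\]
which, after clearing the denominator, is the one-line identity $(1-q^{2(M-i-j+1)})+q^{2(M-i-j+1)}(1-q^{4i})=1-q^{2(M+i-j+1)}$. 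This is entirely parallel to the even case, which is why the paper simply says the odd recurrence ``can be shown in the same manner.''
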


We prove this assertion using recurrence relations. Using the definitions of $P_N(i,j,q)$ recurrence relations are easily attained by extracting the largest parts of partitions counted by these generating functions.

\begin{lemma}\label{RecRels}  Let $N$, $i$, and $j$ be non-negative integers, $\nu\in\{0,1\}$. Then
\begin{align}
\label{recursion} P_{2N+\nu}(i,j,q) &= P_{2N+\nu-1}(i,j,q) + q^{2N+\nu} \chi(i\geq\nu)P_{2N+\nu-1}(j,i-\nu,q),
\intertext{where $N\geq1-\nu$. The initial conditions are \[P_{0}(i,j,q) = \delta_{i,0}\delta_{j,0},\] where}
\nonumber \chi(\text{statement}) &:= \left\{\begin{array}{cl}
1 & \text{if statement is true,} \\ 
0 & \text{otherwise,} 
\end{array}  \right.\end{align} and $\delta_{i,j}:=\chi(i=j)$, the Kronecker delta function.
\end{lemma}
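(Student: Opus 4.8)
The plan is to prove both the initial condition and the recurrence directly from the combinatorial definition of $P_N(i,j,q)$, using the standard device of asking whether the largest admissible part actually occurs in a partition. First I would dispose of the initial condition: $P_0(i,j,q)$ is the generating function for partitions into distinct parts all of which are $\leq 0$, and the only such object is the empty partition, a partition of $0$ having no parts and hence $i=j=0$. Thus $P_0(i,j,q)=1$ exactly when $i=j=0$ and is $0$ otherwise, i.e.\ $P_0(i,j,q)=\delta_{i,0}\delta_{j,0}$.

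For the recurrence I would set $M=2N+\nu$, which satisfies $M\geq 1$ under the hypothesis $N\geq 1-\nu$, and split the partitions counted by $P_M(i,j,q)$ according to whether $M$ occurs as a part. A partition in which $M$ does \emph{not} occur has all of its distinct parts $\leq M-1=2N+\nu-1$ while keeping the same values of $i$ and $j$; these contribute precisely $P_{2N+\nu-1}(i,j,q)$, the first term of \eqref{recursion}.

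The second family is where the bookkeeping lives. If $M$ occurs it must be the largest part, so it sits in the first (odd-indexed) position. Deleting it removes weight $q^M=q^{2N+\nu}$ and shifts the index of every surviving part down by one, so parts formerly in even-indexed positions become odd-indexed and vice versa; hence the number of odd-indexed odd parts of the reduced partition equals the number of even-indexed odd parts of the original, and conversely. The one subtlety is the fate of $M$ itself: $M$ is odd exactly when $\nu=1$, in which case it was one of the original $i$ odd-indexed odd parts, so such a partition exists only if $i\geq 1$ and the reduced partition has $i-1$ former odd-indexed odd parts (now even-indexed); when $\nu=0$, $M$ is even, is never counted among the odd parts, imposes no constraint, and all $i$ survive. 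Both cases are captured uniformly by the factor $\chi(i\geq\nu)$ together with the observation that, after the index swap, the reduced partition (distinct parts, all $\leq 2N+\nu-1$) is counted by $P_{2N+\nu-1}(j,\,i-\nu,\,q)$. This second family therefore contributes $q^{2N+\nu}\chi(i\geq\nu)P_{2N+\nu-1}(j,i-\nu,q)$, and summing the two contributions yields \eqref{recursion}.

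I expect the only genuine obstacle to be making the index-shift argument airtight: namely, checking that deletion of the largest part is a genuine weight- and statistic-preserving bijection between the two families, correctly effecting the role swap $i\leftrightarrow j$ and the shift $i\mapsto i-\nu$, and verifying the boundary behaviour when $i$, $j$, or $N$ is small so that the factor $\chi(i\geq\nu)$ and the support conditions implicit in the $q$-binomial interpretation of $P_{2N+\nu-1}$ switch exactly the intended terms on and off.
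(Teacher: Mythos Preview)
Your proposal is correct and follows essentially the same approach as the paper: split according to whether the maximal admissible part $2N+\nu$ occurs, and in the case it does, remove it and track how the index shift swaps the roles of odd-indexed and even-indexed odd parts while the parity of $2N+\nu$ governs the $\chi(i\geq\nu)$ factor. If anything, your version is more explicit about the initial condition and about why the $\chi$ factor arises, points which the paper's proof leaves implicit.
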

We remark that similar generating functions and recurrence relation were discussed in \cite{CapparelliPPR}. 

\begin{proof} Let $\nu\in\{0,1\}$, $\pi=(\lambda_1,\lambda_2,\dots,\lambda_k)$ be a partition counted by $p_{2N+\nu}(i,j,n)$ for some $k$. If $\lambda_1 <2N+\nu$, then $\pi$ is also counted by $p_{2N+\nu-1}(i,j,q)$. If $\lambda_1 = 2N+\nu$, then by extracting $\lambda_1$ we get a new partition $\pi^*=(\lambda_2,\lambda_3,\dots,\lambda_k)$ into distinct parts with largest part $\leq 2N+\nu-1$. If $\nu=0$ then the number of odd parts stay the same, but the indexing of those parts switch. If $\nu=1$, then on top of the change of parities of odd numbers, the number of odd parts in $\pi^*$ is one less than the number of odd parts in $\pi$. Therefore, $\pi^*$ is a partition that is counted by $p_{2N+\nu-1}(j,i-\nu,q)$. This proves the lemma.
\end{proof}

We need to show that the right-hand sides of \eqref{Peven} and \eqref{Podd} satisfy the same recurrence relations of Lemma~\ref{RecRels} to prove Theorem~\ref{PgenFunc}. The right-hand side of \eqref{Peven} can be rewritten for this purpose:
\begin{align*}
q^{2i^2-i+2j^2+j}& (-q^2;q^2)_{N-i-j} \genfrac{[}{]}{0pt}{}{N}{i,\ j}_{q^4}\\ &= q^{2i^2-i+2j^2+j} (-q^2;q^2)_{N-i-j} \genfrac{[}{]}{0pt}{}{N}{i,\ j}_{q^4}\frac{1-q^{2(N+i-j)}+q^{2(N+i-j)}(1-q^{2(N-i+j)})}{1-q^{4N}}\\
&= q^{2i^2-i+2j^2+j} (-q^2;q^2)_{N-i-j} \genfrac{[}{]}{0pt}{}{N}{i,\ j}_{q^4}\frac{1-q^{2(N+i-j)}}{1-q^{4N}}\\ &\hspace{1cm}+ q^{2i^2+i+2j^2-j+2N} (-q^2;q^2)_{N-i-j} \genfrac{[}{]}{0pt}{}{N}{i,\ j}_{q^4}\frac{1-q^{2(N-i+j)}}{1-q^{4N}}.
\end{align*}
This proves that the right-hand side \eqref{Peven} satisfies the recurrence relation of Lemma~\ref{RecRels} for $\nu=0$. Recurrence relation of \eqref{Podd} can be shown in the same manner. Moreover, the initial condition of Lemma~\ref{RecRels} is obviously true for the right-hand side of \eqref{Peven} and \eqref{Podd}. This finishes the proof of Theorem~\ref{PgenFunc}. 

Three comments will be noted here. First, $P_{2N+1}(i,j,q)$ is a polynomial in $q$ for all choices of $N$, $i$, and $j$ from is definition. It is also easy to see that the right-hand side of the \eqref{Podd} needs to be a polynomial by combining \eqref{Peven}, and \eqref{recursion} with $\nu=1$. Yet in its current form the right-hand side of \eqref{Podd} comes with a non-trivial rational term with no obvious cancellation. This is going to be addressed and a new representation will be given in Section~\ref{section4}. Secondly, $N\rightarrow\infty$ in either line of Theorem~\ref{PgenFunc} proves the identity \eqref{BasicCorollary}. Hence, Theorem~\ref{BasicCorollaryComb} follows. Lastly, let $k$ be a fixed non-negative integer. Taking the limit $N\rightarrow\infty$ in \eqref{Peven} and/or \eqref{Podd}, setting $j=k$ ($i=k$), summing over $i$ ($j$), and finally using $q$-binomial theorem \cite[(II.3)]{GasperRahman}, we get Theorem~\ref{generalSS}.
\begin{theorem}\label{generalSS} 
Let $k$ be a fixed non-negative integer, then
\begin{align}
\label{generalSavageSills1}\sum_{i\geq0} P(i,k,q) &= (-q^2;q^2)_\infty \frac{q^{2k^2+k}}{(q^4;q^4)_k}\sum_{i\geq0} \frac{q^{2i^2-i}}{(q^4;q^4)_i} = (-q^2;q^2)_\infty (-q;q^4)_\infty\frac{q^{2k^2+k}}{(q^4;q^4)_k}, \\
\label{generalSavageSills2}\sum_{j\geq0} P(k,j,q) &= (-q^2;q^2)_\infty \frac{q^{2k^2-k}}{(q^4;q^4)_k} \sum_{j\geq0} \frac{q^{2j^2+j}}{(q^4;q^4)_j} = (-q^2;q^2)_\infty (-q^3;q^4)_\infty\frac{q^{2k^2-k}}{(q^4;q^4)_k}.
\end{align}\end{theorem}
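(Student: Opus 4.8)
The plan is to build Theorem~\ref{generalSS} directly on the closed-form product \eqref{BasicCorollary} for $P(i,j,q)$, which is obtained by letting $N\to\infty$ in \eqref{Peven} (there the ratio $(q^4;q^4)_N/(q^4;q^4)_{N-i-j}\to 1$ and $(-q^2;q^2)_{N-i-j}\to(-q^2;q^2)_\infty$, leaving the stated product), and then to freeze one index while summing over the other. For \eqref{generalSavageSills1} I would set $j=k$ and sum over $i\geq0$: the factors $(-q^2;q^2)_\infty$ and $q^{2k^2+k}/(q^4;q^4)_k$ carry no dependence on $i$, so they pull out of the sum and yield the first equality, with residual sum $\sum_{i\geq0} q^{2i^2-i}/(q^4;q^4)_i$. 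Line \eqref{generalSavageSills2} is the mirror image: set $i=k$, sum over $j\geq0$, extract $q^{2k^2-k}/(q^4;q^4)_k$, and reduce to $\sum_{j\geq0} q^{2j^2+j}/(q^4;q^4)_j$.

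The heart of the argument is evaluating these two residual sums, and the decisive move is to pass to base $Q=q^4$. Using $2i^2-i = 4\binom{i}{2}+i$ and $2j^2+j = 4\binom{j}{2}+3j$, one rewrites $q^{2i^2-i} = Q^{\binom{i}{2}}q^{i}$ and $q^{2j^2+j}=Q^{\binom{j}{2}}q^{3j}$, so each sum takes the shape of Euler's identity $\sum_{n\geq0} Q^{\binom{n}{2}}z^n/(Q;Q)_n = (-z;Q)_\infty$, i.e.\ the $a\to\infty$ specialization of the $q$-binomial theorem \cite[(II.3)]{GasperRahman}. Taking $z=q$ produces $(-q;q^4)_\infty$ and $z=q^3$ produces $(-q^3;q^4)_\infty$, which delivers the second equalities and completes both identities.

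I expect no deep obstacle here; the effort is bookkeeping. The one place demanding care is pinning down the exponent decompositions $2i^2-i=4\binom{i}{2}+i$ and $2j^2+j=4\binom{j}{2}+3j$, since a slip in the linear residue ($i$ versus $3j$) would invoke the $q$-binomial theorem with the wrong value of $z$ and spoil the product side. A minor point worth a sentence is that $\sum_{i\geq0}P(i,k,q)$ is a legitimate formal power series: the lowest power of $q$ occurring in $P(i,k,q)$ grows with $i$, so each coefficient of $q^n$ receives contributions from only finitely many $i$, which makes the termwise extraction of constants and the application of Euler's identity fully rigorous.
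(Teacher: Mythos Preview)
Your proposal is correct and follows essentially the same route as the paper: take $N\to\infty$ in \eqref{Peven} to obtain \eqref{BasicCorollary}, fix one index and sum over the other, then evaluate the residual sum via the $q$-binomial theorem \cite[(II.3)]{GasperRahman} in its Euler form. Your exponent decompositions $2i^2-i=4\binom{i}{2}+i$ and $2j^2+j=4\binom{j}{2}+3j$ are correct, and the extra remark on formal convergence is a nice touch the paper omits.
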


Next, we can compare combinatorial interpretations of extremes of \eqref{generalSavageSills1} and \eqref{generalSavageSills2}. The sum on the left-hand side of the identity \eqref{generalSavageSills1} (or \eqref{generalSavageSills2}) gives us the generating function for number of partitions into distinct parts with $k$ even-indexed (or odd-indexed) odd parts. On the right-hand side of \eqref{generalSavageSills1} (or \eqref{generalSavageSills2}) we have the generating function for the number of partitions into distinct parts with exactly $k$ parts congruent to 1 (or 3) modulo 4. We rewrite these interpretations together.

\begin{theorem}\label{generalSSCombinatorial} For a fixed non-zero integer $k$, the number of partitions of $n$ into distinct parts where there are $k$ odd-indexed (even-indexed) odd parts is equal to the number of partitions of $n$ into distinct parts where there are exactly $k$ parts congruent to 1 (3) modulo 4.\end{theorem}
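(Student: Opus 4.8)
The plan is to read the two analytic identities of Theorem~\ref{generalSS} combinatorially, by interpreting each side as the generating function for a suitable class of partitions into distinct parts and then comparing coefficients of $q^n$. No new analysis is required beyond the definitions already in place; all of the content lies in identifying what each factor counts.

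First I would treat the left-hand sides. By the definition of $P(i,j,q)$, the coefficient of $q^n$ in $P(i,j,q)$ is the number of partitions of $n$ into distinct parts having exactly $i$ odd-indexed odd parts and exactly $j$ even-indexed odd parts. Consequently, in \eqref{generalSavageSills2} the sum $\sum_{j\geq0}P(k,j,q)$ pins the number of odd-indexed odd parts to $k$ while letting the number of even-indexed odd parts range freely, so it is the generating function for partitions into distinct parts with exactly $k$ odd-indexed odd parts. Symmetrically, the sum $\sum_{i\geq0}P(i,k,q)$ in \eqref{generalSavageSills1} is the generating function for partitions into distinct parts with exactly $k$ even-indexed odd parts.

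Next I would interpret the right-hand sides by factoring the products. In \eqref{generalSavageSills2} the factor $(-q^2;q^2)_\infty$ generates partitions into distinct even parts, $(-q^3;q^4)_\infty$ generates partitions into distinct parts congruent to $3$ modulo $4$, and, by \eqref{genFunc1or3mod4} with $\mu=1$, the factor $q^{2k^2-k}/(q^4;q^4)_k$ generates partitions into exactly $k$ distinct parts congruent to $1$ modulo $4$. Since every part of a partition into distinct parts is even, or congruent to $1$, or congruent to $3$ modulo $4$, the product generates precisely the partitions into distinct parts with exactly $k$ parts congruent to $1$ modulo $4$. Running the same bookkeeping on \eqref{generalSavageSills1}, now invoking \eqref{genFunc1or3mod4} with $\mu=0$ for the factor $q^{2k^2+k}/(q^4;q^4)_k$, shows that its right-hand side generates partitions into distinct parts with exactly $k$ parts congruent to $3$ modulo $4$. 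Comparing coefficients of $q^n$ in \eqref{generalSavageSills2} and \eqref{generalSavageSills1} then delivers the two asserted equalities.

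The argument carries no genuine obstacle, since it is merely the summed form of Theorem~\ref{BasicCorollaryComb}: fixing one of $i$, $j$ and summing out the other replaces a refined count by a total count. The only point that demands care is the residue bookkeeping, namely aligning the parity of the index (odd-indexed versus even-indexed) with the correct residue class modulo $4$ through \eqref{genFunc1or3mod4}; tracking the sign of $(-1)^\mu$ there is exactly what guarantees that odd-indexed odd parts correspond to parts congruent to $1$ and even-indexed odd parts to parts congruent to $3$ modulo $4$, and not the other way around.
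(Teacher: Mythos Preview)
Your proposal is correct and follows essentially the same approach as the paper: you take the analytic identities \eqref{generalSavageSills1} and \eqref{generalSavageSills2} from Theorem~\ref{generalSS} as given and read off their combinatorial meaning factor by factor, exactly as the paper does in the paragraph preceding Theorem~\ref{generalSSCombinatorial}. Your residue bookkeeping via \eqref{genFunc1or3mod4} is done with the correct parities of $\mu$, so the matching of odd-indexed odd parts with residue~$1$ and even-indexed odd parts with residue~$3$ is right.
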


Special case $k=0$ in the Theorem~\ref{generalSS} and Theorem~\ref{generalSSCombinatorial} is well known in literature. Setting $k=0$, we can rewrite products in \eqref{generalSavageSills1} and \eqref{generalSavageSills2} as
\begin{align}\label{prodSills}(-q^2;q^2)_\infty (-q~;q^4)_\infty &= \frac{(-q;q^4)_\infty (q;q^4)_\infty}{(q^2;q^4)_\infty(q;q^4)_\infty} =\frac{(q^2;q^8)_\infty}{(q,q^2,q^5,q^6;q^8)_\infty} =  \frac{1}{(q,q^5,q^6;q^8)_\infty},\\
\intertext{and}
\label{prodSills2}(-q^2;q^2)_\infty (-q^3;q^4)_\infty &= \frac{1}{(q^2,q^3,q^7;q^8)_\infty},
\end{align} 
where we use the Euler Theorem. Far right products of \eqref{prodSills} and \eqref{prodSills2} appear in the little\footnote{This terminology was introduced by Alladi \cite{Alladi}. It is used to distinguish between (little) G\"ollnitz and (big) G\"ollnitz partition theorems.} G\"ollnitz identities \cite{Gollnitz}. Little G\"ollnitz identities have the combinatorial counterparts.

\begin{theorem}[G\"ollnitz]\label{realLittleGollnitz} The number of partitions of $n$ into parts (greater than 1) differing by at least 2, and no consecutive odd parts appear in the partitions is equal to the number of partitions of $n$ into parts congruent to 1, 5 or 6 modulo 8 (2, 3 or 7 modulo 8).
\end{theorem}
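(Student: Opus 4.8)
The plan is to reduce the statement to facts already obtained in this section, so that only the difference-condition side requires new analysis. By \eqref{prodSills} and \eqref{prodSills2}, Euler's theorem gives that $1/(q,q^5,q^6;q^8)_\infty$ and $1/(q^2,q^3,q^7;q^8)_\infty$ are the generating functions for partitions into parts congruent to $1,5,6$ and to $2,3,7$ modulo $8$, respectively. At the same time, the $k=0$ instances of \eqref{generalSavageSills1} and \eqref{generalSavageSills2} identify these two products with $\sum_{i\geq 0}P(i,0,q)$ and $\sum_{j\geq 0}P(0,j,q)$, i.e.\ with the generating functions for partitions into distinct parts having no even-indexed odd part, respectively no odd-indexed odd part (cf.\ Theorem~\ref{generalSSCombinatorial}). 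Thus it suffices to match the difference-condition side of the theorem with one of these distinct-part families.

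Concretely, I would prove the correspondence
\[\text{(differences}\geq 2,\ \text{no consecutive odd parts)}\ \longleftrightarrow\ \text{(distinct parts, no even-indexed odd part)},\]
together with its companion in which the extra restriction ``parts $>1$'' corresponds to ``no odd-indexed odd part'', by constructing an explicit size-preserving bijection $\Phi$ between the two families. Given $\mu_1>\mu_2>\cdots>\mu_\ell$ with $\mu_t-\mu_{t+1}\geq 2$ and no two odd parts differing by exactly $2$, the image $\Phi(\mu)$ should be a partition into distinct parts all of whose even-indexed entries are even; the map is the identity on those $\mu$ already lying in the target family and acts nontrivially exactly where an odd part occupies an even-indexed slot. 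Small cases indicate that the local move pushes such an offending odd part down into an even part while creating a new smallest part (for instance $(4,1)\mapsto(3,2)$ and $(7,1)\mapsto(5,2,1)$), and I would define $\Phi$ by iterating this move from the bottom of the partition upward, then verify that it is a well-defined bijection preserving $n$.

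The main obstacle is pinning this bijection down in closed form and checking it is well defined: the difference condition and the parity/indexing condition interact, so one must show simultaneously that the constraints ``differences $\geq 2$'' and ``no consecutive odd parts'' on $\mu$ are equivalent, under $\Phi$, to distinctness plus the even-indexed-even constraint, with no collisions. A less combinatorial route sidesteps this entirely: extract the smallest part (tracking its parity) to obtain a $q$-functional equation for the generating function of the difference-condition partitions, and verify that $1/(q,q^5,q^6;q^8)_\infty$ solves it with the correct initial data; matching the solution to the product is then routine, and the companion identity follows by restricting to parts $>1$. In either case, once the difference side is shown to equal the relevant product, Theorem~\ref{realLittleGollnitz} follows from \eqref{prodSills} and \eqref{prodSills2}.
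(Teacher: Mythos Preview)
The paper does not prove Theorem~\ref{realLittleGollnitz} at all: it is stated as a classical result of G\"ollnitz and attributed to \cite{Gollnitz}. Its role in Section~\ref{section2} is purely expository --- the paper recalls the little G\"ollnitz identities only so as to recognize the products on the right of \eqref{prodSills} and \eqref{prodSills2}, and from that comparison it derives the Savage--Sills Theorem~\ref{NewLittleGollnitz}. So there is no ``paper's proof'' to compare your proposal against.

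Your plan, by contrast, tries to manufacture a proof of G\"ollnitz's theorem from inside Section~\ref{section2}, by routing through the $k=0$ case of Theorem~\ref{generalSSCombinatorial}. That is a legitimate and potentially interesting idea, but as written it is not a proof: the entire content of the theorem has been pushed into the bijection $\Phi$ between the G\"ollnitz difference-condition family and the distinct-parts family with no even-indexed odd part, and you explicitly flag constructing and verifying $\Phi$ as ``the main obstacle'' without resolving it. Two worked images, $(4,1)\mapsto(3,2)$ and $(7,1)\mapsto(5,2,1)$, do not determine a map, and ``pushes such an offending odd part down into an even part while creating a new smallest part'' is not a definition one can check for injectivity, surjectivity, or even well-definedness under iteration. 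The alternative analytic route you mention (deriving a $q$-functional equation by extracting the smallest part) is close in spirit to standard proofs of the little G\"ollnitz identities, but again it is only named, not carried out. In short: the paper cites this theorem rather than proving it, and your proposal leaves precisely the step that carries all the weight --- the bijection or the functional-equation computation --- unexecuted.
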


Comparing Theorem~\ref{generalSSCombinatorial} with $k=0$ and \eqref{prodSills}, \eqref{prodSills2} we arrive at the ``new little G\"ollnitz'' theorems of Savage and Sills \cite{SavageSills}.

\begin{theorem}[Savage, Sills]\label{NewLittleGollnitz} The number of partitions of $n$ into distinct parts where odd-indexed (even-indexed) parts are even is equal to the number of partitions of $n$ into parts congruent to 2, 3 or 7 modulo 8 (1, 5 or 6 modulo 8).\end{theorem}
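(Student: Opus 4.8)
The plan is to specialize the identities of Theorem~\ref{generalSS} to the value $k=0$ and then read off each side combinatorially, so that the asserted equinumeracy falls out by comparing coefficients of $q^n$. Setting $k=0$ in \eqref{generalSavageSills1} and \eqref{generalSavageSills2} gives
\[
\sum_{i\geq0} P(i,0,q) = (-q^2;q^2)_\infty (-q;q^4)_\infty,
\qquad
\sum_{j\geq0} P(0,j,q) = (-q^2;q^2)_\infty (-q^3;q^4)_\infty.
\]
Both products are exactly the ones already simplified in \eqref{prodSills} and \eqref{prodSills2}, so the analytic content is in hand and only the combinatorial translation of each side remains.

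First I would interpret the left-hand sides using the definition of $P(i,j,q)$. By construction $P(i,0,q)$ enumerates partitions into distinct parts with $i$ odd-indexed odd parts and \emph{no} even-indexed odd parts; summing over all $i\geq0$ therefore counts partitions into distinct parts in which every even-indexed part is even. Symmetrically, $\sum_{j\geq0}P(0,j,q)$ counts partitions into distinct parts in which every odd-indexed part is even. Thus the two generating functions above are precisely the ``even-indexed parts are even'' and ``odd-indexed parts are even'' enumerators appearing in the theorem.

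Next I would interpret the right-hand sides via the product evaluations already established. By \eqref{prodSills},
\[
(-q^2;q^2)_\infty (-q;q^4)_\infty = \frac{1}{(q,q^5,q^6;q^8)_\infty},
\]
which is the generating function for partitions into parts congruent to $1,5$ or $6$ modulo $8$; by \eqref{prodSills2},
\[
(-q^2;q^2)_\infty (-q^3;q^4)_\infty = \frac{1}{(q^2,q^3,q^7;q^8)_\infty},
\]
the generating function for partitions into parts congruent to $2,3$ or $7$ modulo $8$. Equating the two readings of $\sum_{i\geq0}P(i,0,q)$ shows that partitions of $n$ into distinct parts with all even-indexed parts even are equinumerous with partitions of $n$ into parts $\equiv 1,5,6 \imod{8}$, and equating the two readings of $\sum_{j\geq0}P(0,j,q)$ gives the companion statement pairing ``odd-indexed parts even'' with parts $\equiv 2,3,7 \imod{8}$. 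Together these are exactly the two halves of the theorem.

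The only point that requires genuine care is the bookkeeping of which index is matched with which residue pattern: one must confirm that the vanishing of the \emph{even}-indexed odd parts is the condition ``even-indexed parts are even,'' and that its generating function is the $(-q;q^4)_\infty$ product pairing with $1,5,6$ modulo $8$, while the vanishing of the \emph{odd}-indexed odd parts pairs with $2,3,7$ modulo $8$. Once this correspondence is pinned down, the product manipulations are routine because \eqref{prodSills} and \eqref{prodSills2} are already derived, so no further $q$-series identity is needed.
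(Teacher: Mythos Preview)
Your proposal is correct and follows essentially the same route as the paper: specialize Theorem~\ref{generalSS} to $k=0$, interpret the left-hand sums combinatorially as the ``odd-indexed parts even'' and ``even-indexed parts even'' generating functions, and then invoke the product simplifications \eqref{prodSills} and \eqref{prodSills2} for the right-hand sides. Your pairing of index conditions with residue classes is the right one, matching the paper's derivation exactly.
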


\noindent
An interested reader is invited to examine \cite{Alladi} for another companion to the G\"ollnitz identities.

\section{Partitions with fixed value of BG-rank}\label{section3}

The explicit generating function formulas for $P_N(i,j,q)$, given in Theorem~\ref{PgenFunc}, opens the door to various applications and interesting interpretations. We can start by setting $i=j+k$ for any integer $k$. $P_N(j+k,j,q)$ is the generating function for number of partitions into distinct parts $\leq N$ with $j$ even-indexed odd parts and BG-rank equal to $k$. By summing these functions over $j$, we lift the restriction on the even-indexed odd parts. Let $B_N(k,q)$ denote the generating function for number of partitions into distinct parts less than or equal to $N$ with BG-rank equal to $k$. Then, \begin{equation}\label{BGdefinition}B_{N}(k,q):=\sum_{j\geq 0} P_{N}(j+k,j,q).\end{equation} We have a new combinatorial result.

\begin{theorem}\label{distinctBG} Let $N$ and $j$ be non-negative integers, $k$ be any integer. Then\begin{equation}\label{distinctBGEQN}
B_{2N+\nu}(k,q) = q^{2k^2-k}  \genfrac{[}{]}{0pt}{}{2N+\nu}{N+k}_{q^2},\end{equation}
where $\nu\in\{0,1\}$.
\end{theorem}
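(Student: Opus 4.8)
The plan is to prove Theorem~\ref{distinctBG} by substituting the explicit formula from Theorem~\ref{PgenFunc} into the definition \eqref{BGdefinition} and evaluating the resulting sum over $j$ in closed form. Setting $i=j+k$, the exponent $2i^2-i+2j^2+j$ becomes $2(j+k)^2-(j+k)+2j^2+j = 4j^2+4jk+2k^2-k = 2k^2-k + 4j(j+k)$, which factors the fixed part $q^{2k^2-k}$ out front and leaves a $j$-dependent factor $q^{4j(j+k)} = (q^4)^{j(j+k)}$. This is the key simplification: after pulling out $q^{2k^2-k}$, the remaining sum should collapse to a single $q$-binomial coefficient in base $q^2$.

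First I would handle the even case $\nu=0$. Here each summand is
\[
P_{2N}(j+k,j,q) = q^{2k^2-k}\,(q^4)^{j(j+k)}\,(-q^2;q^2)_{N-(2j+k)}\genfrac{[}{]}{0pt}{}{N}{j+k,\ j}_{q^4},
\]
so after factoring out $q^{2k^2-k}$ the claim reduces to the $q$-series identity
\[
\sum_{j\geq 0} (q^4)^{j(j+k)}(-q^2;q^2)_{N-2j-k}\genfrac{[}{]}{0pt}{}{N}{j+k,\ j}_{q^4} = \genfrac{[}{]}{0pt}{}{2N}{N+k}_{q^2}.
\]
To prove this I would rewrite the infinite product $(-q^2;q^2)_{N-2j-k}$ together with the $q^4$-trinomial coefficient so that everything lives in base $q^2$, using $(-q^2;q^2)_m = (q^4;q^4)_m/(q^2;q^2)_m$ (an Euler-type identity) to convert the mixed bases into a single pretty $q^2$-expression. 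The cleanest route is probably to recognize the left-hand side as a known expansion of a $q$-binomial coefficient: the Gaussian binomial $\genfrac{[}{]}{0pt}{}{2N}{N+k}_{q^2}$ has a standard decomposition obtained by splitting a size-$(N+k)$ subset of a $2N$-element ground set according to how it distributes across even and odd residues, which naturally produces a sum over $j$ with exactly the $q^4$-trinomial weight and the $q^{4j(j+k)}$ quadratic factor. I would make this bijective/combinatorial splitting explicit, or alternatively verify the identity by induction on $N$ using the Pascal-type recurrence for $q$-binomials, checking the base case $N=0$ directly.

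The odd case $\nu=1$ proceeds identically but uses formula \eqref{Podd}, which carries the extra rational factor $\frac{1-q^{2(N+i-j+1)}}{1-q^{4(N+1)}}$; with $i=j+k$ this becomes $\frac{1-q^{2(N+k+1)}}{1-q^{4(N+1)}}$, a factor independent of the summation index $j$. Since it does not depend on $j$, it pulls out of the sum alongside $q^{2k^2-k}$, so the summation over $j$ is structurally the same as in the even case, and I would reduce it to the analogous $q^2$-binomial identity yielding $\genfrac{[}{]}{0pt}{}{2N+1}{N+k}_{q^2}$.

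I expect the main obstacle to be the base-changing bookkeeping in the summation identity: the summand mixes $q^2$ and $q^4$ Pochhammer symbols and a trinomial coefficient, and showing that these telescope or combine into a single clean $q^2$-binomial requires care with the range of $j$ (the trinomial vanishes unless $0\leq j$ and $2j+k\leq N$) and with the shifted argument $N-2j-k$ in the $(-q^2;q^2)$ factor. Rather than attacking the raw $q$-series sum, my preferred strategy is the combinatorial one: interpret $B_{2N+\nu}(k,q)$ directly as counting partitions into distinct parts $\leq 2N+\nu$ with fixed BG-rank $k$, and set up a weight-preserving bijection between these and the objects counted by $q^{2k^2-k}\genfrac{[}{]}{0pt}{}{2N+\nu}{N+k}_{q^2}$ via the $2$-residue Ferrers diagram, where the $q^{2k^2-k}$ term encodes a fixed staircase forced by the BG-rank and the $q^2$-binomial counts the remaining freedom. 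If that bijection proves delicate, the inductive verification against the Pascal recurrence is a reliable fallback.
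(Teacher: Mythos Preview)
Your algebraic setup is correct: the factorization of the exponent as $2k^2-k+4j(j+k)$, the extraction of $q^{2k^2-k}$, and the observation that for $\nu=1$ the extra factor $\frac{1-q^{2(N+k+1)}}{1-q^{4(N+1)}}$ is independent of $j$ (so the odd case reduces to the even-case identity at level $N+1$, followed by the one-line simplification $\genfrac{[}{]}{0pt}{}{2N+2}{N+1+k}_{q^2}\cdot\frac{1-q^{2(N+k+1)}}{1-q^{4(N+1)}}=\genfrac{[}{]}{0pt}{}{2N+1}{N+k}_{q^2}$) are all fine.

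Where you diverge from the paper is in the evaluation of the remaining sum. The paper does not attempt a bijection or an induction: after standard Pochhammer manipulations it recognizes the sum as a terminating ${}_2\phi_1$ and dispatches it in one stroke with the $q$-Gauss summation \cite[(II.8)]{GasperRahman}. This sidesteps exactly the base-changing bookkeeping you flag as the main obstacle. Your inductive fallback \emph{does} work, but the clean way to run it is not to attack the Pascal recurrence on the target $q$-binomial in isolation; rather, sum the recurrences of Lemma~\ref{RecRels} over $j$ to obtain
\[
B_{2N}(k,q)=B_{2N-1}(k,q)+q^{2N}B_{2N-1}(-k,q),\qquad
B_{2N+1}(k,q)=B_{2N}(k,q)+q^{2N+1}B_{2N}(1-k,q),
\]
and then verify that $q^{2k^2-k}\genfrac{[}{]}{0pt}{}{2N+\nu}{N+k}_{q^2}$ satisfies the same pair via the $q$-Pascal rule together with the symmetry of the Gaussian binomial. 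By contrast, your \emph{preferred} route---a direct weight-preserving bijection via the $2$-residue diagram---is more ambitious than you let on: the paper itself lists a combinatorial proof of this very theorem among the problems deferred to future work in the Outlook, so treating the bijection as the primary strategy and the $q$-series argument as the fallback inverts the actual difficulty.
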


\begin{proof} This identity is a consequence of the $q$-Gauss identity \cite[(II.8)]{GasperRahman}. We can outline the proof as follows. Using the definition of $P_{2N+\nu}(j+k,j,q)$, (I.10), (I.25) in \cite{GasperRahman} as needed we come to the following
\begin{align*}
B_{2N+\nu}(k,q)&=q^{2k^2-k} \frac{(q^4;q^4)_N(1-\nu q^{2(N+k+1)})}{(q^4;q^4)_k(q^2;q^2)_{N-k+\nu}}\times\\\nonumber&\hspace*{2cm} {}_2\phi_1\left( \genfrac{}{}{0pt}{}{q^{-2(N-k+\nu)},\; q^{-2(N-k+\nu-1)}}{q^{4(k+1)}}; q^4,\ q^{4(N+\nu)+2} \right).
\end{align*}
Applying the $q$-Gauss identity and rewriting infinite products in a non-trivial fashion using (I.5) in \cite{GasperRahman} repeatedly yields
\begin{equation*}
B_{2N+\nu}(k,q)= q^{2k^2-k} \genfrac{[}{]}{0pt}{}{2N+\nu}{N+k}_{q^2}.
\end{equation*}
\end{proof}

Theorem~\ref{distinctBG} can also be used to prove the similar result for partitions (not necessarily in distinct parts) with the same type of bounds on the largest part and fixed BG-rank. Let $\wTilde{B}_{N}(k,q)$ be the generating function for number of partitions into parts less than or equal to $N$ with BG-rank equals $k$.

\begin{theorem}\label{unrestrictedBG} Let $N$ be a non-negative integer, $k$ be any integer. Then
\begin{equation*} \wTilde{B}_{2N+\nu}(k,q)=  \frac{q^{2k^2-k}}{(q^2;q^2)_{N+k}(q^2;q^2)_{N-k+\nu}},\end{equation*}
where $\nu\in\{0,1\}$.
\end{theorem}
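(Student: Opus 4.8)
The plan is to derive the generating function for unrestricted partitions from the known formula for partitions into distinct parts established in Theorem~\ref{distinctBG}. The key structural observation is that any partition with bounded largest part decomposes uniquely into a partition into \emph{distinct} parts together with an unrestricted ``padding'' contribution, and that this padding does not disturb the BG-rank. First I would recall that a standard way to pass from distinct parts to arbitrary multiplicities is to conjugate, or equivalently to think of an unrestricted partition into parts $\leq 2N+\nu$ as a partition into \emph{at most} $2N+\nu$ parts via transposition of the Ferrers diagram. The crucial point is to track what conjugation does to the BG-rank. Using the $2$-residue description $BG(\pi)=r_0-r_1$, I would check how the residue pattern transforms under transposition; since transposition swaps rows and columns but the checkerboard colouring is determined by box coordinates, one can compute the effect explicitly and show that the BG-rank statistic is compatible (up to a controlled shift) with the passage to the conjugate picture.

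Concretely, I expect the cleanest route is generating-function algebra rather than a bijection. The plan is to write $\wTilde{B}_{2N+\nu}(k,q)$ by first classifying an unrestricted partition with largest part $\leq 2N+\nu$ according to its underlying set of \emph{distinct} part-sizes, and then accounting for the repetitions. Since only the parities of parts (and their indexed positions) affect the BG-rank, and repeated copies of a part can be inserted in a way that contributes only even blocks to the residue count, the repetition factors should assemble into a product of the form $1/(q^2;q^2)_{\text{something}}$. Summing over all admissible distinct ``skeletons'' with fixed BG-rank $k$ replaces the $q$-binomial coefficient $\genfrac{[}{]}{0pt}{}{2N+\nu}{N+k}_{q^2}$ from Theorem~\ref{distinctBG} by its well-known generating-function avatar: using the limiting/summation identity that turns a Gaussian binomial into the reciprocal of a product of two $(q^2;q^2)$ factors. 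That is, I would aim to show
\begin{equation*}
\wTilde{B}_{2N+\nu}(k,q)=\frac{B_{2N+\nu}(k,q)}{(q^2;q^2)_{2N+\nu}}\cdot(\text{correcting factor}),
\end{equation*}
and verify that the Gaussian binomial from Theorem~\ref{distinctBG}, namely $q^{2k^2-k}\genfrac{[}{]}{0pt}{}{2N+\nu}{N+k}_{q^2}$, divided by the appropriate $(q^2;q^2)$-product, collapses to $q^{2k^2-k}/\big((q^2;q^2)_{N+k}(q^2;q^2)_{N-k+\nu}\big)$ by the definition of the $q$-binomial coefficient itself.

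In fact the last observation suggests the most economical argument: since
\begin{equation*}
\genfrac{[}{]}{0pt}{}{2N+\nu}{N+k}_{q^2}=\frac{(q^2;q^2)_{2N+\nu}}{(q^2;q^2)_{N+k}(q^2;q^2)_{N-k+\nu}},
\end{equation*}
the claimed formula for $\wTilde{B}_{2N+\nu}(k,q)$ is \emph{exactly} $B_{2N+\nu}(k,q)$ divided by $(q^2;q^2)_{2N+\nu}$. So the entire theorem reduces to proving the single identity $\wTilde{B}_{2N+\nu}(k,q)=B_{2N+\nu}(k,q)/(q^2;q^2)_{2N+\nu}$, together with Theorem~\ref{distinctBG}. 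I would therefore concentrate all effort on establishing this one relation combinatorially: multiplication by $1/(q^2;q^2)_{2N+\nu}$ is the generating function for partitions into even parts with at most $2N+\nu$ distinct even part-sizes (equivalently, into parts from a set of $2N+\nu$ ``levels'' each weighted by $q^2$), and I must exhibit a weight- and BG-rank-preserving bijection between an arbitrary partition with largest part $\leq 2N+\nu$ and a pair consisting of a distinct-part partition (counted by $B$) and such an even contribution. The main obstacle will be precisely this bijection: one must verify that adjoining the repeated parts shifts neither the alternating $0/1$ residue balance nor, equivalently, the value $i-j$, so that BG-rank is preserved throughout. I expect the natural map sends repeated parts to pairs that each contribute one $0$ and one $1$ to the residue diagram (hence net BG-rank zero), which is why the repetition generating function appears in base $q^2$ and leaves the prefactor $q^{2k^2-k}$ untouched.
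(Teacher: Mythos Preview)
Your proposal is correct and follows essentially the same route as the paper: reduce to the identity $\wTilde{B}_{2N+\nu}(k,q)=B_{2N+\nu}(k,q)/(q^2;q^2)_{2N+\nu}$ and prove it via the bijection that strips off pairs of equal parts (the paper's set $E_{2N+\nu}$ of partitions into parts $\le 2N+\nu$ each appearing an even number of times), noting that removing two consecutive equal rows deletes equal numbers of $0$'s and $1$'s from the $2$-residue diagram and hence preserves BG-rank. Your description of the ``even contribution'' as ``partitions into even parts'' is slightly off---the correct set is partitions with all multiplicities even---but your parenthetical and your closing sentence show you have the right object in mind.
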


The proof of Theorem~\ref{unrestrictedBG} comes from the combinatorial bijection of extracting doubly repeating parts to partitions. Let $U_{N,k}$ be the set of partitions with parts less than or equal to $N$ and BG-rank equal to $k$. Let $D_{N,k}$ be the set of partitions into distinct parts less than or equal to $N$ with BG-rank being equal to $k$. Let $E_{N}$ be the set of partitions with parts less than or equal to $N$, whose parts appear an even number of times. Define bijection $\rho:U_{N,k}\rightarrow D_{N,k}\times E_{N}$ where $\rho_{N,k}(\pi)=(\pi',\pi^*)$, where this bijection is established as the row extraction of two parts of same size at once from a given partition repeatedly until there are no more repeating parts in the outcome partition $\pi'$. The extracted parts are collected in the partition $\pi^*$. Note that the number of odd parts in $\pi$ and $\pi'$ might be different $BG(\pi)=BG(\pi')$. Table~\ref{table:TBL3} is an example of this map with $\pi=(7,5,5,5,4,4,2)$, so $\rho_{N,k}(\pi)=(\pi',\pi^*)=((7,5,2),(5,5,4,4))$ with $BG(\pi)=BG(\pi')=0$ for any $N\geq 7$ and $k=0$.

\begin{center} 
\definecolor{cqcqcq}{rgb}{0.75,0.75,0.75}
\begin{table}[htb]\caption{$\rho_{N,k}((7,5,5,5,4,4,2)) = ((7,5,2),(5,5,4,4))$}
\begin{tikzpicture}[line cap=round,line join=round,x=0.5cm,y=0.5cm]
\draw [color=cqcqcq,dash pattern=on 1pt off 1pt, xstep=0.5cm,ystep=0.5cm] ;
\clip(0.5,0.5) rectangle (26,8.5);
\draw [line width=.25pt] (3,1)-- (1,1);
\draw [line width=.25pt] (1,1)-- (1,2);
\draw [line width=.25pt] (5,2)-- (1,2);
\draw [line width=.25pt] (1,3)-- (5,3);
\draw [line width=.25pt] (1,4)-- (5,4);
\draw [line width=.25pt] (1,5)-- (6,5);
\draw [line width=.25pt] (1,6)-- (6,6);
\draw [line width=.25pt] (1,7)-- (6,7);
\draw [line width=.25pt] (6,4)-- (5,4);
\draw [line width=.25pt] (8,8)-- (1,8);
\draw [line width=.25pt] (8,7)-- (6,7);
\draw [line width=.25pt] (1,8)-- (1,2);
\draw [line width=.25pt] (8,8)-- (8,7);
\draw [line width=.25pt] (7,8)-- (7,7);
\draw [line width=.25pt] (6,8)-- (6,7);
\draw [line width=.25pt] (5,8)-- (5,2);
\draw [line width=.25pt] (6,7)-- (6,4);
\draw [line width=.25pt] (4,8)-- (4,2);
\draw [line width=.25pt] (3,8)-- (3,1);
\draw [line width=.25pt] (2,8)-- (2,1);

\draw [line width=.25pt] (10,6)-- (17,6);
\draw [line width=.25pt] (10,5)-- (17,5);
\draw [line width=.25pt] (10,4)-- (15,4);
\draw [line width=.25pt] (10,3)-- (12,3);
\draw [line width=.25pt] (10,3)-- (10,6);
\draw [line width=.25pt] (11,6)-- (11,3);
\draw [line width=.25pt] (12,6)-- (12,3);
\draw [line width=.25pt] (13,6)-- (13,4);
\draw [line width=.25pt] (14,6)-- (14,4);
\draw [line width=.25pt] (15,6)-- (15,4);
\draw [line width=.25pt] (16,6)-- (16,5);
\draw [line width=.25pt] (17,6)-- (17,5);
\draw [line width=.25pt] (19,6)-- (24,6);
\draw [line width=.25pt] (24,5)-- (19,5);
\draw [line width=.25pt] (19,4)-- (24,4);
\draw [line width=.25pt] (19,3)-- (23,3);
\draw [line width=.25pt] (19,2)-- (23,2);
\draw [line width=.25pt] (19,6)-- (19,2);
\draw [line width=.25pt] (24,6)-- (24,4);
\draw [line width=.25pt] (23,2)-- (23,6);
\draw [line width=.25pt] (22,6)-- (22,2);
\draw [line width=.25pt] (21,6)-- (21,2);
\draw [line width=.25pt] (20,6)-- (20,2);

\draw (9,4.5) node[anchor=center] {$\mapsto$};
\draw (18,4.5) node[anchor=center] {\textbf{,}};

\draw (25,4.5) node[anchor=center] {\textbf{.}};


\end{tikzpicture}\label{table:TBL3}
\end{table}
\end{center}

The generating function for number of partitions from the set $E_{2N+\nu}$, where all parts appear an even number of times and are less than or equal to $2N+\nu$ is \[\frac{1}{(q^2;q^2)_{2N+\nu}},\] where $\nu\in\{0,1\}$. Therefore, keeping the bijection $\rho_{2N+\nu,k}$ above in mind, the generating function for number of partitions with BG-rank equal to $k$ and the bound on the largest part being $2N+\nu$ is the product \begin{equation}\label{BGunrestrictedToDistinct}\wTilde{B}_{2N+\nu}(k,q)=\frac{B_{2N+\nu}(k,q)}{(q^2;q^2)_{2N+\nu}},\end{equation} proving Theorem~\ref{unrestrictedBG}. 

The combinatorial interpretation coming from \eqref{distinctBGEQN} was previously unknown. Recall that the expression \[q^{2k^2-k}  \genfrac{[}{]}{0pt}{}{2N+\nu}{N+k}_{q^2},\] is the generating function for number of partitions with BG-rank equal to $k$ and the largest part $\leq 2N+\nu$, where $\nu\in\{0,1\}$. This relation gives a combinatorial explanation of a well known identity:
\begin{theorem}\label{BG_Rogers_Szego} Let $\nu\in\{0,1\}$, and let $N+\nu$ be a positive integer. Then,\begin{equation}\label{BGgen}\sum_{k=-N}^{N+\nu}q^{2k^2-k} \genfrac{[}{]}{0pt}{}{2N+\nu}{N+k}_{q^2}= (-q;q)_{2N+\nu}.\end{equation}\end{theorem}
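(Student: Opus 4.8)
The plan is to read \eqref{BGgen} as a single enumeration carried out two ways: both sides count partitions into distinct parts with largest part at most $2N+\nu$, the right-hand side in bulk and the left-hand side sorted by BG-rank. Almost all of the content has in fact already been isolated in Theorem~\ref{distinctBG}, so the argument will be short, and the genuinely nontrivial input is the $q$-Gauss summation used there.

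First I would recall Euler's theorem that
\[
(-q;q)_{2N+\nu}=\prod_{n=1}^{2N+\nu}(1+q^n)
\]
is exactly the generating function for partitions into distinct parts not exceeding $2N+\nu$. Next, every such partition has a well-defined integer BG-rank $k=i-j$, and the classes obtained by fixing $k$ are disjoint and exhaust the whole family. Reindexing the double sum $\sum_{i,j\geq0}P_{2N+\nu}(i,j,q)$ by $i=j+k$ and recalling the definition \eqref{BGdefinition}, I would therefore obtain
\begin{equation*}
(-q;q)_{2N+\nu}=\sum_{i,j\geq0}P_{2N+\nu}(i,j,q)=\sum_{k\in\mathbb{Z}}\;\sum_{j\geq0}P_{2N+\nu}(j+k,j,q)=\sum_{k\in\mathbb{Z}}B_{2N+\nu}(k,q).
\end{equation*}
Because every partition counted by $P_{2N+\nu}$ is bounded, these are all polynomials and no convergence issue arises.

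Finally I would substitute the closed form of Theorem~\ref{distinctBG}, namely $B_{2N+\nu}(k,q)=q^{2k^2-k}\genfrac{[}{]}{0pt}{}{2N+\nu}{N+k}_{q^2}$, and truncate the range of summation: the coefficient $\genfrac{[}{]}{0pt}{}{2N+\nu}{N+k}_{q^2}$ vanishes unless $0\leq N+k\leq 2N+\nu$, i.e.\ unless $-N\leq k\leq N+\nu$, so the sum over $\mathbb{Z}$ collapses to $\sum_{k=-N}^{N+\nu}$, which is precisely the left-hand side of \eqref{BGgen}. There is no serious obstacle here; the only point needing a word of justification is the bookkeeping in the middle step, namely that fixing the BG-rank really does partition the distinct partitions $\leq 2N+\nu$ into disjoint classes. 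This is immediate since $(i,j)\mapsto(i-j,j)$ is a bijection between the two index sets (with $P_{2N+\nu}(j+k,j,q)=0$ whenever $j+k<0$), so no partition is double-counted or omitted. The entire weight of the theorem thus rests on Theorem~\ref{distinctBG}.
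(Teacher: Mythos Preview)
Your proof is correct and follows exactly the approach taken in the paper: sum the closed form for $B_{2N+\nu}(k,q)$ from Theorem~\ref{distinctBG} over all BG-ranks and identify the result combinatorially as the generating function $(-q;q)_{2N+\nu}$ for partitions into distinct parts $\leq 2N+\nu$. The paper's argument is stated in a single sentence, but your expanded bookkeeping (the truncation of the summation range via the support of the $q$-binomial, and the bijection $(i,j)\mapsto(i-j,j)$) is the same reasoning made explicit.
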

\noindent

\noindent
It is clear that summing $B_N(k,q)$, defined in \eqref{BGdefinition}, over all possible BG-ranks yield the generating function for the number of partitions into distinct parts $\leq N$, yielding \eqref{BGgen}.

Identity \eqref{BGgen} was discussed in \cite[4.2]{Gaussian} by Andrews. He showed that \eqref{BGgen} is equivalent to an identity for Rogers--Szeg\H{o} polynomials. Recall that the Rogers--Szeg\H{o} polynomials are defined as
\begin{align}\label{qhermite}H_N(z,q)&:=\sum_{l=0}^N \genfrac{[}{]}{0pt}{}{N}{l}_{q}z^l.\\ \intertext{Then we have the identity \cite{Gaussian}} 
\label{QHsum}H_{2N+\nu}(q,q^2) &= (-q;q)_{2N+\nu},\end{align}
where $\nu\in\{0,1\}$. In oder to show the equivalence of \eqref{BGgen} and \eqref{QHsum} we use  \begin{equation}\label{chanceofbase}\genfrac{[}{]}{0pt}{}{n+m}{n}_{q^{-1}} = q^{-nm}\genfrac{[}{]}{0pt}{}{n+m}{n}_{q},\end{equation} where  $n$ and $m$ are positive integers.

In \eqref{BGgen}, first we let $q\mapsto q^{-1}$. Then we change the $q$-binomial term using \eqref{chanceofbase} on the left-hand side, and rewrite the right-hand side as \[\left(-1/q;1/q\right)_{2N+\nu} = q^{-(N+\nu)(2N+1)}(-q;q)_{2N+\nu}.\] Multiplying both sides with $q^{(N+\nu)(2N+1)}$ and changing the summation variable in the equation \eqref{BGgen} with $k\mapsto N+(-1)^\nu k +\nu$. In $\nu=1$ case, in addition, we change the order of summation. In this way we arrive at \eqref{QHsum}. 

Theorem~\ref{unrestrictedBG} and Theorem~\ref{BG_Rogers_Szego} is enough to prove the following, combinatorially anticipated, corollary.

\begin{corollary}\label{SumUnrestrictedBG} Let $N$ be a non-negative integer. Then \begin{equation}\label{unrestrictedBGtermWithoutT}\sum_{k=-N}^{N+\nu} \frac{q^{2k^2-k}}{(q^2;q^2)_{N+k}(q^2;q^2)_{N-k+\nu}} = \frac{1}{(q;q)_{2N+\nu}},\end{equation} where $\nu\in\{0,1\}$.
\end{corollary}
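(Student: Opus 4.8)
The plan is to read the left-hand side of \eqref{unrestrictedBGtermWithoutT} as a sum over all BG-ranks and then collapse it with the evaluation already established in Theorem~\ref{BG_Rogers_Szego}. By Theorem~\ref{unrestrictedBG}, each summand on the left is exactly $\wTilde{B}_{2N+\nu}(k,q)$, the generating function for partitions with all parts $\le 2N+\nu$ and BG-rank $k$. Thus the left-hand side is precisely $\sum_{k=-N}^{N+\nu}\wTilde{B}_{2N+\nu}(k,q)$, and the whole problem reduces to evaluating this finite sum.

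First I would invoke the factorization \eqref{BGunrestrictedToDistinct}, which writes $\wTilde{B}_{2N+\nu}(k,q)=B_{2N+\nu}(k,q)/(q^2;q^2)_{2N+\nu}$. Since the denominator does not depend on $k$, it pulls out of the sum, leaving $\frac{1}{(q^2;q^2)_{2N+\nu}}\sum_{k=-N}^{N+\nu}B_{2N+\nu}(k,q)$. Substituting the explicit formula for $B_{2N+\nu}(k,q)$ from Theorem~\ref{distinctBG} turns the inner sum into exactly the left-hand side of \eqref{BGgen}, which by Theorem~\ref{BG_Rogers_Szego} equals $(-q;q)_{2N+\nu}$.

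It then remains to simplify $\frac{(-q;q)_{2N+\nu}}{(q^2;q^2)_{2N+\nu}}$. Using the elementary splitting $(q^2;q^2)_{m}=(q;q)_m(-q;q)_m$, which follows from $1-q^{2j}=(1-q^j)(1+q^j)$, with $m=2N+\nu$, the factor $(-q;q)_{2N+\nu}$ cancels and one is left with $\frac{1}{(q;q)_{2N+\nu}}$, the asserted right-hand side.

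I do not anticipate a genuine obstacle here; the substance of the corollary is carried entirely by Theorem~\ref{BG_Rogers_Szego}, and the remaining manipulations are bookkeeping. The only point deserving a word of justification is that the range $-N\le k\le N+\nu$ is the complete set of BG-ranks attainable by partitions with parts $\le 2N+\nu$---which follows because the bijection $\rho$ preserves BG-rank and $D_{N,k}$ is empty outside this range---so that summing $\wTilde{B}_{2N+\nu}(k,q)$ over it really does recover the generating function $1/(q;q)_{2N+\nu}$ for \emph{all} such partitions. This is the combinatorial reason the identity was anticipated, and it agrees with the algebraic computation above.
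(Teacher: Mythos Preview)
Your proof is correct and follows the same route the paper indicates: it uses Theorem~\ref{unrestrictedBG} (together with \eqref{BGunrestrictedToDistinct}) to rewrite each summand as $B_{2N+\nu}(k,q)/(q^2;q^2)_{2N+\nu}$, and then applies Theorem~\ref{BG_Rogers_Szego} to evaluate the resulting sum, with the final step $(-q;q)_{2N+\nu}/(q^2;q^2)_{2N+\nu}=1/(q;q)_{2N+\nu}$ being the standard factorization.
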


\section{Some Implications of the formula for Rogers--Szeg\H{o} polynomials}\label{section4}

Let $q$, $t$, and $z$ be variables. It is obvious that the double sum
\begin{equation}\label{Double}
\sum_{i,j\geq 0} P_{N} (i,j,q)t^iz^j
\end{equation}
is the generating function for the number of partitions into distinct parts, where exponents of $t$ and $z$ keep track of the odd-indexed and the even-indexed odd parts, respectively. We would like to show that the double sum \eqref{Double} can be written as a single-fold sum.
\begin{theorem}\label{DoubletoSingle} Let $N$ be a non-negative integer. Then
\[\sum_{i,j\geq 0} P_{2N+\nu} (i,j,q)t^iz^j = \sum_{i\geq 0} \genfrac{[}{]}{0pt}{}{N}{i}_{q^4} (-qt;q^4)_{N-i+\nu}(-qz;q^4)_{i} q^{2i} ,\] where $\nu\in\{0,1\}.$
\end{theorem}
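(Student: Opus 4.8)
The plan is to prove the identity by showing that both sides satisfy the same recurrence in the subscript, matched against a common initial value; the machinery of Lemma~\ref{RecRels} is tailor-made for this. Write $F_{2N+\nu}(t,z)$ for the left-hand double sum and $G_{2N+\nu}(t,z)$ for the claimed right-hand single sum. First I would convert the combinatorial recurrence of Lemma~\ref{RecRels} into a recurrence for $F$. Multiplying \eqref{recursion} by $t^iz^j$ and summing over $i,j\ge 0$, the second term contains $P_{2N+\nu-1}(j,i-\nu,q)$; substituting $a=i-\nu$ and relabeling so that the first argument of $P$ is tracked by $z$ and the second by $t$, the swap $(i,j)\mapsto(j,i-\nu)$ turns this term into the $t\leftrightarrow z$ reflected series (with one extra factor of $t$ when $\nu=1$, coming from the shift $i\mapsto i-1$). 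This yields
\[F_{2N}(t,z)=F_{2N-1}(t,z)+q^{2N}F_{2N-1}(z,t),\qquad F_{2N+1}(t,z)=F_{2N}(t,z)+q^{2N+1}\,t\,F_{2N}(z,t),\]
valid for $N\ge 1$ and $N\ge 0$ respectively, with $F_0(t,z)=1$ forced by the initial condition $P_0(i,j,q)=\delta_{i,0}\delta_{j,0}$.

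It then suffices to check that $G$ satisfies these same two recurrences and that $G_0(t,z)=1$; the latter is immediate since only the $i=0$ term survives. Because these relations determine every $F_{2N+\nu}$ from $F_0$ along the single chain $F_0\to F_1\to F_2\to\cdots$, an induction on the subscript $2N+\nu$ will finish the argument once both recurrence checks are in hand.

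The verification of the even recurrence rests on the $q^4$-Pascal rule $\genfrac{[}{]}{0pt}{}{N}{i}_{q^4}=\genfrac{[}{]}{0pt}{}{N-1}{i}_{q^4}+q^{4(N-i)}\genfrac{[}{]}{0pt}{}{N-1}{i-1}_{q^4}$. The first summand reproduces $G_{2N-1}(t,z)$ verbatim, while in the second I would shift $i\mapsto i+1$ and then apply the reflection $i\mapsto N-1-i$, using the symmetry $\genfrac{[}{]}{0pt}{}{N-1}{i}_{q^4}=\genfrac{[}{]}{0pt}{}{N-1}{N-1-i}_{q^4}$; this exchanges the lengths of the two $q$-Pochhammer factors and, after matching the powers of $q$, recovers exactly $q^{2N}G_{2N-1}(z,t)$. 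For the odd recurrence I would peel off the last factor via $(-qt;q^4)_{N-i+1}=(-qt;q^4)_{N-i}\bigl(1+q^{4(N-i)+1}t\bigr)$: the contribution of the $1$ gives $G_{2N}(t,z)$, and the contribution of $q^{4(N-i)+1}t$, after the reflection $i\mapsto N-i$, collapses to $q^{2N+1}\,t\,G_{2N}(z,t)$.

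The one genuine obstacle is the built-in asymmetry of $G$ between the two factors $(-qt;q^4)_{N-i}$ and $(-qz;q^4)_i$: the raw recurrences relate a series in which $t$ carries the ``long'' Pochhammer factor to one in which $z$ does, so a term-by-term comparison fails outright. The reflection $i\mapsto N-i$ (respectively $i\mapsto N-1-i$ in the even case), combined with binomial symmetry, is precisely the involution that interchanges these roles while correctly rebalancing the exponent of $q$; once this is recognized, both checks reduce to routine bookkeeping and the induction closes.
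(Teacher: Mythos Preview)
Your proof is correct, and it takes a genuinely different route from the paper's argument. The paper proves Theorem~\ref{DoubletoSingle} by expanding the two Pochhammer factors on the right via the $q$-binomial theorem, extracting the coefficient of $t^iz^j$, and then identifying this coefficient with $P_{2N+\nu}(i,j,q)$ by invoking the explicit formulas \eqref{Peven}--\eqref{Podd} together with the auxiliary identity Theorem~\ref{New5.2} (which in turn rests on the Rogers--Szeg\H{o} evaluation \eqref{QHsum}). Your argument bypasses all of that: you lift the recurrence of Lemma~\ref{RecRels} to the level of the two-variable generating series, and verify directly that the single sum $G_{2N+\nu}(t,z)$ satisfies the same recurrence and initial condition, using only the $q^4$-Pascal rule and the reflection $i\mapsto N-i$ (or $N-1-i$) to swap the roles of $t$ and $z$. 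This is more elementary and self-contained --- in particular you never need the closed forms of Theorem~\ref{PgenFunc} or the Rogers--Szeg\H{o} identity. What the paper's approach buys, on the other hand, is Theorem~\ref{New5.2} itself, which is of independent interest (it also supplies a manifestly polynomial expression for the right side of \eqref{Podd}) and which ties the result into the Rogers--Szeg\H{o} framework central to Sections~\ref{section4}--\ref{section5}.
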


\noindent Proving this theorem relies on the following new identity.

\begin{theorem}\label{New5.2} For non-negative integers $N$, $i$, $j$ and a variable $q$
\begin{equation}
\label{new5.2} \sum_{l=0} ^N \genfrac{[}{]}{0pt}{}{N}{l}_{q^2} \genfrac{[}{]}{0pt}{}{l+\nu}{i}_{q^2}\genfrac{[}{]}{0pt}{}{N-l}{j}_{q^2} q^{N-l-j} = (-q;q)_{N-i-j+\nu} \genfrac{[}{]}{0pt}{}{N+\nu}{i,\ j}_{q^2}\frac{1-\nu q^{N+i-j+1}}{1-\nu q^{2(N+1)}},\end{equation} where $\nu\in\{0,1\}$.
\end{theorem}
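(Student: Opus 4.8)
The plan is to reduce both cases $\nu\in\{0,1\}$ of \eqref{new5.2} to the Rogers--Szeg\H{o} evaluation \eqref{QHsum}, which is already available from Section~\ref{section3}.

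First I would treat the case $\nu=0$. I would begin by contracting the product $\genfrac{[}{]}{0pt}{}{N}{l}_{q^2}\genfrac{[}{]}{0pt}{}{l}{i}_{q^2}$ via the subset-of-a-subset identity $\genfrac{[}{]}{0pt}{}{N}{l}_{q^2}\genfrac{[}{]}{0pt}{}{l}{i}_{q^2}=\genfrac{[}{]}{0pt}{}{N}{i}_{q^2}\genfrac{[}{]}{0pt}{}{N-i}{l-i}_{q^2}$, which pulls $\genfrac{[}{]}{0pt}{}{N}{i}_{q^2}$ outside the sum. After the shift $l=i+m$, I would use the symmetry of the $q^2$-trinomial coefficient, $\genfrac{[}{]}{0pt}{}{N-i}{m}_{q^2}\genfrac{[}{]}{0pt}{}{N-i-m}{j}_{q^2}=\genfrac{[}{]}{0pt}{}{N-i}{j}_{q^2}\genfrac{[}{]}{0pt}{}{N-i-j}{m}_{q^2}$, to factor the summand as $\genfrac{[}{]}{0pt}{}{N}{i,\ j}_{q^2}$ times $\genfrac{[}{]}{0pt}{}{N-i-j}{m}_{q^2}q^{(N-i-j)-m}$. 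Writing $A=N-i-j$ and reflecting $m\mapsto A-m$ turns the remaining sum into $\sum_{m=0}^{A}\genfrac{[}{]}{0pt}{}{A}{m}_{q^2}q^{m}=H_{A}(q,q^2)$, which equals $(-q;q)_{A}$ by \eqref{QHsum}. This is exactly the right side of \eqref{new5.2} for $\nu=0$.

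Next I would handle $\nu=1$ by reducing it to the case just settled. Splitting the middle factor using the $q$-Pascal rule $\genfrac{[}{]}{0pt}{}{l+1}{i}_{q^2}=q^{2i}\genfrac{[}{]}{0pt}{}{l}{i}_{q^2}+\genfrac{[}{]}{0pt}{}{l}{i-1}_{q^2}$ expresses the $\nu=1$ left side as a sum of two $\nu=0$ instances, with index pairs $(i,j)$ and $(i-1,j)$. Substituting the $\nu=0$ result collapses the left side to
\[q^{2i}(-q;q)_{N-i-j}\genfrac{[}{]}{0pt}{}{N}{i,\ j}_{q^2}+(-q;q)_{N-i-j+1}\genfrac{[}{]}{0pt}{}{N}{i-1,\ j}_{q^2}.\]
It then remains to match this against the $\nu=1$ right side of \eqref{new5.2}. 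Dividing through by the common factor $(-q;q)_{N-i-j}\genfrac{[}{]}{0pt}{}{N+1}{i,\ j}_{q^2}/(1-q^{2(N+1)})$ and writing $a=q^{N-i-j+1}$, $b=q^{2i}$ (so that $q^{N+i-j+1}=ab$), the claim reduces to the elementary polynomial identity $b(1-a^2)+(1+a)(1-b)=(1+a)(1-ab)$, which is immediate.

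The only delicate point is the bookkeeping in the $\nu=1$ step: one must track the two Pochhammer lengths $(-q;q)_{N-i-j}$ and $(-q;q)_{N-i-j+1}=(-q;q)_{N-i-j}(1+q^{N-i-j+1})$ and the binomial ratios arising when $\genfrac{[}{]}{0pt}{}{N}{i,\ j}_{q^2}$, $\genfrac{[}{]}{0pt}{}{N}{i-1,\ j}_{q^2}$ and $\genfrac{[}{]}{0pt}{}{N+1}{i,\ j}_{q^2}$ are brought to a common denominator, together with the degenerate cases ($i=0$, or $N<i+j$) in which individual terms vanish. Once these are dispatched, the factorization $(1+a)(1-ab)$ makes the cancellation transparent. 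Finally, Theorem~\ref{DoubletoSingle} follows from \eqref{new5.2}: expanding $(-qt;q^4)_{N-i+\nu}$ and $(-qz;q^4)_{i}$ on its right side by the finite $q$-binomial theorem and reading off the coefficient of $t^iz^j$ reproduces \eqref{new5.2} under $q\mapsto q^2$, multiplied by the prefactor $q^{2i^2-i+2j^2+j}$ of the explicit formulas in Theorem~\ref{PgenFunc}.
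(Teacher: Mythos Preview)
Your argument is correct. For $\nu=0$ it is essentially the paper's proof: the paper divides both sides of \eqref{new5.2} by $\genfrac{[}{]}{0pt}{}{N+\nu}{i,\ j}_{q^2}/(1-\nu q^{2(N+1)})$, simplifies the factorials, and recognizes the resulting sum as $H_{N-i-j+\nu}(q,q^2)$, which is exactly what your subset-of-a-subset/trinomial-symmetry manipulation accomplishes.

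For $\nu=1$ the two routes diverge slightly. The paper proceeds uniformly: after the same division and the shift $m=l+1-i$, the summand carries an extra factor $1-q^{2(m+i)}$, and the sum splits as $H_{A+1}(q,q^2)-q^{A+1+2i}H_{A+1}(q,q^2)=(-q;q)_{A+1}\bigl(1-q^{N+i-j+1}\bigr)$ with $A=N-i-j$, matching the right side directly. You instead invoke the $q$-Pascal recursion to reduce to two $\nu=0$ instances and then verify the polynomial identity $b(1-a^2)+(1+a)(1-b)=(1+a)(1-ab)$. Both derivations are short; the paper's has the advantage of treating the two values of $\nu$ in a single stroke, while yours makes the appearance of the factor $1-q^{N+i-j+1}$ perhaps more transparent and avoids manipulating the awkward $(1-q^{2(l+1)})$ that comes from $\genfrac{[}{]}{0pt}{}{l+1}{i}_{q^2}$ in the factorial cancellation. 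Your closing remark on deducing Theorem~\ref{DoubletoSingle} matches the paper's derivation verbatim.
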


We prove these identities by appeal to Rogers--Szeg\H{o} polynomial summation formula \eqref{QHsum}. We divide both sides of \eqref{new5.2} with \[\genfrac{[}{]}{0pt}{}{N+\nu}{i,\ j}_{q^2}\frac{1}{1-\nu q^{2(N+1)}}\] and simplify the factorials. Using, \[H_{N-i-j+\nu}(q,q^2) = (-q;q)_{N-i-j+\nu}\hspace{.3mm} , \] from \eqref{QHsum}, for $\nu\in\{0,1\}$ we get the final result. 

The identity \eqref{new5.2} with $\nu=1$ provides an explicit polynomial formula for \eqref{Podd}. Furthermore, \eqref{new5.2} is enough to demonstrate Theorem~\ref{DoubletoSingle}. Let $\llfloor t^iz^j\rrfloor$ denote the coefficient of $t^iz^j$ term of the series this notation precedes.

\begin{proof}(Theorem~\ref{DoubletoSingle}) Use the $q$-binomial theorem, \cite[(II.3)]{GasperRahman}, to explicitly write the powers of $t$ and $z$. Extracting the coefficient of $t^iz^j$ we get\[\llfloor t^iz^j\rrfloor \sum_{l\geq 0} \genfrac{[}{]}{0pt}{}{N}{l}_{q^4} (-qt;q^4)_{N-l+\nu}(-qz;q^4)_{l} q^{2l} = q^{\omega(i,j)}\sum_{l=0} ^N \genfrac{[}{]}{0pt}{}{N}{l}_{q^4} \genfrac{[}{]}{0pt}{}{l+\nu}{i}_{q^4}\genfrac{[}{]}{0pt}{}{N-l}{j}_{q^4} q^{2(N-l-j)}, \] where $\omega(i,j)={2i^2-i+2j^2+j}$ and $\nu\in\{0,1\}$. This gives us the result by comparing \eqref{new5.2} with $q^2\mapsto q^4$ and equalities \eqref{Peven} and \eqref{Podd}.
\end{proof} 

Theorem~\ref{DoubletoSingle} yields interesting corollaries. Setting $(t,z)=(0,1)$ and $(1,0)$ gives us the new $q$-series identities of Corollary~\ref{corollarySSGENERAL}.

\begin{corollary}\label{corollarySSGENERAL} Let $N$ and $k$ be non-negative integers. Then
\begin{align}
\label{qNewLittleGollnitz1}\sum_{k\geq 0}  \genfrac{[}{]}{0pt}{}{N+\nu}{k}_{q^4} (-q^2;q^2)_{N-k+\nu} q^{2k^2+k}\frac{1-\nu q^{2(N-k+1)}}{1-\nu q^{4(N+1)}}&= \sum_{k\geq0}  \genfrac{[}{]}{0pt}{}{N}{k}_{q^4} (-q;q^4)_k q^{2k},\\
\label{qNewLittleGollnitz2}\sum_{k\geq 0}  \genfrac{[}{]}{0pt}{}{N+\nu}{k}_{q^4} (-q^2;q^2)_{N-k+\nu}q^{2k^2-k}\frac{1-\nu q^{2(N+k+1)}}{1-\nu q^{4(N+1)}} &= \sum_{k\geq0}  \genfrac{[}{]}{0pt}{}{N}{k}_{q^4} (-q;q^4)_{N-k+\nu} q^{2k},
\end{align} where $\nu\in\{0,1\}$.
\end{corollary}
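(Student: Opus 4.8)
The plan is to derive both identities as the two ``corner'' specializations $(t,z)=(0,1)$ and $(t,z)=(1,0)$ of the master identity in Theorem~\ref{DoubletoSingle}. The left-hand side of that theorem is the generating function $\sum_{i,j\geq 0}P_{2N+\nu}(i,j,q)t^iz^j$, so, with the convention $0^0=1$, setting $t=0$ annihilates every term with $i\geq 1$ and leaves $\sum_{j\geq 0}P_{2N+\nu}(0,j,q)$, while setting $z=0$ leaves $\sum_{i\geq 0}P_{2N+\nu}(i,0,q)$. On the right-hand side the matching Pochhammer factor becomes trivial: $(-qt;q^4)_{N-i+\nu}$ reduces to $1$ at $t=0$, and $(-qz;q^4)_{i}$ reduces to $1$ at $z=0$. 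Hence the $(0,1)$ specialization immediately yields the right-hand side $\sum_{k\geq0}\genfrac{[}{]}{0pt}{}{N}{k}_{q^4}(-q;q^4)_{k}q^{2k}$ of \eqref{qNewLittleGollnitz1}, and the $(1,0)$ specialization yields the right-hand side $\sum_{k\geq0}\genfrac{[}{]}{0pt}{}{N}{k}_{q^4}(-q;q^4)_{N-k+\nu}q^{2k}$ of \eqref{qNewLittleGollnitz2}.

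It remains to put the two surviving left-hand sums into the closed form claimed in the corollary, and for this I would appeal to Theorem~\ref{PgenFunc}. The essential simplification is that a $q$-trinomial coefficient with a vanishing lower entry collapses to a $q$-binomial coefficient, namely $\genfrac{[}{]}{0pt}{}{N+\nu}{0,\ j}_{q^4}=\genfrac{[}{]}{0pt}{}{N+\nu}{j}_{q^4}$ and $\genfrac{[}{]}{0pt}{}{N+\nu}{i,\ 0}_{q^4}=\genfrac{[}{]}{0pt}{}{N+\nu}{i}_{q^4}$. Carrying the $\nu$-dependent prefactors of \eqref{Peven} and \eqref{Podd} through uniformly, one records
\[P_{2N+\nu}(0,j,q)=q^{2j^2+j}(-q^2;q^2)_{N-j+\nu}\genfrac{[}{]}{0pt}{}{N+\nu}{j}_{q^4}\frac{1-\nu q^{2(N-j+1)}}{1-\nu q^{4(N+1)}},\]
\[P_{2N+\nu}(i,0,q)=q^{2i^2-i}(-q^2;q^2)_{N-i+\nu}\genfrac{[}{]}{0pt}{}{N+\nu}{i}_{q^4}\frac{1-\nu q^{2(N+i+1)}}{1-\nu q^{4(N+1)}},\]
where at $\nu=0$ the rational factor equals $1$ and these agree with \eqref{Peven} at $i=0$ and $j=0$. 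Summing the first over $j$ and the second over $i$ reproduces verbatim the left-hand sides of \eqref{qNewLittleGollnitz1} and \eqref{qNewLittleGollnitz2}, completing the proof.

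Since every step is a direct substitution, I do not anticipate a genuine obstacle, only careful bookkeeping. The one delicate point is the rational factor inherited from \eqref{Podd}: one must check that it degenerates to $1$ at $\nu=0$ and that its numerator exponent is assigned to the correct identity, the $(0,1)$ specialization carrying $2(N-j+1)$ and the $(1,0)$ specialization carrying $2(N+i+1)$. A secondary consistency check is that the convention $0^0=1$ is the intended one, so that the $i=0$ (respectively $j=0$) term survives rather than being discarded; this is the standard reading of the double generating function \eqref{Double}.
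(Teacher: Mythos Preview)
Your proposal is correct and follows exactly the paper's own derivation: the paper states that Corollary~\ref{corollarySSGENERAL} arises from Theorem~\ref{DoubletoSingle} by setting $(t,z)=(0,1)$ and $(1,0)$, and your write-up carries out precisely these substitutions, using Theorem~\ref{PgenFunc} to unpack the left-hand side. Your attention to the collapse of the $q$-trinomial into a $q$-binomial and to the $\nu$-dependent rational factor is the only bookkeeping required, and you have handled it correctly.
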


It is amusing to observe that there is no dependence on $\nu$ on the right-hand side of \eqref{qNewLittleGollnitz1}. With a later theorem in Section~\ref{section6}, we can also prove the following.

\begin{theorem} Let $N$ be a non-negative integer, then
\begin{equation}\label{connect} \sum_{k\geq0}  \genfrac{[}{]}{0pt}{}{N}{k}_{q^4} (-q;q^4)_k q^{2k} = \sum_{k\geq0} \genfrac{[}{]}{0pt}{}{N}{k}_{q^4} (-q^3;q^4)_{N-k} q^{2k}.\end{equation}
\end{theorem}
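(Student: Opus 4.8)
The plan is to obtain both sides of \eqref{connect} as specializations of the single-fold sum in Theorem~\ref{DoubletoSingle}, and then to recognize the two resulting generating functions as termwise identical through the explicit formula \eqref{Peven}. Throughout I take $\nu=0$ in Theorem~\ref{DoubletoSingle}, so that the working identity is
\[
\sum_{i,j\geq 0} P_{2N}(i,j,q)\,t^i z^j = \sum_{i\geq 0} \genfrac{[}{]}{0pt}{}{N}{i}_{q^4}(-qt;q^4)_{N-i}(-qz;q^4)_i\, q^{2i}.
\]
Since both sides are polynomials in $t$ and $z$ over the power series ring in $q$, I am free to substitute numerical values of $t$ and $z$.

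First I would set $(t,z)=(0,1)$. On the right, $(-qt;q^4)_{N-i}$ collapses to $1$ while $(-qz;q^4)_i$ becomes $(-q;q^4)_i$, so the right-hand side is exactly the left-hand side of \eqref{connect}; on the left only the $i=0$ terms survive, leaving $\sum_{j\geq 0}P_{2N}(0,j,q)$. Next I would set $(t,z)=(q^2,0)$: now $(-qt;q^4)_{N-i}=(-q^3;q^4)_{N-i}$ and $(-qz;q^4)_i=1$, so the right-hand side becomes the right-hand side of \eqref{connect}, while the left-hand side collapses (only $j=0$ survives) to $\sum_{i\geq 0}P_{2N}(i,0,q)\,q^{2i}$. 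In this way \eqref{connect} is reduced to the single assertion
\[
\sum_{j\geq 0} P_{2N}(0,j,q) = \sum_{i\geq 0} P_{2N}(i,0,q)\,q^{2i}.
\]

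To finish I would invoke \eqref{Peven}, which gives $P_{2N}(0,i,q)=q^{2i^2+i}(-q^2;q^2)_{N-i}\genfrac{[}{]}{0pt}{}{N}{i}_{q^4}$ and $P_{2N}(i,0,q)=q^{2i^2-i}(-q^2;q^2)_{N-i}\genfrac{[}{]}{0pt}{}{N}{i}_{q^4}$, whence the clean termwise relation $P_{2N}(0,i,q)=q^{2i}P_{2N}(i,0,q)$ holds; summing over the index then matches the two sums above and proves \eqref{connect}. There is no genuinely hard step here once the two substitutions are spotted: the conceptual heart is the termwise identity $P_{2N}(0,i,q)=q^{2i}P_{2N}(i,0,q)$, which says that weighting each odd-indexed odd part by $q^2$ turns a partition into distinct parts $\le 2N$ with no even-indexed odd parts into one with no odd-indexed odd parts. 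I expect the only real obstacle to be that establishing this last relation \emph{bijectively}, on the partition sets directly, is considerably harder than the one-line algebraic verification from \eqref{Peven}; this explains why the authors route the result through the machinery of Section~\ref{section6}, whereas the identities \eqref{Peven} and Theorem~\ref{DoubletoSingle} already available here suffice for a short proof.
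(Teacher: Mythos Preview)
Your argument is correct. Both substitutions in Theorem~\ref{DoubletoSingle} with $\nu=0$ produce exactly the two sides of \eqref{connect}, and the termwise identity $P_{2N}(0,i,q)=q^{2i}\,P_{2N}(i,0,q)$ follows immediately from \eqref{Peven} since $2i^2+i=(2i^2-i)+2i$ and $\genfrac{[}{]}{0pt}{}{N}{0,\,i}_{q^4}=\genfrac{[}{]}{0pt}{}{N}{i,\,0}_{q^4}$. The reduction step and the verification are both sound.

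The route, however, is genuinely different from the paper's. The paper obtains \eqref{connect} by specializing $(a,b,c,d)=(qt,q/t,qz,q/z)$ and then $(t,z)=(0,1)$ in \emph{two} distinct single-sum representations of the same object $\Psi_{2N}(a,b,c,d)$: the Ishikawa--Zeng formula \eqref{PSI2Nnu} (equivalently \eqref{genFuncDistinct}) yields the left side of \eqref{connect}, while the companion formula \eqref{finiteBouletPSI} derived in Section~\ref{section6} yields the right side, so one specialization suffices. You instead use only the formula from Theorem~\ref{DoubletoSingle} (i.e.\ \eqref{PSI2Nnu}) at \emph{two} different specializations, $(t,z)=(0,1)$ and $(t,z)=(q^2,0)$, and close the loop on the generating-function side via \eqref{Peven}. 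Your approach has the advantage of staying entirely within Sections~\ref{section2} and~\ref{section4}, avoiding the bijection $\rho^*_N$ and Theorem~\ref{finiteBoulet} altogether; the paper's approach, on the other hand, exhibits \eqref{connect} as a direct witness that \eqref{PSI2Nnu} and \eqref{finiteBouletPSI} are two faces of the same polynomial, which is the structural point Section~\ref{section6} is making.
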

This theorem is easily proven by choosing $(a,b,c,d) = (qt,q/t,qz,q/z)$ in \eqref{finiteBouletPSI} and setting $(t,z)=(0,1)$. Note that, the right-hand sides of \eqref{qNewLittleGollnitz2} and \eqref{connect} are very similar. In fact, the choice $(a,b,c,d) = (qt,q/t,qz,q/z)$ and then $(t,z)=(1,0)$ in \eqref{finiteBouletPSI} yields the right-hand side of \eqref{qNewLittleGollnitz2}. Johann Cigler brought to our attention that \eqref{connect} is an easy corollary of a more general identity.

\begin{theorem} Let $N$ be a non-negative integer, then 
\begin{equation*} \sum_{k\geq0}  \genfrac{[}{]}{0pt}{}{N}{k}_{q} (y;q)_k z^{k} = \sum_{k\geq0} \genfrac{[}{]}{0pt}{}{N}{k}_{q} (yz;q)_{N-k} z^{k}.\end{equation*}
\end{theorem}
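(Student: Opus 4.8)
The plan is to establish this as an identity of polynomials in $y$ and $z$ (with coefficients in $\mathbb{Z}[q]$) by expanding both finite $q$-Pochhammer symbols via the terminating $q$-binomial theorem \cite{GasperRahman}, interchanging the two resulting finite summations, and checking that each side collapses to one and the same double sum. Write $L$ and $R$ for the left- and right-hand sides of the asserted identity. Since $N$ is a fixed non-negative integer, every sum here is finite, so no convergence or rearrangement concern arises.

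First I would treat $L$. Using the standard finite expansion $(y;q)_k=\sum_{j=0}^{k}\genfrac{[}{]}{0pt}{}{k}{j}_q q^{\binom{j}{2}}(-y)^j$, I substitute it into $L$ and apply the ``subset of a subset'' relation $\genfrac{[}{]}{0pt}{}{N}{k}_q\genfrac{[}{]}{0pt}{}{k}{j}_q=\genfrac{[}{]}{0pt}{}{N}{j,\ k-j}_q=\genfrac{[}{]}{0pt}{}{N}{j}_q\genfrac{[}{]}{0pt}{}{N-j}{k-j}_q$, which is immediate from the definition of the $q$-trinomial coefficient. After swapping the order of summation (letting $j$ run over $0,\dots,N$ and $k$ over $j,\dots,N$) and reindexing $m=k-j$, the factor $\genfrac{[}{]}{0pt}{}{N}{j}_q$ pulls out in front, and since $(-y)^j z^j=(-yz)^j$ one obtains
\[ L = \sum_{j\geq 0} \genfrac{[}{]}{0pt}{}{N}{j}_q q^{\binom{j}{2}} (-yz)^j \sum_{m\geq 0} \genfrac{[}{]}{0pt}{}{N-j}{m}_q z^m. \]

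Next I would run the identical computation on $R$. Expanding $(yz;q)_{N-k}=\sum_{j=0}^{N-k}\genfrac{[}{]}{0pt}{}{N-k}{j}_q q^{\binom{j}{2}}(-yz)^j$ and using the companion factorization $\genfrac{[}{]}{0pt}{}{N}{k}_q\genfrac{[}{]}{0pt}{}{N-k}{j}_q=\genfrac{[}{]}{0pt}{}{N}{k,\ j}_q=\genfrac{[}{]}{0pt}{}{N}{j}_q\genfrac{[}{]}{0pt}{}{N-j}{k}_q$, the same interchange of summations yields
\[ R = \sum_{j\geq 0} \genfrac{[}{]}{0pt}{}{N}{j}_q q^{\binom{j}{2}} (-yz)^j \sum_{k\geq 0} \genfrac{[}{]}{0pt}{}{N-j}{k}_q z^k, \]
which is verbatim the expression obtained for $L$. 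Hence $L=R$.

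I do not expect any genuine obstacle here: the whole argument is routine algebraic bookkeeping, and the only point worth flagging is that the inner Rogers--Szeg\H{o}-type sum $\sum_{m}\genfrac{[}{]}{0pt}{}{N-j}{m}_q z^m$ never needs to be evaluated in closed form, since it occurs identically on both sides. The mild subtlety is simply that on the $L$ side the separate factors $(-y)^j$ and $z^j$ must be recombined into $(-yz)^j$ so as to match the bare $(-yz)^j$ appearing on the $R$ side. An alternative, equally short route would be induction on $N$ using the two $q$-Pascal recurrences, but the twofold expansion above is cleaner and transparently explains why the identity holds.
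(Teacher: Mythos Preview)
Your argument is correct and is precisely a fleshed-out version of what the paper indicates: the paper merely states that the identity ``can easily be proven by appeal to $q$-binomial theorem \cite[II.3]{GasperRahman}'' without further detail, and your twofold expansion of $(y;q)_k$ and $(yz;q)_{N-k}$ via the terminating $q$-binomial theorem, followed by the $q$-trinomial refactoring and interchange of sums, is exactly the natural execution of that hint. Nothing more is needed.
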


\noindent
This theorem can easily be proven by appeal to $q$-binomial theorem \cite[II.3]{GasperRahman}.

\section{Partitions with Boulet-Stanley weights}\label{section5}

In \cite{Stanley}, Stanley made a suggestion for suitable weights to be used on diagrams. Boulet, in \cite{Boulet}, extensively utilized this suggestion on what we will denote a four-variable decoration of Ferrers diagram of a partition. We can decorate any Ferrers diagram of a partition with variables $a$, $b$, $c$, and $d$. We fill the boxes on the odd-indexed rows with alternating variables $a$ and $b$ starting from $a$, and boxes on the even-indexed rows filled with alternating variables $c$ and $d$ starting from $c$. 

One can define a weight on these four-variable decorated diagrams as \[\omega_\pi(a,b,c,d) = a^{\#a}b^{\#b}c^{\#c}d^{\#d},\] where $\#a$ denotes the number of boxes decorated with variable $a$ in partition $\pi$'s  four-variable decorated diagram, etc. One example of a four-variable decoration and the weight is given in Table~\ref{table:TBL4}.

\begin{center}
\begin{table}[htb]\caption{Four-variable decorated Ferrers diagram of the partition $\pi=(12,10,7,5,2)$}
\begin{tikzpicture}[line cap=round,line join=round,x=0.5cm,y=0.5cm]
\clip(0.5,0) rectangle (14.5,6.5);
\draw [line width=.25pt] (2,1)-- (4,1);
\draw [line width=.25pt] (2,2)-- (7,2);
\draw [line width=.25pt] (2,3)-- (9,3);
\draw [line width=.25pt] (2,4)-- (12,4);
\draw [line width=.25pt] (14,5)-- (2,5);
\draw [line width=.25pt] (2,6)-- (14,6);
\draw [line width=.25pt] (14,6)-- (14,5);
\draw [line width=.25pt] (13,6)-- (13,5);
\draw [line width=.25pt] (12,6)-- (12,4);
\draw [line width=.25pt] (11,6)-- (11,4);
\draw [line width=.25pt] (10,6)-- (10,4);
\draw [line width=.25pt] (9,6)-- (9,3);
\draw [line width=.25pt] (8,6)-- (8,3);
\draw [line width=.25pt] (7,2)-- (7,6);
\draw [line width=.25pt] (6,6)-- (6,2);
\draw [line width=.25pt] (5,6)-- (5,2);
\draw [line width=.25pt] (4,1)-- (4,6);
\draw [line width=.25pt] (3,6)-- (3,1);
\draw [line width=.25pt] (2,6)-- (2,1);
\draw (2.5,5.5) node[anchor=center] {$a$};
\draw (3.5,5.5) node[anchor=center] {$b$};
\draw (4.5,5.5) node[anchor=center] {$a$};
\draw (5.5,5.5) node[anchor=center] {$b$};
\draw (6.5,5.5) node[anchor=center] {$a$};
\draw (7.5,5.5) node[anchor=center] {$b$};
\draw (8.5,5.5) node[anchor=center] {$a$};
\draw (9.5,5.5) node[anchor=center] {$b$};
\draw (10.5,5.5) node[anchor=center] {$a$};
\draw (11.5,5.5) node[anchor=center] {$b$};
\draw (12.5,5.5) node[anchor=center] {$a$};
\draw (13.5,5.5) node[anchor=center] {$b$};

\draw (2.5,4.5) node[anchor=center] {$c$};
\draw (3.5,4.5) node[anchor=center] {$d$};
\draw (4.5,4.5) node[anchor=center] {$c$};
\draw (5.5,4.5) node[anchor=center] {$d$};
\draw (6.5,4.5) node[anchor=center] {$c$};
\draw (7.5,4.5) node[anchor=center] {$d$};
\draw (8.5,4.5) node[anchor=center] {$c$};
\draw (9.5,4.5) node[anchor=center] {$d$};
\draw (10.5,4.5) node[anchor=center] {$c$};
\draw (11.5,4.5) node[anchor=center] {$d$};

\draw (2.5,3.5) node[anchor=center] {$a$};
\draw (3.5,3.5) node[anchor=center] {$b$};
\draw (4.5,3.5) node[anchor=center] {$a$};
\draw (5.5,3.5) node[anchor=center] {$b$};
\draw (6.5,3.5) node[anchor=center] {$a$};
\draw (7.5,3.5) node[anchor=center] {$b$};
\draw (8.5,3.5) node[anchor=center] {$a$};

\draw (2.5,2.5) node[anchor=center] {$c$};
\draw (3.5,2.5) node[anchor=center] {$d$};
\draw (4.5,2.5) node[anchor=center] {$c$};
\draw (5.5,2.5) node[anchor=center] {$d$};
\draw (6.5,2.5) node[anchor=center] {$c$};

\draw (2.5,1.5) node[anchor=center] {$a$};
\draw (3.5,1.5) node[anchor=center] {$b$};

\draw (6.5,0.3) node[anchor=center] {$\omega_\pi(a,b,c,d) = a^{11}b^{10}c^{8}d^7$};
\end{tikzpicture}\label{table:TBL4}
\end{table}
\end{center}

The generating function for the weighted count of Ferrers diagrams of partitions from a set $S$ with weight $\omega_\pi(a,b,c,d)$ is \[\sum_{\pi\in S} \omega_\pi(a,b,c,d).\]
Let $\rho_N$ be the obvious bijective map $U_N\mapsto D_N\times E_N$. Let $U_N$ be the set of partitions with parts less than or equal to $N$, and $D_N$ be the set of partitions into distinct parts with parts less than or equal to $N$. Recall that $E_N$ is the set of partitions into parts less than or equal to $N$, where every part repeats an even number of times. Let $\rho: U\mapsto D\times E$ where $\rho$, $U$, $D$, and $E$ are the same sets as above, where we remove the bound on the largest part $N$.

In \cite{Boulet}, Boulet proved identities for the generating functions for weighted count of partitions with four-decorated Ferrers diagrams from the sets $D$ and $U$.
\begin{theorem}[Boulet] \label{boulet}For variables $a$, $b$, $c$, and $d$ and $Q:=abcd$, we have\begin{align} \label{PSI}\Psi(a,b,c,d) &:= \sum_{\pi\in D} \omega_\pi(a,b,c,d) = \frac{(-a,-abc;Q)_\infty}{(ab;Q)_\infty},\\
\label{PHI}\Phi(a,b,c,d) &:= \sum_{\pi\in U} \omega_\pi(a,b,c,d) = \frac{(-a,-abc;Q)_\infty}{(ab,ac,Q;Q)_\infty}.
\end{align}\end{theorem}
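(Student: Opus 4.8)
The plan is to prove the distinct-part identity \eqref{PSI} first and then to deduce the unrestricted identity \eqref{PHI} from it by means of the weight-splitting bijection $\rho\colon U\to D\times E$ introduced above. Throughout, recall $Q=abcd$ and that an odd-indexed (resp. even-indexed) row of length $\ell$ carries weight $a^{\lceil\ell/2\rceil}b^{\lfloor\ell/2\rfloor}$ (resp. $c^{\lceil\ell/2\rceil}d^{\lfloor\ell/2\rfloor}$).

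For \eqref{PHI}: the map $\rho$ removes repeated parts two rows at a time, and each extracted block of a repeated value occupies an even number of consecutive rows beginning at an odd-indexed row, so the parity of every surviving row is preserved and $\rho$ multiplies weights, $\omega_\pi=\omega_{\pi'}\,w(\pi^*)$ with $\pi'\in D$ and $\pi^*\in E$. Hence $\Phi=\Psi\cdot G_E$, where $G_E:=\sum_{\pi^*\in E}w(\pi^*)$. A partition in $E$ is a disjoint union of blocks, one per distinct value $\ell$ (with even multiplicity $2m_\ell$); such a block has $m_\ell$ odd-indexed and $m_\ell$ even-indexed rows of length $\ell$, hence contributes $\big(Q^{\lfloor\ell/2\rfloor}(ac)^{\ell\bmod 2}\big)^{m_\ell}$. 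Summing the geometric series over each $m_\ell$ and splitting $\ell$ by parity gives $G_E=\prod_{s\ge1}(1-Q^s)^{-1}\prod_{s\ge0}(1-acQ^s)^{-1}=1/\big((ac;Q)_\infty(Q;Q)_\infty\big)$, and multiplying by $\Psi$ from \eqref{PSI} produces exactly \eqref{PHI}.

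For \eqref{PSI}: let $\Psi_N$ denote the same weighted sum restricted to parts $\le N$, so that $\Psi=\lim_{N\to\infty}\Psi_N$. Extracting the largest part yields the recurrence $\Psi_N(a,b,c,d)=\Psi_{N-1}(a,b,c,d)+a^{\lceil N/2\rceil}b^{\lfloor N/2\rfloor}\Psi_{N-1}(c,d,a,b)$ with $\Psi_0=1$: a distinct partition with largest part $<N$ is counted by the first term, while one whose unique largest part equals $N$ contributes the top-row weight $a^{\lceil N/2\rceil}b^{\lfloor N/2\rfloor}$ times the weight of the remaining diagram, whose rows are re-indexed with the opposite parity, accounting for the swap $(a,b,c,d)\mapsto(c,d,a,b)$. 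Separating $N$ even from $N$ odd turns this into a coupled pair of linear recurrences for $\Psi_N(a,b,c,d)$ and its twin $\Psi_N(c,d,a,b)$; one then exhibits a finite-product closed form for $\Psi_N$, namely the four-variable analog of the Gaussian binomial developed in Section~\ref{section6}, and confirms it by induction on $N$. Letting $N\to\infty$ collapses the finite product to $(-a,-abc;Q)_\infty/(ab;Q)_\infty$, which is \eqref{PSI}.

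The main obstacle is this last step: guessing and inductively verifying the closed form for the finite $\Psi_N$, since the recurrence is not self-contained in a single function but couples $\Psi_N(a,b,c,d)$ with its $(a,b,c,d)\mapsto(c,d,a,b)$ twin, and the driving coefficient $a^{\lceil N/2\rceil}b^{\lfloor N/2\rfloor}$ depends on the parity of $N$. Once the finite formula is in hand the infinite products follow by a routine limit. As a consistency check, specializing $(a,b,c,d)=(qt,q/t,qz,q/z)$, so that $Q=q^4$, $ab=q^2$ and $abc=q^3z$, turns $\Psi$ into $(-q^2;q^2)_\infty(-qt;q^4)_\infty(-q^3z;q^4)_\infty$; this matches $\sum_{i,j\ge0}P(i,j,q)t^iz^j$ obtained from \eqref{BasicCorollary} via Euler's theorem, in agreement with Theorem~\ref{DoubletoSingle}.
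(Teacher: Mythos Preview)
The paper does not supply its own proof of Theorem~\ref{boulet}; the result is quoted from Boulet's paper \cite{Boulet}. The only part the paper elaborates is the passage from $\Psi$ to $\Phi$ via the bijection $\rho$ and the identity \eqref{PSItoPHI}, and your derivation of \eqref{PHI} from \eqref{PSI} follows exactly this route. (A small wording issue: the extracted pair of equal rows in $\pi$ need \emph{not} begin at an odd-indexed row; what actually makes $\rho$ weight-multiplicative is that a consecutive pair of equal rows of length $\ell$ has weight $(ac)^{\lceil\ell/2\rceil}(bd)^{\lfloor\ell/2\rfloor}$ regardless of the parity of its starting index, and removing two rows preserves all remaining parities. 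Your conclusion $G_E=1/((ac;Q)_\infty(Q;Q)_\infty)$ is correct.)

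For \eqref{PSI} your recurrence $\Psi_N(a,b,c,d)=\Psi_{N-1}(a,b,c,d)+a^{\lceil N/2\rceil}b^{\lfloor N/2\rfloor}\Psi_{N-1}(c,d,a,b)$ is right, but the next step has a genuine gap. There is \emph{no} ``finite-product closed form'' for $\Psi_N$: the finite formulas available in the paper---Ishikawa--Zeng's \eqref{PSI2Nnu} and the paper's own \eqref{finiteBouletPSI}---are single sums, not products, and only collapse to a product in the limit $N\to\infty$ (via the $q$-binomial theorem). So the object you say you will ``confirm by induction'' does not exist in the form you describe, and you yourself flag this step as the main obstacle without carrying it out. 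To complete the argument along your lines you would have to verify that one of those \emph{sum} formulas satisfies the coupled recurrence, which is doable but not the routine product check you suggest. A cleaner alternative, already present in the paper, is the combinatorial derivation of \eqref{finiteBouletPHI} in Section~\ref{section6} via the column-extraction map $\rho^*_N$; taking $N\to\infty$ there and applying Euler's identity $\sum_{j\ge0}(ab)^j/(Q;Q)_j=1/(ab;Q)_\infty$ yields \eqref{PHI} (and hence \eqref{PSI}) directly, bypassing the inductive verification entirely.
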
 
The transition from \eqref{PHI} to \eqref{PSI} can be done with the aid of the bijective map $\rho$. The generating function for the weighted count of four-variable Ferrers diagrams which exclusively have an even number of rows of the same length is \[\frac{1}{(ac,Q;Q)_\infty}.\] Hence, we get  \begin{equation}\label{PSItoPHI}
\Phi(a,b,c,d) = \frac{\Psi(a,b,c,d)}{(ac,Q;Q)_\infty},
\end{equation}
which is similar to \eqref{BGunrestrictedToDistinct}.

\noindent It should be noted that these generating functions are consistent with the ordinary generating functions for number of partitions when all variables are selected to be $q$. For example, \[\Phi(q,q,q,q)=\frac{1}{(q;q)_\infty}.\]

\noindent Define the generating functions \begin{align}
 \label{PSI_N}\Psi_N(a,b,c,d) &:= \sum_{\pi\in D_N} \omega_\pi(a,b,c,d),\\
 \label{PHI_N}\Phi_N(a,b,c,d) &:= \sum_{\pi\in U_N} \omega_\pi(a,b,c,d),
 \end{align}
which are finite analogues of Boulet's generating functions for the weighted count of four-variable decorated Ferrers diagrams. In \cite{Masao}, Ishikawa and Zeng write explicit formulas for \eqref{PSI_N} and \eqref{PHI_N}. 

\begin{theorem}[Ishikawa, Zeng]\label{MasaoThm} For a non-zero integer $N$, variables $a$, $b$, $c$, and $d$, we have
\begin{align}
\label{PSI2Nnu}\Psi_{2N+\nu}(a,b,c,d) &= \sum_{i=0}^N \genfrac{[}{]}{0pt}{}{N}{i}_{Q} (-a;Q)_{N-i+\nu}(-c;Q)_{i} (ab)^{i},\\
\label{PHI2Nnu}\Phi_{2N+\nu}(a,b,c,d) &=\frac{1}{(ac;Q)_{N+\nu}(Q;Q)_{N}}\sum_{i=0}^N \genfrac{[}{]}{0pt}{}{N}{i}_{Q} (-a;Q)_{N-i+\nu}(-c;Q)_{i} (ab)^{i},
\end{align}
where $\nu\in\{0,1\}$ and $Q=abcd$.
\end{theorem}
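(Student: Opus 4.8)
The plan is to prove the formula \eqref{PSI2Nnu} for $\Psi_{2N+\nu}$ as an identity of polynomials in the four free variables $a,b,c,d$ by induction on the bound through largest-part extraction, and then to deduce \eqref{PHI2Nnu} for $\Phi_{2N+\nu}$ from it via the bijection $\rho$, exactly as \eqref{PHI} was obtained from \eqref{PSI} in Theorem~\ref{boulet}. First I would record the one-step recurrence for the finite Boulet generating function \eqref{PSI_N}. Splitting $D_M$ according to whether the largest admissible part $M$ occurs, and observing that deleting a part equal to $M$ (which must sit in the first, odd-indexed, row of length $M$) shifts every remaining row up by one and therefore interchanges the colour pairs $a\leftrightarrow c$, $b\leftrightarrow d$, one gets
\[\Psi_M(a,b,c,d)=\Psi_{M-1}(a,b,c,d)+a^{\lceil M/2\rceil}b^{\lfloor M/2\rfloor}\Psi_{M-1}(c,d,a,b),\qquad \Psi_0=1.\]
For $M=2N$ this reads $\Psi_{2N}(a,b,c,d)=\Psi_{2N-1}(a,b,c,d)+(ab)^N\Psi_{2N-1}(c,d,a,b)$, and for $M=2N+1$ it reads $\Psi_{2N+1}(a,b,c,d)=\Psi_{2N}(a,b,c,d)+a(ab)^N\Psi_{2N}(c,d,a,b)$.

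It then remains to check that the right-hand side of \eqref{PSI2Nnu}, call it $R_{2N+\nu}(a,b,c,d)$, obeys these two recurrences together with $R_0=1$. The key simplification is that the substitution $(a,b,c,d)\mapsto(c,d,a,b)$ fixes $Q=abcd$, sends $ab\mapsto cd$, and swaps $(-a;Q)_\bullet\leftrightarrow(-c;Q)_\bullet$; after using $(ab)^N(cd)^j=Q^j(ab)^{N-j}$ and reindexing the swapped sum by $j=N-i$, every term becomes proportional to the same monomial $(-a;Q)_{N-i}(-c;Q)_i(ab)^i$. In the even case the coefficient identity one is left with is exactly the $Q$-Pascal rule $\genfrac{[}{]}{0pt}{}{N}{i}_{Q}=\genfrac{[}{]}{0pt}{}{N-1}{i}_{Q}+Q^{N-i}\genfrac{[}{]}{0pt}{}{N-1}{i-1}_{Q}$, and in the odd case it collapses to the single-factor recursion $(-a;Q)_{N-i+1}=(1+aQ^{N-i})(-a;Q)_{N-i}$. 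Since $R_{2N+\nu}$ and $\Psi_{2N+\nu}$ satisfy the same recurrences and the same initial value, \eqref{PSI2Nnu} follows for all $a,b,c,d$.

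To pass to \eqref{PHI2Nnu} I would invoke the finite bijection $\rho_{2N+\nu}\colon U_{2N+\nu}\to D_{2N+\nu}\times E_{2N+\nu}$ and the fact (already used for \eqref{PSItoPHI} and \eqref{BGunrestrictedToDistinct}) that it is weight-multiplicative: extraction removes an even number of consecutive rows at a time, so the parity of every surviving row, and hence its colouring, is preserved, giving $\omega_\pi=\omega_{\pi'}\,\omega_{\pi^\ast}$. A removed pair of equal rows of common length $\ell$ consists of one odd-indexed ($a,b$) row and one even-indexed ($c,d$) row, so it contributes $a^{\lceil\ell/2\rceil}b^{\lfloor\ell/2\rfloor}c^{\lceil\ell/2\rceil}d^{\lfloor\ell/2\rfloor}$, which equals $Q^{\ell/2}$ for even $\ell$ and $ac\,Q^{(\ell-1)/2}$ for odd $\ell$, independently of its location. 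Summing a geometric series over the number of pairs of each length $\ell\le 2N+\nu$ gives the weighted generating function of $E_{2N+\nu}$ as $1/\big((ac;Q)_{N+\nu}(Q;Q)_{N}\big)$, whence $\Phi_{2N+\nu}=\Psi_{2N+\nu}/\big((ac;Q)_{N+\nu}(Q;Q)_{N}\big)$ and \eqref{PHI2Nnu} drops out of \eqref{PSI2Nnu}.

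The hard part is not any single computation but the bookkeeping in the inductive step: tracking the colour swap $(a,b,c,d)\mapsto(c,d,a,b)$, the correct floor/ceiling powers of the deleted row, and the $\nu$-dependent length shifts in the $(-a;Q)$ and $(-c;Q)$ factors, so that the two recurrences really do reduce to the $Q$-Pascal and Pochhammer relations above. One should also note that Theorem~\ref{DoubletoSingle} alone is not enough to conclude \eqref{PSI2Nnu}, since specializing $(a,b,c,d)=(qt,q/t,qz,q/z)$ forces $ab=cd=q^2$ and only pins the identity down on that hypersurface; the recurrence is what lifts it to four independent variables. As a consistency check, that very specialization gives $Q=q^4$, $-a=-qt$, $-c=-qz$, and reduces $\omega_\pi$ to $q^{|\pi|}t^{\,i}z^{\,j}$, where $i$ and $j$ count the odd-indexed and even-indexed odd parts of $\pi$; the resulting identity is precisely Theorem~\ref{DoubletoSingle}, so the present theorem is the natural four-variable lift of that result.
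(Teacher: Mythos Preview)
Your argument is correct. The paper itself does not prove Theorem~\ref{MasaoThm}; it quotes it from Ishikawa and Zeng \cite{Masao}. The only piece the paper does establish is the passage from \eqref{PSI2Nnu} to \eqref{PHI2Nnu}: equation~\eqref{PSItoPHI_Bounded} is derived via the bijection $\rho_N$ exactly as in your second step, with the same weighted count $1/\big((ac;Q)_{N+\nu}(Q;Q)_N\big)$ for $E_{2N+\nu}$.

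Your proof of \eqref{PSI2Nnu} by largest-part extraction is a genuine addition beyond what the paper offers, and it is entirely in the paper's spirit: the recurrence $\Psi_M(a,b,c,d)=\Psi_{M-1}(a,b,c,d)+a^{\lceil M/2\rceil}b^{\lfloor M/2\rfloor}\Psi_{M-1}(c,d,a,b)$ is the four-variable analogue of Lemma~\ref{RecRels}, and your verification that the right-hand side satisfies it is the analogue of the paper's proof of Theorem~\ref{PgenFunc}. The reductions you describe are accurate: after the swap $(a,b,c,d)\mapsto(c,d,a,b)$ and the reindexing $j\mapsto N-i$, the even step collapses to $\genfrac{[}{]}{0pt}{}{N}{i}_Q=\genfrac{[}{]}{0pt}{}{N-1}{i}_Q+Q^{N-i}\genfrac{[}{]}{0pt}{}{N-1}{i-1}_Q$ and the odd step to $(-a;Q)_{N-i+1}=(1+aQ^{N-i})(-a;Q)_{N-i}$. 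Your remark that Theorem~\ref{DoubletoSingle} alone cannot recover \eqref{PSI2Nnu} because the specialization $(qt,q/t,qz,q/z)$ forces $ab=cd$ is also well taken; the paper in fact runs this implication in the other direction, presenting \eqref{genFuncDistinct} as a special case of Theorem~\ref{MasaoThm}.
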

\noindent
We would like to point out the special case of \eqref{PHI2Nnu} with $(a,b,c,d) =(qzy,~qy/z,~qz/y,~q/zy)$ was first discovered and proven by Andrews in \cite{AndrewsStanley}. 
We note in passing that \eqref{PSI2Nnu} together with the $q$-binomial theorem \cite[II.3]{GasperRahman} yields 
\begin{theorem} For $\nu\in\{0,1\}$,\[\sum_{N\geq 0} \frac{x^N}{(Q;Q)}_N\Psi_{2N+\nu}(a,b,c,d) = (1+a\nu)\frac{(-axQ^\nu;Q)_\infty (-abcx;Q)_\infty}{(x;Q)_\infty(abx;Q)_\infty}, \] where $Q=abcd$.\end{theorem}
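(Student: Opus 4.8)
The plan is to substitute the Ishikawa--Zeng formula \eqref{PSI2Nnu} for $\Psi_{2N+\nu}(a,b,c,d)$ into the left-hand side, interchange the two summations, and then recognize each resulting inner sum as an instance of the $q$-binomial theorem \cite[(II.3)]{GasperRahman}. First I would write the $q$-binomial coefficient explicitly as $\genfrac{[}{]}{0pt}{}{N}{i}_{Q} = (Q;Q)_N / \big( (Q;Q)_i (Q;Q)_{N-i} \big)$, so that the factor $1/(Q;Q)_N$ standing in front cancels the numerator and the summand depends on $N$ and $i$ only through $(Q;Q)_i$ and $(Q;Q)_{N-i}$. Setting $m = N-i$ and letting $i,m\geq 0$ range independently then factors the double sum as a product of a sum over $i$ and a sum over $m$.

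The only step requiring a little care is the shift by $\nu$ in the factor $(-a;Q)_{N-i+\nu} = (-a;Q)_{m+\nu}$. I would use the elementary Pochhammer splitting $(-a;Q)_{m+\nu} = (-a;Q)_\nu\,(-aQ^\nu;Q)_m$, together with the observation that for $\nu\in\{0,1\}$ one has $(-a;Q)_\nu = 1+a\nu$. This pulls the prefactor $(1+a\nu)$ out of the sum over $m$ and simultaneously replaces the parameter $-a$ by $-aQ^\nu$, which is precisely what produces the $Q^\nu$ appearing in the numerator $(-axQ^\nu;Q)_\infty$ of the claimed right-hand side.

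With these preparations the sum over $m$ becomes $\sum_{m\geq0} \frac{(-aQ^\nu;Q)_m}{(Q;Q)_m}\, x^m$ and the sum over $i$ becomes $\sum_{i\geq0} \frac{(-c;Q)_i}{(Q;Q)_i}\, (abx)^i$. Applying \cite[(II.3)]{GasperRahman} with base $Q$ --- the first with parameter $-aQ^\nu$ and argument $x$, the second with parameter $-c$ and argument $abx$ --- evaluates these to $(-axQ^\nu;Q)_\infty/(x;Q)_\infty$ and $(-abcx;Q)_\infty/(abx;Q)_\infty$ respectively. Multiplying the two resulting products together with the prefactor $(1+a\nu)$ reproduces the right-hand side verbatim. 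I do not anticipate a genuine obstacle here: the computation is essentially a clean double application of the $q$-binomial theorem, and the sole subtlety is the bookkeeping of the $\nu$-shift, which is disposed of once and for all by the splitting $(-a;Q)_{m+\nu} = (1+a\nu)\,(-aQ^\nu;Q)_m$.
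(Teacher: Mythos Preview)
Your proposal is correct and follows precisely the approach indicated in the paper, which states only that the result follows from \eqref{PSI2Nnu} together with the $q$-binomial theorem \cite[(II.3)]{GasperRahman}. Your write-up fills in exactly the details the paper omits: cancelling $(Q;Q)_N$, reindexing via $m=N-i$ to factor the double sum, peeling off $(1+a\nu)$ from $(-a;Q)_{m+\nu}$, and applying the $q$-binomial theorem twice.
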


Similar to \eqref{PSItoPHI}, the connection between \eqref{PSI2Nnu} and \eqref{PHI2Nnu} can be obtained by means of the bijection $\rho_{N}$. In this way we have \begin{equation}\label{PSItoPHI_Bounded}\Phi_{2N+\nu}(a,b,c,d) = \frac{\Psi_{2N+\nu}(a,b,c,d)}{(ac;Q)_{N+\nu}(Q;Q)_{N},}\end{equation} where $N$ is a non-negative integer and $\nu\in\{0,1\}$. 

We can now rewrite Theorem~\ref{DoubletoSingle} as
\begin{equation}\label{genFuncDistinct}\Psi_{2N+\nu}(qt,q/t,qz,q/z)= \sum_{i= 0}^N \genfrac{[}{]}{0pt}{}{N}{i}_{q^4} (-qt;q^4)_{N-i+\nu}(-qz;q^4)_{i} q^{2i}.\end{equation}
This may be viewed as a special case of Theorem~\ref{MasaoThm}. 

The sum similar to that on the right-hand side of \eqref{PSI2Nnu} was seen in the literature before. In fact, Berkovich and Warnaar \cite{WarnaarBerkovich} utilized that sum in their identity for the Rogers--Szeg\H{o} polynomials.

\begin{theorem} [Berkovich, Warnaar]\label{HermiteBerkovichTHM} Let $N$ be a non-negative integer. Then the Rogers--Szeg\H{o} polynomials can be expressed as
\begin{equation}\label{HermiteBerkovich}
H_{2N+\nu}(zq,q^2) = \sum_{l=0}^{N}\genfrac{[}{]}{0pt}{}{N}{l}_{q^4} (-zq;q^4)_{N-l+\nu}(-q/z;q^4)_l (zq)^{2l}  
\end{equation} for $\nu\in\{0,1\}$.
\end{theorem}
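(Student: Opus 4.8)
The plan is to recognize the right-hand side of \eqref{HermiteBerkovich} as a specialization of Boulet's finite product $\Psi_{2N+\nu}$, and thereby reduce the statement to the Rogers--Szeg\H{o} summation \eqref{QHsum} already at our disposal. Concretely, I would first compare the sum in \eqref{HermiteBerkovich} with the explicit formula \eqref{PSI2Nnu} of Ishikawa and Zeng. Setting $(a,b,c,d) = (zq,\,q/z,\,q/z,\,qz)$ gives $Q = abcd = q^4$, $-a = -zq$, $-c = -q/z$, and $ab = q^2$, so that $(ab)^l = q^{2l} = (zq)^{2l}\cdot z^{-2l}\cdot\ldots$; I must be careful here, and the correct identification is $(a,b,c,d)=(zq,\,q/z,\,q/z,\,qz)$ yielding $(-a;Q)_{N-l+\nu} = (-zq;q^4)_{N-l+\nu}$, $(-c;Q)_l = (-q/z;q^4)_l$, and $(ab)^l = (zq\cdot q/z)^l = q^{2l}$, which does not match the $(zq)^{2l}$ in \eqref{HermiteBerkovich}. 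The clean route is instead to use our own specialization \eqref{genFuncDistinct}: there we already know $\Psi_{2N+\nu}(qt,q/t,qz,q/z)$ equals the sum $\sum_i \genfrac{[}{]}{0pt}{}{N}{i}_{q^4}(-qt;q^4)_{N-i+\nu}(-qz;q^4)_i\,q^{2i}$. So the first step is to match the two right-hand sides by choosing parameters so that the summand of \eqref{HermiteBerkovich} coincides with that of \eqref{genFuncDistinct}.

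The summand of \eqref{HermiteBerkovich} carries the factor $(zq)^{2l} = z^{2l}q^{2l}$ instead of the bare $q^{2i}$ in \eqref{genFuncDistinct}, while the $q$-Pochhammer factors match under $t\mapsto z$ and $z\mapsto 1/z$. Thus I would set $t=z$ and $z\mapsto 1/z$ in \eqref{genFuncDistinct}, giving $\Psi_{2N+\nu}(qz,q/z,q/z,qz) = \sum_i \genfrac{[}{]}{0pt}{}{N}{i}_{q^4}(-qz;q^4)_{N-i+\nu}(-q/z;q^4)_i\,q^{2i}$. To absorb the extra $z^{2l}$, I would instead track the homogeneity of $\Psi$: Boulet's weight $\omega_\pi(a,b,c,d)$ is homogeneous of degree $|\pi|$ in the four variables jointly, so rescaling all four variables by a common factor pulls out a power tracking the size. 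The extra $z^{2l}$ is exactly what appears when one rescales to convert $q^{2i}$ into $(zq)^{2i}$; I would make this precise by choosing the Boulet parameters as $(a,b,c,d)=(zq,\,q/z,\,z q,\,q/z)$ (so that $ab = q^2$ but the row-alternation produces the $z^{2l}$ weighting through the even-indexed rows), verify $Q = abcd = q^4$ is preserved, and confirm the four $q$-Pochhammer symbols reduce correctly to $(-zq;q^4)_{N-l+\nu}$ and $(-q/z;q^4)_l$.

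Once the right-hand side of \eqref{HermiteBerkovich} is identified with $\Psi_{2N+\nu}$ at the correct specialization, the remaining step is to evaluate the left-hand side. By definition \eqref{qhermite}, $H_{2N+\nu}(zq,q^2) = \sum_{l=0}^{2N+\nu}\genfrac{[}{]}{0pt}{}{2N+\nu}{l}_{q^2}(zq)^l$, and I would recognize this directly as the generating function $\Psi_{2N+\nu}$ evaluated at the same parameters, using the combinatorial meaning: $\Psi_{2N+\nu}(a,b,c,d)$ counts partitions into distinct parts $\le 2N+\nu$ by their four-variable weight, and under the chosen specialization each part of size $m$ contributes $(zq)^m$, so that a part of every size from $1$ to $2N+\nu$ is either present or absent, producing exactly $\prod_{m=1}^{2N+\nu}(1+(zq)^m)$-type structure. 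The cleanest identification is that both sides equal $\Psi_{2N+\nu}$ specialized identically, so the theorem follows immediately from \eqref{genFuncDistinct} (equivalently \eqref{PSI2Nnu}) once the parameter matching is verified.

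The main obstacle is the bookkeeping of the parameter substitution: getting the factor $(zq)^{2l}$ rather than $q^{2l}$ or $z^{2l}q^{2l}$ with the wrong power, and simultaneously keeping all four $q$-Pochhammer symbols and $Q=q^4$ correct under a single choice of $(a,b,c,d)$. I expect this to require care because the variables $b$ and $d$ enter $\Psi$ only through $Q$ and through the base of the Pochhammer symbols, so the $z$-dependence must be distributed across $a,b,c,d$ in exactly the right way. The cleanest check is to confirm that the $l=0$ and $l=N$ terms on both sides agree, which pins down the substitution uniquely; everything else is then routine simplification of $q$-Pochhammer symbols, and the Rogers--Szeg\H{o} evaluation \eqref{QHsum} is recovered as the $z=1$ case, providing a useful consistency check.
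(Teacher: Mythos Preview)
Your overall strategy---identify the right-hand side of \eqref{HermiteBerkovich} as $\Psi_{2N+\nu}$ via the Ishikawa--Zeng formula \eqref{PSI2Nnu}, then match it against the left-hand side---is exactly the connection the paper draws, but your execution has two genuine gaps.

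First, the parameter bookkeeping you flag as ``the main obstacle'' has a clean answer that none of your attempts reach: the correct specialization is $(a,b,c,d)=(zq,\,zq,\,q/z,\,q/z)$, i.e.\ $a=b=zq$ and $c=d=q/z$. Then $Q=abcd=q^4$, $(-a;Q)_{N-l+\nu}=(-zq;q^4)_{N-l+\nu}$, $(-c;Q)_l=(-q/z;q^4)_l$, and crucially $ab=(zq)^2$, so $(ab)^l=(zq)^{2l}$. All of your trial substitutions set $b=q/z$, which forces $ab=q^2$ and can never produce the factor $(zq)^{2l}$. The point you yourself note---that $b$ and $d$ enter \eqref{PSI2Nnu} only through $ab$ and $Q$---is exactly what allows $b=zq$; this is the substitution the paper records in \eqref{RogersSzegoToPSI}.

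Second, and more seriously, your plan for the left-hand side does not work. You claim that under the chosen specialization ``each part of size $m$ contributes $(zq)^m$'', producing a product of the shape $\prod_m(1+(zq)^m)$. But under $(zq,zq,q/z,q/z)$ an odd-indexed part of size $m$ contributes $(zq)^m$ while an even-indexed part contributes $(q/z)^m$, so in fact
\[
\Psi_{N}(zq,zq,q/z,q/z)=\sum_{\pi\in D_N}q^{|\pi|}z^{\gamma(\pi)},
\]
with $\gamma(\pi)$ the alternating sum of parts; and $H_N(zq,q^2)$ is not a product at all. The identity $H_N(zq,q^2)=\Psi_N(zq,zq,q/z,q/z)$ you still need is equivalent, coefficient by coefficient in $z$, to
\[
\sum_{\substack{\pi\in D_N\\ \gamma(\pi)=k}}q^{|\pi|}=q^k\genfrac{[}{]}{0pt}{}{N}{k}_{q^2},
\]
which is the paper's Theorem~\ref{combHermiteSum}. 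That is the actual content of \eqref{HermiteBerkovich} once the right-hand side has been recognized as $\Psi$, and it requires an independent argument---the paper points to the Sylvester bijection (Bressoud, ex.~2.2.5), or one could verify a common three-term recurrence. The summation \eqref{QHsum} you invoke is only the $z=1$ specialization and does not establish the general case.
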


\noindent We would like to mention that recently Cigler provided a new proof of \eqref{HermiteBerkovich} in \cite{Cigler}.

Let $\pi = (\lambda_1,\lambda_2,\dots,\lambda_k)$ be a partition. We want to define the weights $|\pi| = \lambda_1+\lambda_2+\dots+\lambda_k$, called the norm of the partition $\pi$, and $\gamma(\pi) = \lambda_1-\lambda_2+\lambda_3-\lambda_4+\dots+(-1)^{k+1}\lambda_k$, the alternating sum of parts of the partition $\pi$.

We see that the right-hand sides of \eqref{PSI2Nnu} and \eqref{HermiteBerkovich} coincide with the particular choice $(a,b,c,d)=(zq,zq,q/z,q/z)$ in \eqref{PSI2Nnu},
\begin{equation}\label{RogersSzegoToPSI}H_{N}(zq,q^2) = \Psi_{N}(zq,zq,q/z,q/z).\end{equation}

Obviously $\Psi_N(zq,zq,q/z,q/z)$ is the generalting function for the number of partitions into distinct parts $\leq N$, where exponent of $z$ is the alternating sum of the parts of partition. That is
\begin{equation*}
\Psi_{N}(zq,zq,q/z,q/z)=\sum_{\pi\in D_{N}} q^{|\pi|}\ z^{\gamma(\pi)}.
\end{equation*}
Therefore, extraction of the coefficient of $z^k$ would give us the generating function for the number of partitions with the alternating sums of parts equal to $k$. Using definition for $H_{N}(zq,q^2)$, given in \eqref{qhermite}, one can easily extract the coefficient of $z^k$ for a fixed non-negative integer $k$. This way we get \begin{equation}\label{extraction}q^k \genfrac{[}{]}{0pt}{}{N}{k}_{q^2}=\sum_{\begin{array}{c}
\pi\in D_{N},\\ \gamma(\pi)=k\end{array}} q^{|\pi|}. \end{equation} This can be interpreted as

\begin{theorem}\label{combHermiteSum} Let $N$, $n$, and $k$ be non-negative integers. The number of partitions of $n$ into solely $k$ odd parts $\leq 2N-2k+1$ is equal to the number of partitions of $n$ into distinct parts $\leq N$ with the alternating sum of parts being equal to $k$.
\end{theorem}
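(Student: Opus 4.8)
Theorem~\ref{combHermiteSum} asserts a bijection-level identity encoded in equation~\eqref{extraction}, namely $q^k\genfrac{[}{]}{0pt}{}{N}{k}_{q^2} = \sum_{\pi\in D_N,\ \gamma(\pi)=k} q^{|\pi|}$. Since this displayed equation has already been derived in the excerpt by extracting the coefficient of $z^k$ from~\eqref{RogersSzegoToPSI}, the task reduces to giving combinatorial meaning to both sides of~\eqref{extraction}. The plan is therefore to interpret the two sides of the established generating-function identity as counting the two families of partitions named in the theorem, and to confirm that the statistics match degree by degree.

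First I would read the right-hand side of~\eqref{extraction} directly: $\sum_{\pi\in D_N,\ \gamma(\pi)=k} q^{|\pi|}$ is by definition the generating function, by norm $|\pi|$, for partitions into distinct parts $\leq N$ whose alternating sum of parts equals $k$. The coefficient of $q^n$ here is exactly the second quantity in the theorem, the number of partitions of $n$ into distinct parts $\leq N$ with $\gamma(\pi)=k$. No work is required on this side beyond recalling the definitions of $|\pi|$ and $\gamma(\pi)$ introduced just above~\eqref{RogersSzegoToPSI}.

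The substance is on the left. I would show that $q^k\genfrac{[}{]}{0pt}{}{N}{k}_{q^2}$ is the generating function, by the quantity being partitioned, for partitions of $n$ into exactly $k$ odd parts each $\leq 2N-2k+1$. The key step is the standard fact that $\genfrac{[}{]}{0pt}{}{N}{k}_{q^2}$ generates partitions into at most $k$ parts, each an even number $\leq 2(N-k)$; explicitly, writing such a partition as $2\mu_1\geq 2\mu_2\geq\cdots\geq 2\mu_k\geq 0$ with $\mu_1\leq N-k$, the generating function for the $\mu$'s in base $q^2$ is $\genfrac{[}{]}{0pt}{}{N}{k}_{q^2}$. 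Then I would send each such even multiset to the strictly decreasing sequence of odd parts defined by $\lambda_i = 2\mu_i + (2(k-i)+1)$ for $i=1,\dots,k$, i.e.\ adding the fixed ``staircase'' $2k-1, 2k-3,\dots,3,1$ to force distinctness and oddness. This is the familiar bijection converting partitions with at most $k$ parts into partitions into exactly $k$ distinct (here, odd) parts; the largest resulting part is at most $2(N-k)+2k-1 = 2N-2k+1$, matching the stated bound, every part is odd, and there are exactly $k$ of them. The staircase contributes $\sum_{i=1}^k(2i-1)=k^2$ to the norm, but since the base of the binomial is $q^2$ the even doubling is already absorbed, and a short bookkeeping check shows the total norm tracked is $q^{|\lambda|}$ up to the prefactor $q^k$; carrying out this degree count is the one routine computation to verify, and it is exactly where the extra factor $q^k$ in~\eqref{extraction} is accounted for.

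The main obstacle is precisely this last reconciliation of exponents: one must confirm that the prefactor $q^k$ together with the base-$q^2$ binomial reproduces the odd staircase shift exactly, so that the left side of~\eqref{extraction} is the honest norm generating function for the odd-parts family and not an off-by-a-shift variant. Once the exponent bookkeeping is checked, comparing coefficients of $q^n$ on the two sides of~\eqref{extraction} yields the equality of the two counting functions, which is the assertion of the theorem. I expect no genuine difficulty beyond this arithmetic, since all the analytic content is already supplied by~\eqref{extraction} and the generating-function interpretation of the Gaussian binomial coefficient.
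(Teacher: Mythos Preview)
Your overall plan---derive \eqref{extraction} and then interpret each side combinatorially---is exactly the route the paper takes. The reading of the right-hand side is fine. The problem is your interpretation of the left-hand side.

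The theorem asks for partitions into exactly $k$ odd parts, \emph{not} $k$ distinct odd parts. Your staircase map $\lambda_i = 2\mu_i + (2(k-i)+1)$ produces distinct odd parts, and the bookkeeping you defer will not close: that staircase contributes $\sum_{i=1}^k (2i-1)=k^2$ to the norm, so your construction actually shows that $q^{k^2}\genfrac{[}{]}{0pt}{}{N}{k}_{q^2}$ is the generating function for partitions into $k$ distinct odd parts, and moreover your arithmetic $2(N-k)+2k-1=2N-2k+1$ is off (the left side equals $2N-1$). Neither the exponent nor the bound will match what is needed.

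The correct reading is simpler. A partition into exactly $k$ (not necessarily distinct) odd parts $\lambda_1\geq\cdots\geq\lambda_k$ each at most $2N-2k+1$ can be written as $\lambda_i=2\mu_i+1$ with $0\leq\mu_k\leq\cdots\leq\mu_1\leq N-k$. The constant shift by $1$ in each part accounts precisely for the prefactor $q^k$, and the $\mu$'s range over partitions in a $k\times(N-k)$ box, whose generating function in base $q^2$ is $\genfrac{[}{]}{0pt}{}{N}{k}_{q^2}$. This gives $q^k\genfrac{[}{]}{0pt}{}{N}{k}_{q^2}$ on the nose, and comparing coefficients in \eqref{extraction} then yields the theorem, as the paper intends.
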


This result can be explained with the aid of the Sylvester bijection, as in Bressoud's book (\cite{Bressoud}, p.52, ex. 2.2.5). Thus, Theorem~\ref{HermiteBerkovichTHM} may be interpreted as an analytical proof of Theorem~\ref{combHermiteSum}.

Moreover, the connection \eqref{PSItoPHI_Bounded} used in \eqref{extraction} yields \begin{equation}\label{extractionPHI}\llfloor z^k \rrfloor \Phi_{N}(zq,zq,q/z,q/z) = \frac{q^k}{(q^2;q^2)_{k}(q^2;q^2)_{N-k}},\end{equation} where $\llfloor z^k \rrfloor \Phi_{N}(zq,zq,q/z,q/z)$ denotes the coefficient of $z^k$ term in the power series expansion of $\Phi_{N}(zq,zq,q/z,q/z)$.
This leads us to a new combinatorial theorem. 

\begin{theorem}\label{CombPHIExtraction} Let $N$, $n$, and $k$ be non-negative integers. Then \[\mathcal{A}_{N}(n,k) = \mathcal{B}_{N}(n,k),\] where $\mathcal{A}_{N}(n,k)$ is the number of partitions of $n$ into no more than $N$ parts, where exactly $k$ parts are odd, and $\mathcal{B}_{N}(n,k)$ is the number of partitions of $n$ into parts $\leq N$, where alternating sum of parts is equal to $k$.
\end{theorem}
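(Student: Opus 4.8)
The plan is to show that $\mathcal{A}_N(n,k)$ and $\mathcal{B}_N(n,k)$ are generated by one and the same series, namely the right-hand side of \eqref{extractionPHI}. For $\mathcal{B}_N(n,k)$ this is essentially already in hand. As established above, $\Phi_N(zq,zq,q/z,q/z)=\sum_{\pi\in U_N}q^{|\pi|}z^{\gamma(\pi)}$ is the generating function for partitions into parts $\leq N$ in which $z$ records the alternating sum of parts and $q$ records the norm. Hence extracting the coefficient of $z^k$ in \eqref{extractionPHI} gives
\[
\sum_{n\geq 0}\mathcal{B}_N(n,k)\,q^n=\frac{q^k}{(q^2;q^2)_k\,(q^2;q^2)_{N-k}}.
\]
It therefore suffices to prove that $\sum_{n\geq 0}\mathcal{A}_N(n,k)\,q^n$ equals this same product.

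To compute the generating function for $\mathcal{A}_N(n,k)$ directly, I would split a partition counted by $\mathcal{A}_N(n,k)$ into its multiset of odd parts and its multiset of even parts. A partition is determined by these two multisets, and they may be chosen independently, so the generating function factors as a product. The odd-part component consists of exactly $k$ odd parts $\lambda_1\geq\dots\geq\lambda_k\geq 1$; writing $\lambda_i=2\mu_i+1$ with $\mu_i\geq 0$ sets up a bijection with partitions $\mu_1\geq\dots\geq\mu_k\geq 0$ into at most $k$ parts, under which $\sum\lambda_i=2\sum\mu_i+k$, contributing the factor $q^k/(q^2;q^2)_k$. The even-part component consists of at most $N-k$ even parts (since there are exactly $k$ odd parts and the total number of parts is at most $N$); halving each part gives a bijection with partitions into at most $N-k$ positive parts, contributing the factor $1/(q^2;q^2)_{N-k}$. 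Multiplying the two factors yields exactly $q^k/\big((q^2;q^2)_k(q^2;q^2)_{N-k}\big)$, which agrees with the generating function for $\mathcal{B}_N(n,k)$, and comparing coefficients of $q^n$ then proves the theorem.

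The step requiring the most care is the transfer of the ``at most $N$ parts'' constraint: because the odd parts are fixed at exactly $k$ in number, the bound passes entirely to the even parts, leaving room for at most $N-k$ of them, which is precisely what produces the index $N-k$ in $(q^2;q^2)_{N-k}$. The only genuine boundary issue is the range of $k$: when $k>N$ both sides vanish, since $\mathcal{A}_N(n,k)=0$ (there is no room for $k$ parts) and $\mathcal{B}_N(n,k)=0$ (the inequalities $0\leq\gamma(\pi)\leq\lambda_1\leq N$ confine the alternating sum to $[0,N]$), so the identity holds trivially outside $0\leq k\leq N$. Within that range the two elementary generating-function evaluations for partitions into a bounded number of suitably normalized parts are standard, so no further difficulty arises.
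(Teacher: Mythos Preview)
Your proof is correct and follows the same analytic route as the paper: both identify the generating function for $\mathcal{B}_N(n,k)$ via \eqref{extractionPHI} and match it against the product $q^k/\big((q^2;q^2)_k(q^2;q^2)_{N-k}\big)$. The difference is in how the $\mathcal{A}_N$ side is handled. You verify explicitly, by splitting a partition into its odd parts and its even parts, that this product is the generating function for $\mathcal{A}_N(n,k)$. The paper leaves that interpretation implicit and instead observes, immediately after stating the theorem, that the partitions counted by $\mathcal{A}_N(n,k)$ and $\mathcal{B}_N(n,k)$ are simply conjugates of one another: conjugation swaps ``at most $N$ parts'' with ``parts $\leq N$'' and sends the number of odd parts to the alternating sum of parts. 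So the paper's remark furnishes a one-line bijective proof, while your odd/even decomposition makes the generating-function argument self-contained without appealing to conjugation. Both are short and valid; the conjugation viewpoint is perhaps more conceptual, and your version has the virtue of justifying the product formula directly.
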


We illustrate Theorem~\ref{CombPHIExtraction} in Table~\ref{TableExample_N_n_k}.

\begin{center}
\begin{table}[htb] \caption{$\mathcal{A}_{3}(10,2)$ and $\mathcal{B}_{3}(10,2)$ with respective partitions for Theorem~\ref{CombPHIExtraction}}\begin{tabular}{cc}
$\mathcal{A}_{3}(10,2)=9$ : & $\begin{array}{c}
(9,1),\ (8,1,1),\ (7,3),\ (7,2,1),\ (6,3,1),\  \vspace*{.1cm}\\
(5,5),\ (5,4,1),\ (5,3,2),\ (4,3,3).\vspace*{.1cm}
\end{array}$ \\ 
\\ [-1.5ex]
$\mathcal{B}_{3}(10,2)=9$ : & $\begin{array}{c}\vspace*{.1cm}(3,3,3,1),\ (3,3,2,1,1),\ (3,2,2,2,1),\\ \vspace*{.1cm} (3,2,2,1,1,1),\ (3,2,1,1,1,1,1),\ (3,1,1,1,1,1,1,1),\\ \vspace*{.1cm} (2,2,2,2,2),\ (2,2,2,1,1,1,1),\ (2,1,1,1,1,1,1,1,1). \vspace*{.1cm}\end{array}$ 
\end{tabular}\label{TableExample_N_n_k}  
\end{table}
\end{center}

It is clear that the partitions counted by $\mathcal{A}_{N}(n,k)$ and $\mathcal{B}_N(n,k)$ are conjugates of each other. This follows easily from the observation that the number of odd parts in a partition turns into the alternating sum of parts in the conjugate of this partition.

\section{Further Observations}\label{section6}

In this section we will extend Boulet's approach to the weighted partitions with bounds on the number of parts and largest parts., Theorem~\ref{boulet}.

Let $U_N$ be defined, as in Section~\ref{section3}, as the set of partitions with largest part less than or equal to $N$. Let $\wTilde{D}_N$ be the set of partitions into parts $\leq N$, where the difference between an odd-indexed part and the following non-zero even-indexed part is $\leq 1$. Let $\wTilde{E}_N$ be the set of partitions into parts $\leq N$, where the Ferrers diagrams of these partitions, exclusively, have odd-height columns, and every present column size repeats an even number of times. Define $\wTilde{D}$ and $\wTilde{E}$ similar to the sets $\wTilde{D}_N$ and $\wTilde{E}_N$, where we remove the restriction on the largest part. 

Let $\rho^*:U\mapsto \wTilde{D} \times \wTilde{E}$ ($\rho^*_N:U_N\mapsto \wTilde{D}_N \times \wTilde{E}_N$) be the similar map to $\rho$ ($\rho_N$). Let $\wTilde{\pi}$ be a partition in $U_N$. The image $\rho^*_N(\wTilde{\pi})=(\wTilde{\pi}',\wTilde{\pi}^*)$ is obtained by extracting even number of odd height columns from $\wTilde{\pi}$'s Ferrers diagram repeatedly, until there are no more repetitions of odd height columns in the Ferrers diagram of $\wTilde{\pi}$. We put these extracted columns in $\wTilde{\pi}^*$, and the partition that is left after extraction is $\wTilde{\pi}'$. An example of this map is $\rho^*((10,7,4)) = (\wTilde{\pi}',\wTilde{\pi}^*) = ((4,3),(6,4,4))$, as demonstrated in Table~\ref{table5}.

\begin{center} 
\definecolor{cqcqcq}{rgb}{0.75,0.75,0.75}
\begin{table}[htb]\caption{$\rho^*((10,7,4)) = ((4,3),(6,4,4))$}
\begin{tikzpicture}[line cap=round,line join=round,x=0.5cm,y=0.5cm]

\clip(0.5,0.5) rectangle (26.5,4.5);
\draw [line width=.25pt] (1,1)-- (5,1);
\draw [line width=.25pt] (1,2)-- (8,2);
\draw [line width=.25pt] (1,3)-- (11,3);
\draw [line width=.25pt] (1,4)-- (11,4);
\draw [line width=.25pt] (11,3)-- (11,4);
\draw [line width=.25pt] (10,4)-- (10,3);
\draw [line width=.25pt] (9,4)-- (9,3);
\draw [line width=.25pt] (8,2)-- (8,4);
\draw [line width=.25pt] (7,4)-- (7,2);
\draw [line width=.25pt] (6,2)-- (6,4);
\draw [line width=.25pt] (5,1)-- (5,4);
\draw [line width=.25pt] (1,4)-- (1,1);
\draw [line width=.25pt] (2,4)-- (2,1);
\draw [line width=.25pt] (3,4)-- (3,1);
\draw [line width=.25pt] (4,4)-- (4,1);
\draw [line width=.25pt] (13,4)-- (13,2);
\draw [line width=.25pt] (16,2)-- (16,4);
\draw [line width=.25pt] (17,4)-- (13,4);
\draw [line width=.25pt] (17,3)-- (13,3);
\draw [line width=.25pt] (17,4)-- (17,3);
\draw [line width=.25pt] (16,2)-- (13,2);
\draw [line width=.25pt] (14,4)-- (14,2);
\draw [line width=.25pt] (15,4)-- (15,2);
\draw [line width=.25pt] (19,4)-- (19,1);
\draw [line width=.25pt] (19,4)-- (25,4);
\draw [line width=.25pt] (24,4)-- (24,3);
\draw [line width=.25pt] (25,4)-- (25,3);
\draw [line width=.25pt] (25,3)-- (19,3);
\draw [line width=.25pt] (23,4)-- (23,1);
\draw [line width=.25pt] (19,1)-- (23,1);
\draw [line width=.25pt] (23,2)-- (19,2);
\draw [line width=.25pt] (20,4)-- (20,1);
\draw [line width=.25pt] (21,4)-- (21,1);
\draw [line width=.25pt] (22,4)-- (22,1);

\draw (12,2.5) node[anchor=center] {$\mapsto$};
\draw (18,2.5) node[anchor=center] {\textbf{,}};

\draw (26,2.5) node[anchor=center] {\textbf{.}};

\end{tikzpicture}\label{table5}
\end{table}
\end{center}

\noindent With the definition of the bijection $\rho^*_N$, we can finitize Boulet's combinatorial approach. Let $\wTilde{\pi}$ be a fixed partition with largest part less than or equal to $2N+\nu$ for a non-negative integer $N$ and $\nu\in\{0,1\}$. We look at $\rho_{2N+\nu}^*(\wTilde{\pi}) = (\wTilde{\pi}',\wTilde{\pi}^*)$. Here $\wTilde{\pi}'$ is a partition in $D_{2(N-k)+\nu}$ with the specified difference conditions. In this construction, $k$ is half the number of parts in $\wTilde{\pi}^*$ which is a partition in $E_{2k}$. The generating function for the weighted count of four-variable decorated Ferrers diagrams of such a partition $\wTilde{\pi}'$ is 
\begin{equation}\label{a_abc_ac_Q_parts_GF}\frac{(-a;Q)_{N-k+\nu} (-abc;Q)_{N-k}}{(ac;Q)_{N-k+\nu}(Q;Q)_{N-k}}.\end{equation} 
Similarly, the generating function for the weighted count of four-variable decorated Ferrers diagrams of such $\wTilde{\pi}^*$ is 
\begin{equation}\label{abPartsGF}\frac{(ab)^k}{(Q;Q)_k}.\end{equation} 
Hence, for $\nu\in\{0,1\}$, the generating function $\Phi_{2N+\nu}(a,b,c,d)$ for the weighted count of partitions with parts less than or equal to $2N+\nu$, is the sum over $k$ of the product of two functions in \eqref{a_abc_ac_Q_parts_GF} and \eqref{abPartsGF}. In this way, we arrive at
\begin{theorem}\label{finiteBoulet} For a non-zero integer $N$, variables $a$, $b$, $c$, $d$, and $Q=abcd$, we have
\begin{align}
\label{finiteBouletPHI}\Phi_{2N+\nu}(a,b,c,d) &=\frac{1}{(Q;Q)_{N}}\sum_{i=0}^N \genfrac{[}{]}{0pt}{}{N}{i}_{Q} \frac{(-a;Q)_{i+\nu}(-abc;Q)_{i} }{(ac;Q)_{i+\nu}}(ab)^{N-i},\\
\label{finiteBouletPSI}\Psi_{2N+\nu}(a,b,c,d) &=\sum_{i=0}^N \genfrac{[}{]}{0pt}{}{N}{i}_{Q} (-a;Q)_{i+\nu}(-abc;Q)_{i} \frac{(ac;Q)_{N+\nu}}{(ac;Q)_{i+\nu}}(ab)^{N-i},
\end{align} where $\nu\in\{0,1\}$.
\end{theorem}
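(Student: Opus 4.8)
The plan is to establish \eqref{finiteBouletPHI} for $\Phi_{2N+\nu}$ combinatorially through the bijection $\rho^*_{2N+\nu}$, and then to read off \eqref{finiteBouletPSI} for $\Psi_{2N+\nu}$ by multiplying the former by $(ac;Q)_{N+\nu}(Q;Q)_N$ via the connection \eqref{PSItoPHI_Bounded}. The first task is to make $\rho^*_N$ rigorous, and for that I would record the key equivalence that the difference condition defining $\wTilde{D}$ is the same as requiring the conjugate partition to have distinct odd parts, i.e.\ that the Ferrers diagram contains no two columns of equal odd height. With this observation the map that repeatedly deletes pairs of equal odd-height columns until none remain is well defined: the deleted columns assemble into $\wTilde{\pi}^*\in\wTilde{E}$ and the survivor $\wTilde{\pi}'$ lands in $\wTilde{D}$. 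I would check it is a bijection $\rho^*_N\colon U_N\to\wTilde{D}_N\times\wTilde{E}_N$ by exhibiting the forced inverse: $\wTilde{\pi}^*$ prescribes, for each odd height, an even number of columns that must be merged back into $\wTilde{\pi}'$, and this reinsertion is unique.

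The crucial structural point, which I would emphasize, is that the four-variable weight is additive along $\rho^*_N$. Deleting or inserting a block of equal odd-height columns shifts every column to its right by an \emph{even} amount, so all surviving columns retain their position parity and hence their $a/b$ (resp.\ $c/d$) roles; consequently $\omega_{\wTilde{\pi}}=\omega_{\wTilde{\pi}'}\,\omega_{\wTilde{\pi}^*}$. Because the largest part of $\wTilde{\pi}$ equals its number of columns, deleting the $2k$ columns of $\wTilde{\pi}^*$ leaves at most $2(N-k)+\nu$ columns, so $\wTilde{\pi}'\in\wTilde{D}_{2(N-k)+\nu}$, while the heights of the deleted columns stay unrestricted. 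The weighted count of $\wTilde{\pi}^*$ is then light: a matched pair of odd-height columns in consecutive (odd, even) positions carries weight $ab$ times a power of $Q=abcd$ independently of where the block begins, so $k$ such pairs with weakly decreasing heights contribute the expression in \eqref{abPartsGF}. Granting the companion generating function \eqref{a_abc_ac_Q_parts_GF} for $\wTilde{\pi}'$, weight additivity gives $\Phi_{2N+\nu}$ as the sum over $k$ of the products of \eqref{a_abc_ac_Q_parts_GF} and \eqref{abPartsGF}; the substitution $i=N-k$ together with $\genfrac{[}{]}{0pt}{}{N}{i}_Q=(Q;Q)_N/\bigl((Q;Q)_i(Q;Q)_{N-i}\bigr)$ turns this into exactly \eqref{finiteBouletPHI}. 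Multiplying by $(ac;Q)_{N+\nu}(Q;Q)_N$ and invoking \eqref{PSItoPHI_Bounded} then produces \eqref{finiteBouletPSI} with no further work, since the $(Q;Q)_N$ cancels and the factor $(ac;Q)_{N+\nu}$ merges into each summand.

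The hard part will be the derivation of \eqref{a_abc_ac_Q_parts_GF}, the bounded four-variable weighted generating function for the difference-condition set $\wTilde{D}_{2(N-k)+\nu}$. This is a finitized, $(ab)$-free analogue of Boulet's $\Phi$ from \eqref{PHI}: extracting the $ab$-pairs into $\wTilde{\pi}^*$ is precisely what removes the factor $1/(ab;Q)_\infty$. I would compute it by organizing the rows of $\wTilde{\pi}'$ into the near-equal pairs $(\lambda_{2i-1},\lambda_{2i})$ forced by the difference condition, evaluating the weight of each pair according to the parities of the two row lengths, and summing over all admissible configurations with largest part $\leq 2(N-k)+\nu$. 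As a sanity check, letting the bound tend to infinity should reproduce $\frac{(-a,-abc;Q)_\infty}{(ac,Q;Q)_\infty}$, the unbounded weighted count of $\wTilde{D}$.

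Finally, I would note that an independent confirmation of the whole theorem is available by comparing \eqref{finiteBouletPSI} with the Ishikawa--Zeng formula \eqref{PSI2Nnu}: the two finite sums for $\Psi_{2N+\nu}$ must coincide, and reconciling them is a terminating $q$-hypergeometric transformation. This can serve either as a numerical and symbolic check of \eqref{a_abc_ac_Q_parts_GF} or, run in reverse, as an alternative purely algebraic route once \eqref{finiteBouletPHI} is established.
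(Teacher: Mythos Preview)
Your proposal is correct and follows essentially the same route as the paper: decompose $U_{2N+\nu}$ via the column-extraction bijection $\rho^*_{2N+\nu}$, identify the $\wTilde{\pi}^*$ contribution as \eqref{abPartsGF} and the $\wTilde{\pi}'$ contribution as \eqref{a_abc_ac_Q_parts_GF}, sum over $k$, reindex $i=N-k$, and then pass from \eqref{finiteBouletPHI} to \eqref{finiteBouletPSI} using \eqref{PSItoPHI_Bounded}. Your added remarks on weight additivity under even column shifts, on the equivalence of the $\wTilde{D}$ difference condition with ``distinct odd parts in the conjugate,'' and on the confirming $q$-hypergeometric transformation against \eqref{PSI2Nnu} all align with (and in places make more explicit) what the paper does immediately before and after the theorem.
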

\noindent We remark that we use \eqref{PSItoPHI_Bounded} to derive \eqref{finiteBouletPSI}. Observe that Theorem~\ref{finiteBoulet} is a perfect companion to Theorem~\ref{MasaoThm}. However, our derivation of Theorem~\ref{finiteBoulet}, unlike Theorem~\ref{MasaoThm}, is completely combinatorial. 

Next we rewrite \eqref{PSI2Nnu} and \eqref{finiteBouletPSI} using hypergeometric notations as
 \begin{align}\label{Psi_2Phi1}\Psi_{2N+\nu}(a,b,c,d) &= (ab)^N(-c;Q)_{N} (1+\nu a)\  {}_2\phi_1 \left(\genfrac{}{}{0pt}{}{Q^{-N},\ -aQ^{\nu}}{-\frac{Q^{1-N}}{c}};Q,-d \right),\\ \intertext{and} \nonumber
\Psi_{2N+\nu}(a,b,c,d)&= (-a^2b)^NQ^{N\nu} (1+\nu a) \frac{(-c,bdQ^{-N-\nu};Q)_N}{(-\frac{Q^{1-N}}{c};Q)_N}\times\\\label{3Phi1Sum}  &\hspace{2.7cm}{}_3\phi_1\left(\genfrac{}{}{0pt}{}{\ Q^{-N},\ -aQ^{\nu},\ -abc\ }{acQ^{\nu}}; Q, \frac{Q^N}{ab}\right).\end{align}
Comparing \eqref{Psi_2Phi1} and \eqref{3Phi1Sum} we get
\begin{equation}\label{transform2Phi1to3Phi1}
{}_2\phi_1 \left(\genfrac{}{}{0pt}{}{Q^{-N},\ -aQ^{\nu}}{-\frac{Q^{1-N}}{c}};Q,-d \right) = (-aQ^\nu)^N \frac{( bdQ^{-N-\nu};Q)_N}{(-\frac{Q^{1-N}}{c};Q)_N} {}_3\phi_1\left(\genfrac{}{}{0pt}{}{\ Q^{-N},\ -aQ^{\nu},\ -abc\ }{acQ^{\nu}}; Q, \frac{Q^N}{ab}\right).
\end{equation}

It is easy to check that \eqref{transform2Phi1to3Phi1} is nothing else but \cite[(III.8)]{GasperRahman} with the choice of variables $q\mapsto Q$, $b\mapsto-aQ^\nu$, $c\mapsto -Q^{1-N}/c$, and $z\mapsto -d$ for $\nu\in\{0,1\}$. 
%

We can further generalize \eqref{finiteBouletPHI} of Theorem~\ref{finiteBoulet} by putting bounds on the number of parts in the given partitions. Let $\Phi_{N,M}(a,b,c,d)$ be the generating function of partitions with weights, where every part is less than or equal to $N$, and the number of non-zero parts is less than or equal to $M$.

\begin{theorem}\label{boundedFiniteBoulet} Let $\nu\in\{0,1\}$ and $2N+\nu$, $M$ be positive integers,  then
\begin{align}\label{boundedFiniteBouletEQN}
\nonumber \Phi_{2N+\nu,2M}(a,b,c,d) &= \sum_{l=0}^N \genfrac{[}{]}{0pt}{}{N-l+M-1}{N-l}_{Q} (ab)^{N-l} \sum_{m_2=0}^l (abc)^{m_2} Q^{m_2 \choose 2} \genfrac{[}{]}{0pt}{}{l}{m_2}_{Q} \times \\ &\hspace{-1cm} \sum_{m_1=0}^{l+\nu} a^{m_1} Q^{m_1 \choose 2} \genfrac{[}{]}{0pt}{}{l+\nu}{m_1}_{Q}  \sum_{n=0}^{M-m_1-m_2} \genfrac{[}{]}{0pt}{}{M+l-n-m_1-m_2}{M-n-m_1-m_2}_{Q} \frac{(Q^{l+\nu};Q)_n}{(Q;Q)_n} (ac)^n.
\end{align}
\end{theorem}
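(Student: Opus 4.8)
The plan is to refine the completely combinatorial derivation of Theorem~\ref{finiteBoulet} by tracking the number of parts through the bijection $\rho^*_{2N+\nu}$. Recall that $\Phi_{2N+\nu,2M}(a,b,c,d)$ is the weighted generating function for partitions in $U_{2N+\nu}$ with at most $2M$ parts. Writing $\rho^*_{2N+\nu}(\wTilde{\pi})=(\wTilde{\pi}',\wTilde{\pi}^*)$, I would first record the three facts that make this decomposition compatible with both bounds: the column multiset of $\wTilde{\pi}$ is the disjoint union of those of $\wTilde{\pi}'$ and $\wTilde{\pi}^*$, so the largest part is additive (forcing $\wTilde{\pi}'\in\wTilde{D}_{2(N-k)+\nu}$ when $\wTilde{\pi}^*$ has $2k$ columns), whereas the number of parts, being the maximal column height, is the \emph{maximum} of the two. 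Hence the condition of having at most $2M$ parts splits as the independent requirement that both $\wTilde{\pi}'$ and $\wTilde{\pi}^*$ have at most $2M$ parts, and for each $k$ the generating function factors as a product of the two bounded pieces. Setting $l=N-k$ produces the outer sum over $l$.

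For $\wTilde{\pi}^*\in\wTilde{E}$ with $k=N-l$ odd-height column-pairs, a pair of height $2s_i+1$ carries weight $(ab)Q^{s_i}$, and the bound of at most $2M$ parts forces $s_i\le M-1$; summing over $M-1\ge s_1\ge\cdots\ge s_k\ge 0$ gives the outer factor $(ab)^{N-l}\genfrac{[}{]}{0pt}{}{N-l+M-1}{N-l}_Q$, which is the number-of-parts refinement of \eqref{abPartsGF}.

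The core is the bounded generating function $G_l$ for $\wTilde{\pi}'\in\wTilde{D}_{2l+\nu}$ with at most $2M$ parts. Here I would decompose $\wTilde{\pi}'$ into its adjacent row-pairs $(\lambda_{2i-1},\lambda_{2i})$, each of exactly one of four types determined by the parities of the two rows: with size parameter $u$, a pair of type E $(2u,2u)$, A $(2u+1,2u)$, AC $(2u+1,2u+1)$, or ABC $(2u+2,2u+1)$ carries weight $Q^{u}$, $aQ^{u}$, $acQ^{u}$, or $abcQ^{u}$ respectively, and the degenerate pair $(1,0)$ of type A accounts for the single leftover part when the number of parts is odd. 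The difference-$1$ types A and ABC must use distinct sizes, since two equal-size copies would violate $\lambda_{2i}\ge\lambda_{2i+1}$, while the difference-$0$ types E and AC may repeat; moreover the largest-part bound $2l+\nu$ limits the available sizes to $l+\nu$ values for types A and AC and to $l$ values for types ABC and E. A key checkable point is that any multiset of typed pairs admits a unique non-increasing arrangement into a partition of $\wTilde{D}_{2l+\nu}$ (among equal top-rows, difference-$0$ pairs precede difference-$1$ pairs), so the four families are chosen independently, which is exactly the finite refinement of the product \eqref{a_abc_ac_Q_parts_GF}. Tracking sizes then yields $a^{m_1}Q^{\binom{m_1}{2}}\genfrac{[}{]}{0pt}{}{l+\nu}{m_1}_Q$ for $m_1$ type-A pairs, $(abc)^{m_2}Q^{\binom{m_2}{2}}\genfrac{[}{]}{0pt}{}{l}{m_2}_Q$ for $m_2$ type-ABC pairs, and $(ac)^{n}\tfrac{(Q^{l+\nu};Q)_n}{(Q;Q)_n}$ for $n$ (repeatable) type-AC pairs.

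The number-of-parts bound enters precisely at the type-E stage. Each non-degenerate pair contributes $2$ parts and the at-most-one degenerate type-A pair contributes $1$ part, so if $T=m_1+m_2+n+e$ denotes the total number of typed elements then the part count is $2T$ or $2T-1$; in either case the condition of at most $2M$ parts is equivalent to the clean inequality $T\le M$, that is $e\le M-m_1-m_2-n$. Summing the repeatable type-E contribution over $0\le e\le M-m_1-m_2-n$ with sizes in $\{1,\dots,l\}$ produces $\genfrac{[}{]}{0pt}{}{M+l-n-m_1-m_2}{M-n-m_1-m_2}_Q$, and assembling the four factors gives the inner triple sum, hence $G_l$ and the theorem. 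I expect the main obstacle to be making the unique-arrangement bijection fully rigorous in the presence of both bounds, in particular verifying that the degenerate leftover pair interacts with the constraint of at most $2M$ parts so as to leave exactly $T\le M$, and that the type-E budget is precisely $M-m_1-m_2-n$, since this coupling is what turns the product of the four families into the nested (rather than factored) shape of the right-hand side.
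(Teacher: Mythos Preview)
Your proposal is correct and follows essentially the same approach as the paper: apply $\rho^*_{2N+\nu}$ to split off $\wTilde{\pi}^*$, observe that the part bound passes to each factor via a \emph{maximum} (your key observation, implicit in the paper), and then refine the unbounded product \eqref{a_abc_ac_Q_parts_GF} for $\wTilde{\pi}'$ into the displayed $q$-binomials. The paper's proof is only a two-sentence sketch (``use $\rho_N$ and $\rho^*_N$; replace products such as \eqref{abPartsGF} by $q$-binomials''), whereas you have supplied the actual combinatorics behind the inner triple sum by reading the factors $(-a;Q)_{l+\nu}$, $(-abc;Q)_l$, $1/(ac;Q)_{l+\nu}$, $1/(Q;Q)_l$ as the four typed row-pairs A, ABC, AC, E and then imposing the budget $m_1+m_2+n+e\le M$; this is exactly the refinement the paper alludes to, and your verification that the degenerate $(1,0)$ pair leaves the bound as $T\le M$ is the one point where care is genuinely needed.
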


\noindent Proof of Theorem~\ref{boundedFiniteBoulet} utilizes the same maps $\rho_N$ and $\rho_N^*$ as in the proof of Theorem~\ref{finiteBoulet}. To account for the bounds on the number of parts, the previously used generating functions are being replaced with appropriate $q$-binomial coefficients. Summing over the possibilities as we did in the proof of Theorem~\ref{finiteBoulet} yields Theorem~\ref{boundedFiniteBoulet}. For example, we replace product in \eqref{abPartsGF} with \[\genfrac{[}{]}{0pt}{}{M+k-1}{k}_{Q} (ab)^{k}.\] This is the generating function for the number of partitions into even parts such that the total number of parts is odd and $\leq 2M-1$, with the largest part being exactly $2k$, where we count these partitions with Boulet-Stanley weights.

\noindent Fixing $(a,b,c,d)=(q,q,q,q)$, we get \begin{equation}\label{qBinCoeff}\Phi_{N,2M}(q,q,q,q)=\genfrac{[}{]}{0pt}{}{N+2M}{2M}_{q}. \end{equation} 
So it makes sense to view the sum in \eqref{boundedFiniteBouletEQN} as a four-parameter generalization of the $q$-binomial coefficient \eqref{qBinCoeff}. 

We can combinatorially see that $\Phi_{2N+\nu,2M}(a,b,c,d)$ satisfies similar recurrence relations to $q$-binomial coefficients. Using these recurrences, we can write the $\Phi_{N,M}(a,b,c,d)$ for an odd $M$. We have the relations
\begin{equation}\label{restOfPhi}
\Phi_{2N+\nu,2M+1}(a,b,c,d) = \frac{\Phi_{2N+\nu,2(M+1)}(c,d,a,b)-\Phi_{2N-1+\nu,2(M+1)}(c,d,a,b)}{c^{\nu}(cd)^N},\\
\end{equation} where $N$, $M$ are non-negative integers and $\nu\in\{0,1\}$. This gives us the full list of possibilities for the bounds of $\Phi_{N,M}(a,b,c,d)$.

Yee in \cite{AeJaYee}, wrote a generating function for some weighted count of partitions with bounds on the largest part and the number of parts. We can also remove Yee's restrictions on weights. In other words, Yee's combinatorial study can be generalized to deal with the four-variable decorated Ferrers diagrams.

\begin{theorem}\label{generalYee} Let $2N+\nu$ and $2M+\mu$ be positive integers. Then 
\begin{align}\nonumber
\Phi_{2N+\nu,2M+\mu}(a,b,c,d) &= \sum_{k=0}^M (ac)^k \genfrac{[}{]}{0pt}{}{N+k-1+\nu}{k}_{Q} \sum_{j=0}^N (ab)^{N-j} \times\\ \label{generalYeeEQN}
&\hspace{.5cm}\left(\sum_{m_1=0}^j (1+\nu\mu a) a^{m_1} Q^{{m_1\choose 2}+\nu\mu m_1} \genfrac{[}{]}{0pt}{}{M-k+\mu-\nu}
{m_1}_{Q}\genfrac{[}{]}{0pt}{}{M-k+j-m_1}{j-m_1}_{Q} \right)\times\\ \nonumber
&\hspace{.5cm}\left(\sum_{m_2=0}^{N-j} c^{m_2} Q^{m_2\choose 2} \genfrac{[}{]}{0pt}{}{M-k}{m_2}_{Q} \frac{(Q^{M-k+\mu};Q)_{N-j-m_2}}{(Q;Q)_{N-j-m_2}} \right)
\end{align} for $\nu,\ \mu\in\{0,1\}$, where $(\nu,\mu)\not=(1,0)$ and $Q=abcd$.
\end{theorem}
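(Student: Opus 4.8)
The plan is to follow the combinatorial strategy announced in the paragraph preceding the statement, generalizing the proof of Theorem~\ref{finiteBoulet} by tracking both the largest part (bounded by $2N+\nu$) and the number of parts (bounded by $2M+\mu$). I would begin with the bijection $\rho^*_{2N+\nu}$, which decomposes a partition $\widetilde{\pi}\in U_{2N+\nu}$ with at most $2M+\mu$ parts into a pair $(\widetilde{\pi}',\widetilde{\pi}^*)$, where $\widetilde{\pi}^*\in\widetilde{E}$ collects the even-multiplicity odd-height columns and $\widetilde{\pi}'\in\widetilde{D}$ is the reduced partition. Writing $k$ for half the number of extracted columns, the contribution of $\widetilde{\pi}^*$ is a weighted count of partitions into even numbers of odd-height columns, each of height at most $2N+\nu$ and with total number of columns bounded appropriately; this should furnish the outer factor $(ac)^k\genfrac{[}{]}{0pt}{}{N+k-1+\nu}{k}_{Q}$ after the bound on the number of parts is converted into a $q$-binomial coefficient exactly as was done to replace \eqref{abPartsGF}.

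Next I would analyze the weighted generating function for $\widetilde{\pi}'$, the distinct-type partition satisfying the difference conditions defining $\widetilde{D}$. In the unbounded setting of Theorem~\ref{finiteBoulet} this was \eqref{a_abc_ac_Q_parts_GF}; here each infinite product must be finitized into a $q$-binomial factor reflecting the residual bound $2M+\mu-2k$ on the number of parts of $\widetilde{\pi}'$. The variables $a$ and $c$ govern the two residue classes of rows, so I expect the sum over $j$ (recording how the remaining rows split) together with the two inner sums over $m_1$ and $m_2$—which encode the number of $a$-decorated and $c$-decorated boxes arising from the odd-indexed and even-indexed rows respectively—to reproduce the bracketed double product in \eqref{generalYeeEQN}. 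The factors $a^{m_1}Q^{\binom{m_1}{2}}$ and $c^{m_2}Q^{\binom{m_2}{2}}$ are the standard $q$-binomial-theorem contributions $\llfloor \cdot\rrfloor (-a;Q)$ and $(-c;Q)$, now truncated, and the prefactor $(1+\nu\mu a)$ together with the shift $Q^{\nu\mu m_1}$ handles the extra box appearing in the $\nu=\mu=1$ corner.

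The main obstacle will be bookkeeping the interaction of the two parities $\nu$ and $\mu$ simultaneously: the largest-part bound contributes $\nu$ and the parts-number bound contributes $\mu$, and these couple in a way absent from Theorem~\ref{finiteBoulet} (where only $\nu$ appeared) and from the $q$-binomial specialization \eqref{qBinCoeff}. The excluded case $(\nu,\mu)=(1,0)$ signals that the decomposition degenerates precisely when the largest part is forced odd but the number of parts is forced even, so I would verify that for the three admissible parity combinations the ranges of $m_1,m_2,n$ and the index shifts in the $q$-binomial coefficients $\genfrac{[}{]}{0pt}{}{M-k+\mu-\nu}{m_1}_{Q}$ and $\genfrac{[}{]}{0pt}{}{M-k+j-m_1}{j-m_1}_{Q}$ are correct. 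A clean way to pin down the combinatorial constants is to cross-check against known specializations: setting $M\to\infty$ should collapse \eqref{generalYeeEQN} back to \eqref{finiteBouletPHI}, and $(a,b,c,d)=(q,q,q,q)$ should recover the doubly bounded $q$-binomial coefficient via \eqref{qBinCoeff} and the recurrence \eqref{restOfPhi}. Once the $\nu,\mu$ case distinctions are settled, the remainder is a routine summation over $k$ of the product of the $\widetilde{\pi}^*$ and $\widetilde{\pi}'$ generating functions, mirroring the final step in the proof of Theorem~\ref{finiteBoulet}.
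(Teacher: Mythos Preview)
There is a genuine gap: the map $\rho^*$ you propose is the decomposition underlying Theorem~\ref{boundedFiniteBoulet}, not Theorem~\ref{generalYee}. These two theorems give \emph{different} expressions for the same function $\Phi_{2N+\nu,2M+\mu}$, obtained by two different combinatorial dissections, and the paper explicitly flags that Theorem~\ref{generalYee} arises by generalizing Yee's construction, not Boulet's. Concretely, your weight computation for $\widetilde{\pi}^*$ is wrong: a pair of odd-height columns of height $2h+1$ has decoration $a,c,a,\dots,a$ in the first column and $b,d,b,\dots,b$ in the second, so its weight is $(ab)Q^{h}$, and $k$ such pairs contribute a factor $(ab)^k$, not $(ac)^k$. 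That is precisely why $(ab)^{N-l}$ sits in the outer sum of \eqref{boundedFiniteBouletEQN}. You have also inverted the bounds: column heights are capped by the number of parts $2M+\mu$, not by the largest part $2N+\nu$; it is the \emph{number} of extracted columns that is governed by $N$.

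To obtain the outer factor $(ac)^k\genfrac{[}{]}{0pt}{}{N+k-1+\nu}{k}_Q$ in \eqref{generalYeeEQN} you need the dual (row-based) dissection. A pair of consecutive rows of the same \emph{odd length} $2m+1$ carries weight $(ab)^m a\cdot(cd)^m c=(ac)Q^m$, and $k$ such pairs with each length $\le 2N+\nu$ (so $m\le N-1+\nu$) have generating function exactly $(ac)^k\genfrac{[}{]}{0pt}{}{N-1+\nu+k}{k}_Q$; the bound $k\le M$ then comes from the part-count restriction. This is the first step of Yee's decomposition. The remaining partition has at most $2(M-k)+\mu$ parts and no repeated odd part in consecutive positions, and it is \emph{that} object whose finitized generating function produces the inner $j,m_1,m_2$ sums. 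Your plan of ``finitizing \eqref{a_abc_ac_Q_parts_GF}'' targets the wrong residual set $\widetilde{D}$ and will regenerate \eqref{boundedFiniteBouletEQN} rather than \eqref{generalYeeEQN}. You should instead work out the bounded generating function for the residual partition in Yee's row decomposition; the roles of $N$ and $M$ (and of $ab$ versus $ac$) are essentially transposed relative to what you wrote.
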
 

Theorem~\ref{boundedFiniteBoulet} and Theorem~\ref{generalYee} give different expressions for the same function. Also note that the idea behind \eqref{restOfPhi} can be applied to Theorem~\ref{generalYee} for the missing combination of bounds. This is left as an exercise for the interested reader.

Another important result comes from knowing the combinatorial generating function interpretations of the function $\Psi_N(a,b,c,d)$. By looking at different choices of the variables, we can find $q$-series identities that were not combinatorially interpreted before. One finding of this type is a $q$-hypergeometric identity of Berkovich and Warnaar, \cite[(3.30)]{WarnaarBerkovich}, and it's analogue.

For $\nu\in\{0,1\}$, it is obvious that $\Psi_{2N+\nu}(aq,q/a,aq,q/a)$ is the generating function of partitions into distinct parts less than or equal to $2N+\nu$, where the exponent of $a$ counts the number of odd parts in the partitions. Clearly, \begin{equation}\label{PsiDistinctProduct}\Psi_{2N+\nu}(aq,q/a,aq,q/a) = (-aq;q^2)_{N+\nu}(-q^2;q^2)_N .\end{equation} 
Using \eqref{PSI2Nnu} yields \begin{equation}\label{BerkoWarnaarPSI}\Psi_{2N+\nu}(aq,q/a,aq,q/a) = (-aq;q^4)_{N+\nu}\ {}_2\phi_1 \left(\genfrac{}{}{0pt}{}{q^{-4N},\ -aq}{-(aq)^{-1}q^{-4N-\nu}};q^4,-q^{1+4\delta_{\nu,0}}/a \right).\end{equation} 
Comparing \eqref{PsiDistinctProduct} and \eqref{BerkoWarnaarPSI} gives

\begin{theorem}\label{BerkovichWarnaar} For a non-negative integer $N$ and variables $a$ and $q$ \[ {}_2\phi_1 \left(\genfrac{}{}{0pt}{}{q^{-4N},\ -aq}{-(aq)^{-1}q^{-4(N-\delta_{\nu,0})}};q^4,-q^{1+4\delta_{\nu,0}}/a \right) = \frac{(-q^2;q^2)_N(-aq^3;q^2)_{N-1+\nu}}{(-aq^5;q^4)_{N-1+\nu}},\]where $\nu\in\{0,1\}$ and $\delta_{i,j}$ is the \textit{Kronecker delta function}.\end{theorem}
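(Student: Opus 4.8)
The plan is to obtain Theorem~\ref{BerkovichWarnaar} by equating two different evaluations of the same polynomial $\Psi_{2N+\nu}(aq,q/a,aq,q/a)$, exactly as the surrounding narrative suggests. First I would verify the product evaluation \eqref{PsiDistinctProduct}. Here $\Psi_{2N+\nu}(aq,q/a,aq,q/a)$ is the generating function for partitions into distinct parts $\leq 2N+\nu$, weighted so that the exponent of $a$ records the number of odd parts and the exponent of $q$ records the norm. Splitting each such partition into its odd parts and its even parts, the odd parts (distinct, odd, $\leq 2N+\nu$, each carrying a factor $a$) contribute $(-aq;q^2)_{N+\nu}$, while the even parts (distinct, even, $\leq 2N+\nu$) contribute $(-q^2;q^2)_N$; this gives \eqref{PsiDistinctProduct}. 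I would double-check the index counts $N+\nu$ and $N$ by tracking how many odd and even values lie in $\{1,\dots,2N+\nu\}$ for each $\nu\in\{0,1\}$.

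Next I would derive the hypergeometric evaluation \eqref{BerkoWarnaarPSI} directly from the Ishikawa--Zeng formula \eqref{PSI2Nnu}. Substituting $(a,b,c,d)=(aq,q/a,aq,q/a)$ one has $Q=abcd=q^4$, $ab=q^2$, and $-a\mapsto -aq$, $-c\mapsto -aq$, so
\[
\Psi_{2N+\nu}(aq,q/a,aq,q/a) = \sum_{i=0}^N \genfrac{[}{]}{0pt}{}{N}{i}_{q^4} (-aq;q^4)_{N-i+\nu}(-aq;q^4)_{i}\, q^{2i}.
\]
The task is then to recognize this finite sum, after factoring out $(-aq;q^4)_{N+\nu}$, as a ${}_2\phi_1$ in base $q^4$. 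I would rewrite $(-aq;q^4)_{N-i+\nu}/(-aq;q^4)_{N+\nu}$ and $(-aq;q^4)_i$ using the standard reversal identity $(x;Q)_{N-i}=(x;Q)_N/\bigl((Q^{1-N}/x;Q)_i\,(-1/x)^i Q^{-Ni+\binom{i}{2}}\bigr)$, together with $\genfrac{[}{]}{0pt}{}{N}{i}_{q^4}=(q^{-4N};q^4)_i(-1)^iq^{4Ni-4\binom{i}{2}}/(q^4;q^4)_i$, to convert each term into the summand of a ${}_2\phi_1$ with numerator parameters $q^{-4N}$ and $-aq$. Collecting all prefactors of $q$ and $a$ should produce the displayed argument $-q^{1+4\delta_{\nu,0}}/a$ and the lower parameter $-(aq)^{-1}q^{-4(N-\delta_{\nu,0})}$; I expect this bookkeeping of exponents to be the main obstacle, since the $\nu$-dependence enters both the length shift $+\nu$ in the Pochhammer symbols and the $\delta_{\nu,0}$ shifts in the parameters, and the two $\nu$ cases must be reconciled into the single unified formula stated in the theorem.

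Finally, comparing \eqref{PsiDistinctProduct} with \eqref{BerkoWarnaarPSI} isolates the ${}_2\phi_1$ on one side. Dividing the product $(-aq;q^2)_{N+\nu}(-q^2;q^2)_N$ by the prefactor $(-aq;q^4)_{N+\nu}$ that accompanies the ${}_2\phi_1$ in \eqref{BerkoWarnaarPSI}, and simplifying using the base-splitting relation $(-aq;q^2)_{N+\nu}=(-aq;q^4)_{\lceil(N+\nu)/2\rceil}(-aq^3;q^4)_{\lfloor(N+\nu)/2\rfloor}$ — or more directly $(-aq;q^2)_{N+\nu}/(-aq;q^4)_{N+\nu}=(-aq^3;q^2)_{N-1+\nu}/(-aq^5;q^4)_{N-1+\nu}$ after cancellation — yields the right-hand side $\dfrac{(-q^2;q^2)_N(-aq^3;q^2)_{N-1+\nu}}{(-aq^5;q^4)_{N-1+\nu}}$. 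I would confirm the lower parameter in the theorem, written as $-(aq)^{-1}q^{-4(N-\delta_{\nu,0})}$, agrees with $-(aq)^{-1}q^{-4N-\nu}$ from \eqref{BerkoWarnaarPSI} for each $\nu$, and check the whole identity numerically at a small value such as $N=1$ to catch any sign or exponent slip before finalizing.
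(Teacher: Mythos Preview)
Your approach is essentially identical to the paper's: both evaluate $\Psi_{2N+\nu}(aq,q/a,aq,q/a)$ combinatorially as the product \eqref{PsiDistinctProduct} and, via the Ishikawa--Zeng formula \eqref{PSI2Nnu}, as the ${}_2\phi_1$ in \eqref{BerkoWarnaarPSI}, and then compare the two expressions. The paper's argument is the one line ``Comparing \eqref{PsiDistinctProduct} and \eqref{BerkoWarnaarPSI} gives'', while you simply spell out the Pochhammer bookkeeping and parameter checks in more detail.
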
 

The case $\nu=1$, with $N\mapsto n$, $a\mapsto -a/q$ and $q^2\mapsto q$ in Theorem~\ref{BerkovichWarnaar}, is \cite[(3.30)]{WarnaarBerkovich} and $\nu=0$ is the easy analogue we get using \eqref{PsiDistinctProduct}.

\section{Outlook}

Theorems \ref{BasicCorollaryComb}, \ref{distinctBG} and \ref{unrestrictedBG} call for combinatorial proofs, which we will discuss elsewhere.

We have found the doubly bounded versions of Theorem~\ref{distinctBG} and Theorem~\ref{combHermiteSum}. Let $N$ and $M$ be non-negative integers. Note that $\Phi_{N,M}(qt,q/t,q/t,qt)$ is the generating function for the number of partitions into parts $\leq N$ and number of parts $\leq M$, where the exponent of $t$ counts the BG-rank of counted partitions. Recall that $\Phi_{N,M}(zq,zq,q/z,q/z)$ is the generating function for the number of partitions into parts $\leq N$ and number of parts $\leq M$, where the exponent of $z$ is the alternating sum of parts of counted partitions. 

\begin{proposition}\label{BG_Double_Bounded} Let $\nu,\ \mu \in\{0,1\}$ and $2N+\nu$, $2M+\mu$ be positive integers, then \begin{equation}\label{BG_double_bdd_eqn}\Phi_{2N+\nu,2M+\mu}(qt,q/t,q/t,qt) = \sum_{j=-N}^{N+\nu} t^j q^{2j^2-j} \genfrac{[}{]}{0pt}{}{N+M+\mu}{N+j}_{q^2} \genfrac{[}{]}{0pt}{}{N+\nu+M}{M+j}_{q^2}.\end{equation}
\end{proposition}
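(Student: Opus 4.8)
The plan is to interpret the left-hand side combinatorially and then evaluate it through the $2$-quotient (domino) decomposition of a partition. First I would record, as indicated in the Outlook, that under $(a,b,c,d)=(qt,q/t,q/t,qt)$ every box contributes a single factor of $q$, so the total $q$-exponent is $|\lambda|$, while a residue-$0$ box contributes $t^{+1}$ and a residue-$1$ box contributes $t^{-1}$; hence the $t$-exponent is $r_0-r_1=BG(\lambda)$. Thus
\[\Phi_{2N+\nu,2M+\mu}(qt,q/t,q/t,qt)=\sum_{\lambda}t^{BG(\lambda)}q^{|\lambda|},\]
the sum ranging over partitions $\lambda$ with at most $2M+\mu$ parts and largest part at most $2N+\nu$. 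It then suffices to show that the coefficient of $t^j$ equals $q^{2j^2-j}\genfrac{[}{]}{0pt}{}{N+M+\mu}{N+j}_{q^2}\genfrac{[}{]}{0pt}{}{N+\nu+M}{M+j}_{q^2}$.

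Next I would encode each such $\lambda$ by its beta-numbers. Padding $\lambda$ with zeros to exactly $s:=2M+\mu$ parts and setting $\beta_i:=\lambda_i+(s-i)$ gives a bijection between our partitions and the $s$-element subsets $B$ of $\{0,1,\dots,L-1\}$, where $L:=2N+\nu+2M+\mu$; here $|\lambda|=\sum_{b\in B}b-\binom{s}{2}$. Splitting $B$ according to parity is precisely the $2$-quotient construction: writing $B_0,B_1$ for the even and odd elements and $s_0:=|B_0|$, $s_1:=|B_1|$, the halved beta-numbers $\{b/2:b\in B_0\}$ and $\{(b-1)/2:b\in B_1\}$ are the beta-sets of two partitions $\mu^{(0)},\mu^{(1)}$ fitting, respectively, inside boxes determined by $(s_0,E)$ and $(s_1,O)$, where $E$ and $O$ count the even and odd integers in $\{0,\dots,L-1\}$. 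A short computation gives $|\lambda|=\tfrac{(s_0-s_1)^2-(s_0-s_1)}{2}+2\bigl(|\mu^{(0)}|+|\mu^{(1)}|\bigr)$, so summing over the quotients with $s_0,s_1$ held fixed produces the two Gaussian binomials in base $q^2$, namely $q^{(s_0-s_1)(s_0-s_1-1)/2}\genfrac{[}{]}{0pt}{}{E}{s_0}_{q^2}\genfrac{[}{]}{0pt}{}{O}{s_1}_{q^2}$.

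The decisive step is to pin down how $BG(\lambda)$ and the $q$-prefactor depend on $(s_0,s_1)$. Here I would use that adding a domino to $\lambda$ both preserves $r_0-r_1$ (a domino always covers one cell of each residue) and shifts a single beta-number by $2$ without changing its parity; hence both $BG(\lambda)$ and $s_0-s_1$ are invariants of the $2$-core, so $BG(\lambda)$ is a function of $s_0-s_1$ once $s$ is fixed. Reading this function off the staircase $2$-cores gives $s_0-s_1=2j$ when $\mu=0$ and $s_0-s_1=1-2j$ when $\mu=1$, where $j=BG(\lambda)$; in either case $\tfrac{(s_0-s_1)^2-(s_0-s_1)}{2}=2j^2-j$, which matches the prefactor. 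Finally I would substitute the resulting values of $s_0,s_1,E,O$ into $\genfrac{[}{]}{0pt}{}{E}{s_0}_{q^2}\genfrac{[}{]}{0pt}{}{O}{s_1}_{q^2}$ and check the four parities of $(\nu,\mu)$ separately; each collapses to $\genfrac{[}{]}{0pt}{}{N+M+\mu}{N+j}_{q^2}\genfrac{[}{]}{0pt}{}{N+\nu+M}{M+j}_{q^2}$, and the summation range $-N\le j\le N+\nu$ is exactly the support where these binomials are nonzero.

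I expect the main obstacle to be the parity bookkeeping in this last step: the quantity $s_0-s_1$ is \emph{not} invariant under zero-padding even though $BG(\lambda)$ is, so the linear relation between them carries a $\mu$-dependent shift, and the widths $E,O$ together with the counts $s_0,s_1$ must be tracked carefully through all four cases of $(\nu,\mu)$ in order to land on the stated right-hand side. A less transparent alternative would bypass the bijection entirely by substituting $(a,b,c,d)=(qt,q/t,q/t,qt)$, so that $Q=q^4$, directly into the explicit formula of Theorem~\ref{generalYee} and collapsing the resulting multiple sum by repeated use of the $q$-Vandermonde summation; I would nonetheless keep the bijective argument above as the primary proof.
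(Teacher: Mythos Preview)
The paper does not actually prove this proposition: it appears in the Outlook section with the remark that the results ``will be discussed elsewhere,'' so there is no argument in the paper to compare against.

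Your $2$-quotient (beta-number) approach is correct and would constitute a full proof. The identification of the weight at $(a,b,c,d)=(qt,q/t,q/t,qt)$ with $q^{|\lambda|}t^{BG(\lambda)}$ is exactly what the paper indicates in the Outlook; the norm formula
\[
|\lambda|=\tfrac{(s_0-s_1)^2-(s_0-s_1)}{2}+2\bigl(|\mu^{(0)}|+|\mu^{(1)}|\bigr)
\]
follows from $\sum_{b\in B}b-\binom{s}{2}$ after splitting $B$ by parity; and your $\mu$-dependent relation $s_0-s_1=2j$ (for $\mu=0$) or $s_0-s_1=1-2j$ (for $\mu=1$) is right and gives the prefactor $2j^2-j$ in both cases. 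In the final substitution, the four $(\nu,\mu)$ cases do collapse to the stated product after one use of $\genfrac{[}{]}{0pt}{}{n}{k}_{q^2}=\genfrac{[}{]}{0pt}{}{n}{n-k}_{q^2}$; for instance, when $(\nu,\mu)=(0,1)$ one has $E=N+M+1$, $O=N+M$, $s_0=M+1-j$, $s_1=M+j$, and $\genfrac{[}{]}{0pt}{}{N+M+1}{M+1-j}_{q^2}=\genfrac{[}{]}{0pt}{}{N+M+1}{N+j}_{q^2}$ as required. Your flagged obstacle (the $\mu$-dependent offset between $BG(\lambda)$ and $s_0-s_1$ caused by zero-padding) is real but you have already resolved it.

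In short, your bijective argument supplies precisely what the paper defers, and is more transparent than the $q$-Vandermonde reduction of Theorem~\ref{generalYee} that you mention as an alternative; the paper itself frames that comparison only as a source of \emph{new} multi-sum identities, not as a proof of the proposition.
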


We remark that the products of $q$-binomial coefficients that are similar to those on the right-hand side of \eqref{BG_double_bdd_eqn} were studied by Burge in \cite{Burge}.

\begin{proposition}\label{AS_Double_Bounded} Let $\mu \in\{0,1\}$ and $N$, $2M+\mu$ be positive integers, then \[\Phi_{N,2M+\mu}(zq,zq,q/z,q/z) = \sum_{j=0}^{N} z^j q^{j} \genfrac{[}{]}{0pt}{}{M+\mu+j-1}{j}_{q^2} \genfrac{[}{]}{0pt}{}{M+N-j}{M}_{q^2}.\]
\end{proposition}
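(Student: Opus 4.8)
\textbf{Proof proposal for Proposition~\ref{AS_Double_Bounded}.}

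The plan is to combinatorially identify $\Phi_{N,2M+\mu}(zq,zq,q/z,q/z)$ using the same doubly-bounded bijective framework developed for Theorem~\ref{boundedFiniteBoulet} and Theorem~\ref{generalYee}, then to recognize that the specialization $(a,b,c,d)=(zq,zq,q/z,q/z)$ collapses the four-variable weight to the alternating-sum statistic. Recall from the discussion preceding \eqref{RogersSzegoToPSI} that under this choice $Q=abcd=q^4$ becomes $q^4$ while $ab=zq\cdot zq = z^2q^2$, $ac = zq\cdot q/z = q^2$, $abc = zq\cdot zq\cdot q/z = zq^3$, and $-a = -zq$; so every product appearing in the Boulet-Stanley machinery turns into a genuine $q$-series in base $q^2$ with $z$ tracking precisely the alternating sum $\gamma(\pi)$, exactly as was observed for $\Psi$ in \eqref{extractionPHI}. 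The key point is that we do \emph{not} need to re-run the full combinatorial extraction: we may instead start from the explicit formula \eqref{generalYeeEQN} of Theorem~\ref{generalYee} (valid for $(\nu,\mu)\neq(1,0)$, which covers all cases here since $N$ is unrestricted and we may take $\nu\in\{0,1\}$ freely), substitute $(a,b,c,d)=(zq,zq,q/z,q/z)$, and simplify.

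First I would carry out the substitution in \eqref{generalYeeEQN}. With $Q=q^4$, $ac=q^2$, $ab=z^2q^2$, $a=zq$, $c=q/z$, the outer sum over $k$ contributes $(ac)^k=q^{2k}$ against $\genfrac{[}{]}{0pt}{}{N+k-1+\nu}{k}_{q^4}$, and the inner double sum over $m_1,m_2$ produces powers $a^{m_1}c^{m_2}=z^{m_1-m_2}q^{m_1+m_2}$ times $q^{4\binom{m_1}{2}}$, $q^{4\binom{m_2}{2}}$, and the indicated $q^4$-binomials. The decisive simplification I expect is that, after collecting the $z$-degree, the triple inner structure telescopes via the $q$-binomial theorem and the $q$-Vandermonde / $q$-Chu summations (as in \cite[(II.3),(II.6),(II.7)]{GasperRahman}), reducing each of the three inner sums to a single closed product. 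The target right-hand side $\sum_{j=0}^N z^j q^j \genfrac{[}{]}{0pt}{}{M+\mu+j-1}{j}_{q^2}\genfrac{[}{]}{0pt}{}{M+N-j}{M}_{q^2}$ suggests that $j$ should be identified with the net $z$-exponent, i.e.\ the alternating sum, and that the first $q^2$-binomial counts the ``even-repeated odd-height column'' part (the $\wTilde E$ factor giving $\genfrac{[}{]}{0pt}{}{M+\mu+j-1}{j}_{q^2}$) while the second counts the bounded distinct-with-difference-conditions part $\wTilde D$.

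An alternative and likely cleaner route, which I would pursue in parallel, is fully bijective and parallels the derivation of \eqref{extraction}. One specializes the \emph{unbounded} relation \eqref{RogersSzegoToPSI}, namely $H_N(zq,q^2)=\Psi_N(zq,zq,q/z,q/z)$, to its doubly-bounded analogue; concretely, I would use the map $\rho^*_{N}$ of Section~\ref{section6} to write $\Phi_{2N+\nu,2M+\mu}(zq,zq,q/z,q/z)$ as a sum over the number $k$ of extracted column-pairs of a product of two bounded generating functions. Under the alternating-sum specialization the extracted-part generating function \eqref{abPartsGF} becomes the $q^2$-binomial $\genfrac{[}{]}{0pt}{}{M+\mu+j-1}{j}_{q^2}$ tracking $z^j q^j$, and the residual bounded-distinct generating function \eqref{a_abc_ac_Q_parts_GF} becomes $\genfrac{[}{]}{0pt}{}{M+N-j}{M}_{q^2}$ after applying the identity \eqref{extraction} fiberwise in the doubly-bounded setting.

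The hard part will be the bookkeeping of the bound $2M+\mu$ on the number of parts under the $(\nu,\mu)$ specialization: in the analytic route, verifying that the three nested sums in \eqref{generalYeeEQN} collapse to the clean product of two $q^2$-binomials requires a careful application of the summation formulas, and one must track how the parameter $\mu$ (and the excluded case $(\nu,\mu)=(1,0)$) interacts with the shift $N\mapsto N$ being left unrestricted here. I expect the main obstacle to be confirming that the $z$-degree in the collapsed expression is exactly $j$ with coefficient $q^j$ and no extraneous $\nu$-dependence survives—analogous to the pleasant $\nu$-independence already noted after \eqref{qNewLittleGollnitz2}. Once the $z^j$-coefficient is isolated and matched against the bounded version of \eqref{extraction}, the proposition follows by comparing coefficients of $z^j$ on both sides.
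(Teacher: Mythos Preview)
The paper does not supply a detailed proof of Proposition~\ref{AS_Double_Bounded}; it is stated in the Outlook section. However, the surrounding text makes the intended argument clear, and it is \emph{not} either of your two routes. Immediately after Proposition~\ref{DoubleBDD_AS_Combinat_result} the paper observes that the partitions counted by $\mathcal{A}_{2M+\mu,N}(n,k)$ and $\mathcal{B}_{N,2M+\mu}(n,k)$ are conjugates of one another, the alternating sum $\gamma(\pi)$ becoming the number of odd parts under conjugation (exactly as in the proof of Theorem~\ref{CombPHIExtraction}). That single observation \emph{is} the proof: $\Phi_{N,2M+\mu}(zq,zq,q/z,q/z)$ is, by conjugation, the generating function for partitions into at most $N$ parts each $\le 2M+\mu$ with $z$ tracking the number of odd parts; splitting such a partition into its $j$ odd parts (each $\le 2M+\mu$, contributing $z^jq^j\genfrac{[}{]}{0pt}{}{M+\mu+j-1}{j}_{q^2}$) and its at most $N-j$ even parts (each $\le 2M$, contributing $\genfrac{[}{]}{0pt}{}{M+N-j}{M}_{q^2}$) gives the right-hand side instantly.

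Your first route, substituting into \eqref{generalYeeEQN} and collapsing the four-fold sum, is exactly backwards relative to the paper's logic: the sentence following the two propositions says that comparing them with \eqref{boundedFiniteBouletEQN} or \eqref{generalYeeEQN} \emph{produces new $q$-series identities} to be treated elsewhere. In other words, the single-sum formula is proved independently (by the conjugation argument above), and equating it with the specialized four-fold sum is the \emph{output}, not the method. Attempting to simplify \eqref{generalYeeEQN} directly is therefore the hard, non-obvious direction.

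Your second route has a concrete error. Under $(a,b,c,d)=(zq,zq,q/z,q/z)$ one has $ab=z^2q^2$, so the extracted-column factor \eqref{abPartsGF} becomes $(z^2q^2)^k/(q^4;q^4)_k$, carrying only \emph{even} powers of $z$; it cannot produce the term $z^jq^j$ you claim. The summation index in the $\rho^*$ decomposition is the number of extracted odd-height column \emph{pairs}, which is not the alternating sum $j$. The map $\rho^*$ is simply not adapted to isolating $\gamma(\pi)$; conjugation is.
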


\noindent Both of these results, compared with either \eqref{boundedFiniteBouletEQN} or \eqref{generalYeeEQN}, with their appropriate choices of variables, give us new $q$-series identities relating single-fold sums and four-fold sums. These identities will be discussed elsewhere.

It is easy to check that $M\rightarrow\infty$ in the Propositions \ref{BG_Double_Bounded} and \ref{AS_Double_Bounded}, implies \eqref{unrestrictedBGtermWithoutT} and \eqref{extractionPHI}, respectively. Proposition~\ref{BG_Double_Bounded} is symmetric with respect to its bounds on the number of parts and largest parts, which is expected because the BG-rank of a partition is invariant under conjugation. Hence, as $N\rightarrow\infty$ we see that Proposition~\ref{BG_Double_Bounded} implies \eqref{unrestrictedBGtermWithoutT}, as in the case of $M\rightarrow\infty$. 

In Proposition~\ref{AS_Double_Bounded}, by extracting the coefficient of $z^k$, we get a new partition identity.

\begin{proposition}\label{DoubleBDD_AS_Combinat_result} Let $\mu\in\{0,1\}$, $N$ and $2M+\mu$ positive integers, and $n$ and $k$ be non-negative integers. Then, \[\mathcal{A}_{2M+\mu,N}(n,k) = \mathcal{B}_{N,2M+\mu}(n,k), \]
where $\mathcal{A}_{2M+\mu,N}(n,k) $ is the number of partitions into parts $\leq 2M+\mu$, with no more than $N$ parts where exactly $k$ parts are odd, and $\mathcal{B}_{N,2M+\mu}(n,k) $ is the number of partitions of $n$ into parts $\leq N$, with no more than $2M+\mu$ parts with alternating sum of parts equal to $k$.
\end{proposition}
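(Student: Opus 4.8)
Proposition~\ref{DoubleBDD_AS_Combinat_result} asserts the equality of two partition-counting functions, so the cleanest route is to exhibit an explicit bijection rather than to manipulate generating functions. The plan is to realize the stated equality as a conjugation of Ferrers diagrams, exactly mirroring the argument already given for Theorem~\ref{CombPHIExtraction}. Recall that immediately after that theorem the paper observes: \emph{the number of odd parts in a partition turns into the alternating sum of parts in the conjugate of this partition.} This single observation is the engine of the whole proof; the only new content here is bookkeeping of the two simultaneous bounds.

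First I would fix a partition $\pi$ counted by $\mathcal{A}_{2M+\mu,N}(n,k)$: a partition of $n$ with largest part $\leq 2M+\mu$, at most $N$ parts, and exactly $k$ odd parts. I would take its conjugate $\pi'$. Conjugation swaps the role of the largest part and the number of parts, so $\pi'$ has largest part $\leq N$ and at most $2M+\mu$ parts; this matches the bounds defining $\mathcal{B}_{N,2M+\mu}(n,k)$, and of course $|\pi'| = |\pi| = n$. The substantive step is to verify that the statistic ``number of odd parts of $\pi$'' equals ``alternating sum of parts of $\pi'$.'' I would prove this by the standard column-counting argument: the number of odd parts of $\pi$ is the number of rows of odd length in the Ferrers diagram, which under conjugation becomes the number of columns of odd length in the diagram of $\pi'$; counting the columns of the conjugate diagram by rows and reading off residues mod $2$ telescopes precisely into $\lambda'_1 - \lambda'_2 + \lambda'_3 - \cdots = \gamma(\pi')$. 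Hence $k$ is preserved, and conjugation is an involution, so it is a bijection between the two sets.

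The main obstacle I anticipate is not the parity identity, which is routine, but confirming that the two bounds transpose \emph{correctly} under conjugation, i.e.\ that a largest part $\leq 2M+\mu$ together with $\leq N$ parts maps exactly to a largest part $\leq N$ together with $\leq 2M+\mu$ parts, with no off-by-one slippage in the $\mu$ and $\nu$ parities. Since conjugation exchanges ``largest part $\leq A$'' with ``number of parts $\leq A$'' verbatim, this is a clean swap: the pair of constraints $(\text{size}\leq 2M+\mu,\ \#\text{parts}\leq N)$ becomes $(\text{size}\leq N,\ \#\text{parts}\leq 2M+\mu)$, which is precisely the defining pair for $\mathcal{B}_{N,2M+\mu}$. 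I would therefore state the proof compactly: apply conjugation, invoke the parity-to-alternating-sum identity already used for Theorem~\ref{CombPHIExtraction}, and note the transposition of bounds.

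As an alternative verification (and a useful consistency check), I would observe that the identity also follows analytically: Proposition~\ref{AS_Double_Bounded} furnishes $\Phi_{N,2M+\mu}(zq,zq,q/z,q/z)$ as the generating function whose coefficient of $z^k$ is $\sum_n \mathcal{B}_{N,2M+\mu}(n,k)\,q^n$, while the conjugate reading of the same weighted diagram sum gives $\sum_n \mathcal{A}_{2M+\mu,N}(n,k)\,q^n$ as the $z^k$-coefficient, since the weight $\omega_\pi(zq,zq,q/z,q/z)$ tracks $q^{|\pi|}z^{\gamma(\pi)}$ and $\gamma$ on one diagram equals the odd-part count on its conjugate. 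Extracting $\llfloor z^k\rrfloor$ from both readings equates the two counting functions for every $n$, which is exactly the claim. The bijective proof is the cleaner of the two and is what I would present.
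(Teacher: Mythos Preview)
Your proposal is correct and matches the paper's own argument: the paper derives the proposition by extracting the $z^k$ coefficient from Proposition~\ref{AS_Double_Bounded} and then explicitly notes that, ``similar to the case of Theorem~\ref{CombPHIExtraction}, the partitions counted by $\mathcal{A}$ and $\mathcal{B}$ are conjugates of each other.'' Your conjugation argument with the odd-parts-to-alternating-sum identity is exactly this, and your analytic backup via $\Phi_{N,2M+\mu}(zq,zq,q/z,q/z)$ is precisely the paper's lead-in, so both routes you sketch coincide with what the authors do.
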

We illustrate this result in Table~\ref{LastExampleTable}

\begin{center}
\begin{table}[htb] \caption{$\mathcal{A}_{5,3}(10,2)$ and $\mathcal{B}_{3,5}(10,2)$ with respective partitions for Proposition~\ref{DoubleBDD_AS_Combinat_result}}\begin{tabular}{cc}
$\mathcal{A}_{5,3}(10,2)=4$ : & $\begin{array}{c}
(5,5),\ (5,4,1),\ (5,3,2),\ (4,3,3).\vspace*{.1cm}
\end{array}$ \\ 
\\ [-1.5ex]
$\mathcal{B}_{3,5}(10,2)=4$ : & $\begin{array}{c}\vspace*{.1cm}(3,3,3,1),\ (3,3,2,1,1),\ (3,2,2,2,1),\ (2,2,2,2,2).\end{array}$ 
\end{tabular}\label{LastExampleTable}  
\end{table}
\end{center}
\noindent
The set of partitions that appears in Table~\ref{LastExampleTable}, is a subset of the ones in Table~\ref{TableExample_N_n_k}. 

Proposition~\ref{DoubleBDD_AS_Combinat_result} is a generalized version of Theorem~\ref{CombPHIExtraction} with the extra bound on the number of parts. There is an obvious connection between notations of Theorem~\ref{CombPHIExtraction} and Proposition~\ref{DoubleBDD_AS_Combinat_result}, $\mathcal{A}_{M,N}(n,k)\rightarrow \mathcal{A}_{N}(n,k)$ ($\mathcal{B}_{N,M}(n,k)\rightarrow \mathcal{B}_{N}(n,k)$), as $M\rightarrow\infty$. Similar to the case of Theorem~\ref{CombPHIExtraction}, the partitions counted by $\mathcal{A}_{N,2M+\mu}(n,k)$ and $\mathcal{B}_{2M+\mu,N}(n,k) $ are conjugates of each other. 

The approach used in Section~\ref{section2} can be generalized to apply for more general generating functions. Let $\wTilde{P}_{N}(i,j,m,q)$ be the generating function for the number of partitions into distinct parts $\leq N$ with $i$ odd-indexed, $j$ even-indexed odd parts, and $m$ even parts. Obviously we have \[\sum_{m\geq 0} \wTilde{P}_{N}(i,j,m,q) = P_{N}(i,j,q),\] where $P_{N}(i,j,q)$ was defined in Section~\ref{section2}.

Authors intend to discuss these more general generating functions elsewhere. To this end, we already proved that
\begin{proposition}\label{StrongProp} Let $i$, $j$, $m$ and $N$ be non-negative integers. Then,
\begin{align*}
\wTilde{P}_{2N+\nu}(0,j,m,q) &= q^{j(j+1)+m(m+1)-j(-1)^{m+j}} \genfrac{[}{]}{0pt}{}{N+j}{j+m}_{q^2} \genfrac{[}{]}{0pt}{}{\lfloor \frac{m+j}{2} \rfloor }{j}_{q^4},\\
\wTilde{P}_{2N+1}(i,0,m,q) &= q^{i(i+1)+m(m+1)+i(-1)^{m+i}} \genfrac{[}{]}{0pt}{}{N+i}{i+m}_{q^2} \genfrac{[}{]}{0pt}{}{\lceil \frac{m+i}{2} \rceil }{i}_{q^4},\\
\wTilde{P}_{2N}(i,0,m,q)  &=q^{i(i+1)+m(m+1)+i(-1)^{m+i}} \genfrac{[}{]}{0pt}{}{N+i-1}{i+m}_{q^2} \genfrac{[}{]}{0pt}{}{\lceil \frac{m+i}{2} \rceil }{i}_{q^4} \\  &\hspace{1cm}+q^{i(i+1)+m(m-1) +i(-1)^{m+i}+2N} \genfrac{[}{]}{0pt}{}{N+i-1}{i+m-1}_{q^2} \genfrac{[}{]}{0pt}{}{\lfloor \frac{m+i-1}{2} \rfloor }{i}_{q^4}, \end{align*}
where $\nu\in\{0,1\}$.
\end{proposition}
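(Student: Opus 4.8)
The plan is to prove all three formulas simultaneously by the largest‑part extraction method already used for Theorem~\ref{PgenFunc}, now refined to keep track of the extra statistic $m$. First I would set up the general recurrence. If $\wTilde\pi$ is a partition contributing to $\wTilde P_{2N+\nu}(i,j,m,q)$ whose largest part is $<2N+\nu$, it contributes with bound $2N+\nu-1$; otherwise the largest part equals $2N+\nu$, and removing it (weight $q^{2N+\nu}$) shifts every index by one, so odd‑indexed and even‑indexed odd parts switch roles. Since an even largest part lowers $m$ by one while an odd largest part lowers the number of odd‑indexed odd parts by one, tracking the parity of the removed part gives
\begin{align*}
\wTilde P_{2N}(i,j,m,q) &= \wTilde P_{2N-1}(i,j,m,q) + q^{2N}\chi(m\geq 1)\,\wTilde P_{2N-1}(j,i,m-1,q),\\
\wTilde P_{2N+1}(i,j,m,q) &= \wTilde P_{2N}(i,j,m,q) + q^{2N+1}\chi(i\geq 1)\,\wTilde P_{2N}(j,i-1,m,q).
\end{align*}

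The key structural observation is that the two families $\wTilde P_{M}(0,j,m,q)$ and $\wTilde P_{M}(i,0,m,q)$ form a \emph{closed} system under these recurrences: the swap sends a leading $0$ to a trailing $0$, so substituting $i=0$ or $j=0$ maps one family into the other, and the $i=0$ odd‑bound recurrence degenerates to $\wTilde P_{2N+1}(0,j,m,q)=\wTilde P_{2N}(0,j,m,q)$ because $\chi(i\geq1)$ vanishes at $i=0$; this explains why the first claimed formula carries no dependence on $\nu$. Writing $A_M(j,m):=\wTilde P_M(0,j,m,q)$ and $B_M(i,m):=\wTilde P_M(i,0,m,q)$, the system to be solved is $A_{2N}=A_{2N-1}+q^{2N}\chi(m\geq1)B_{2N-1}(j,m-1)$, $\;A_{2N+1}=A_{2N}$, $\;B_{2N}=B_{2N-1}+q^{2N}\chi(m\geq1)A_{2N-1}(i,m-1)$, and $B_{2N+1}=B_{2N}+q^{2N+1}\chi(i\geq1)A_{2N}(i-1,m)$, with base case $\wTilde P_0(i,j,m,q)=\delta_{i,0}\delta_{j,0}\delta_{m,0}$, which the formulas meet because the $q^4$‑binomial forces $j=0$ (resp.\ $i=0$) and the $q^2$‑binomial forces $m=0$. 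I would then induct on the bound $M$.

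The even‑bound relation for $B$ (the two‑term third formula) is immediate: substituting $N\mapsto N-1$ into the second and first formulas shows that $B_{2N-1}(i,m)$ reproduces the first summand verbatim and $q^{2N}A_{2N-1}(i,m-1)$ reproduces the second summand verbatim, the sign being supplied by $(-1)^{(m-1)+i}=-(-1)^{m+i}$, so no identity is needed. The relation $A_{2N}=A_{2N-1}+q^{2N}B_{2N-1}(j,m-1)$ reduces, after applying the $q^2$‑Pascal rule $\genfrac{[}{]}{0pt}{}{N+j}{j+m}_{q^2}=\genfrac{[}{]}{0pt}{}{N+j-1}{j+m}_{q^2}+q^{2(N-m)}\genfrac{[}{]}{0pt}{}{N+j-1}{j+m-1}_{q^2}$ to peel off the $A_{2N-1}$ part, to the elementary floor–ceiling identity $\lfloor\frac{m+j}{2}\rfloor=\lceil\frac{m+j-1}{2}\rceil$, after which the two $q^4$‑binomials coincide and the $q$‑powers are checked to agree.

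The most involved step, and the one I expect to be the main obstacle, is the odd‑bound relation $B_{2N+1}=B_{2N}+q^{2N+1}\chi(i\geq1)A_{2N}(i-1,m)$. Here I would first use $q^2$‑Pascal in the form $\genfrac{[}{]}{0pt}{}{N+i}{i+m}_{q^2}-\genfrac{[}{]}{0pt}{}{N+i-1}{i+m}_{q^2}=q^{2(N-m)}\genfrac{[}{]}{0pt}{}{N+i-1}{i+m-1}_{q^2}$, which makes the common factor $\genfrac{[}{]}{0pt}{}{N+i-1}{i+m-1}_{q^2}$ appear on every surviving term; after verifying that the accompanying $q$‑powers all coincide, what remains is exactly the $q^4$‑Pascal recurrence relating $\genfrac{[}{]}{0pt}{}{\lceil(m+i)/2\rceil}{i}_{q^4}$, $\genfrac{[}{]}{0pt}{}{\lfloor(m+i-1)/2\rfloor}{i}_{q^4}$ and $\genfrac{[}{]}{0pt}{}{\lfloor(m+i-1)/2\rfloor}{i-1}_{q^4}$. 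Confirming that the residual exponent $1-2i+2m-(-1)^{m+i}$ matches the Pascal power $4(n-k)$ requires splitting into the cases $m+i$ even and $m+i$ odd—precisely the cases where ceiling and floor separate—and in each the powers agree on the nose, while the $i=0$ boundary is consistent because the extra $q^4$‑binomial $\genfrac{[}{]}{0pt}{}{\lfloor(m-1)/2\rfloor}{-1}_{q^4}$ vanishes in step with $\chi(i\geq1)$. Thus every recurrence collapses to $q^2$‑ and $q^4$‑Pascal together with parity bookkeeping, completing the induction.
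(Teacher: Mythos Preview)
The paper does not actually prove this proposition: it appears in the Outlook section with the remark that the authors ``intend to discuss these more general generating functions elsewhere,'' so there is no proof in the paper to compare against.

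Your recurrence-and-induction argument is the natural extension of the paper's own proof of Theorem~\ref{PgenFunc} via Lemma~\ref{RecRels}, and it is essentially correct. I checked all three verification steps. The derivation of the third formula from $B_{2N}=B_{2N-1}+q^{2N}A_{2N-1}(i,m-1)$ is indeed immediate term-by-term. The $A_{2N}$ step reduces, as you say, to the $q^2$-Pascal rule together with $\lfloor(m+j)/2\rfloor=\lceil(m+j-1)/2\rceil$. In the $B_{2N+1}$ step, after applying $q^2$-Pascal and cancelling the common $\genfrac{[}{]}{0pt}{}{N+i-1}{i+m-1}_{q^2}$, the residual exponent is $2m-2i+1-(-1)^{m+i}$, and this does match the $q^4$-Pascal power $4\bigl(\lceil(m+i)/2\rceil-i\bigr)$ in both parity cases ($2m-2i$ when $m+i$ is even, $2m-2i+2$ when $m+i$ is odd), so the step closes.

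One small caveat on the base case: your claim that ``the formulas meet'' $\wTilde P_0=\delta_{i,0}\delta_{j,0}\delta_{m,0}$ is not quite right for the third formula. At $N=0$, $i=0$, $m=0$ the first summand carries $\genfrac{[}{]}{0pt}{}{-1}{0}_{q^2}$ and the second carries $\genfrac{[}{]}{0pt}{}{-1}{-1}_{q^2}$, both of which vanish under the paper's convention, so the expression returns $0$ rather than $1$. This is a boundary artifact of the third formula as stated (it is only consistent with the first two for $N\geq 1$) rather than a flaw in your method. The clean fix is to anchor the $B$-family at $M=1$ by verifying the second formula directly at $N=0$---which is immediate, since only the empty partition and $(1)$ occur---and then your induction produces the third formula for all $N\geq 1$.
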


\noindent
We illustrate Proposition~\ref{StrongProp} in Table~\ref{Table8}

\begin{table}[htb] \caption{Various choices of $(N,i,j,m)$ in Proposition~\ref{StrongProp}}\begin{tabular}{lcc}
$\wTilde{P}_{N}(i,j,m,q)$   \hspace{1cm} Expression & & Relevant partitions.\\ [-1.2ex] \\
${\begin{aligned}
\wTilde{P}_{7}(0,1,2,q)&=q^9\genfrac{[}{]}{0pt}{}{4}{3}_{q^2}\genfrac{[}{]}{0pt}{}{1}{1}_{q^4}\\[-1.7ex]\\  &=q^9+q^{11}+q^{13}+q^{15}\end{aligned}}$ &: & $(4,3,2),\ (6,3,2),\ (6,5,2),\ (6,5,4).$ \\ [-1.5ex]\\
${\begin{aligned}\wTilde{P}_{7}(1,0,1,q) &= q^5\genfrac{[}{]}{0pt}{}{4}{2}_{q^2}\genfrac{[}{]}{0pt}{}{1}{1}_{q^4}\\[-1.7ex]\\  &=q^5+q^7+2q^9+q^{11}+q^{13}\end{aligned}}$ &: & $(3,2),\ (5,2),\ (5,4),\ (7,2),\ (7,4),\ (7,6).$ \\[-1.5ex]\\ 
$\begin{aligned}\wTilde{P}_{6}(2,0,1,q)&=q^6\genfrac{[}{]}{0pt}{}{4}{3}_{q^2}\genfrac{[}{]}{0pt}{}{2}{2}_{q^4}+q^{10}\genfrac{[}{]}{0pt}{}{4}{2}_{q^2}\genfrac{[}{]}{0pt}{}{1}{2}_{q^4}\\[-1.7ex]\\  &=q^6+q^8+q^{10}+q^{12}\end{aligned}$ &: & $(3,2,1),\ (5,2,1),\ (5,4,1),\ (5,4,3).$ \\ 
\end{tabular}\label{Table8}  
\end{table}

We comment that Proposition~\ref{StrongProp} provides significant refinement of the Savage-Sills generating functions in \cite{SavageSills}.

\section{Acknowledgment} 

Authors would like to thank Krishnaswami Alladi, George Andrews, Johann Cigler, Sinai Robins, and Li-Chien Shen for their kind interest and helpful comments. Special thanks to Frank Patane for careful reading of the manuscript and thoughtful suggestions.

\end{document}